\theoremstyle{plain}
 \newtheorem{Theorem}{Theorem}[section]
 \newtheorem{Corollary}[Theorem]{Corollary}
 \newtheorem{Lemma}[Theorem]{Lemma}
 \theoremstyle{definition}
 \newtheorem{Definition}[Theorem]{Definition}
 \theoremstyle{remark}
 \numberwithin{equation}{section}
\renewcommand*{\@fnsymbol}[1]{\@alph{#1}}
\title{\bf{Differentiability of the largest Lyapunov exponent for non-planar open billiards}}
\author {Amal Al Dowais 
 \thanks {Department of Mathematics, College of Science and Arts, Najran University, Najran, Saudi Arabia \newline{\textit{\indent ~~ Email address:}\href{mailto: amalduas@nu.edu.sa} {amalduas@nu.edu.sa}} }
  }
\affil{\footnotesize{Department of Mathematics and Statistics, School of Physics, Mathematics and Computing, University of Western Australia, Perth, WA 6009, Australia. {\textit{ Email address:} \href{mailto: amal.aldowais@research.uwa.edu.au}{amal.aldowais@research.uwa.edu.au}}}}
\date{}
\begin{document}

\maketitle






\begin{abstract}
This paper investigates the behaviour of open billiard systems in high-dimensional spaces. Specifically, we estimate the largest Lyapunov exponent, which quantifies the rate of divergence between nearby trajectories in a dynamical system. This exponent is shown to be continuously dependent and differentiable with respect to a small perturbation parameter. A theoretical analysis forms the basis of the investigation. Our findings contribute to the field of dynamical systems theory and have significant implications for the stability of open billiard systems, which are used to model physical phenomena. The results provide a deeper comprehension of the behaviour of open billiard systems in high-dimensional spaces and emphasise the importance of taking small perturbations into consideration when analysing these systems.
\end{abstract}

{\small \bf {keywords.}} {\small {Open billiards; Lyapunov exponents; Billiard deformation}}

{\small \bf{Mathematics Subject Classification (2010).}} {\small {37B10, 37D50, 37D20, 34D08}}

\section{Introduction}

In this paper, we continue our investigation of regularity properties of Lyapunov exponents from \cite{Amal} for open billiards in Euclidean spaces.
In \cite{Amal} we studied the case of open billiards in the plane; here we deal with the higher-dimensional case.
Open billiards are a type of dynamical system in which a particle travels at a constant velocity and collides with the boundaries of an unbounded domain.  Based on geometrical optics principles and satisfying Ikawa's no-eclipse condition ({\bf H}), the convex hull of a finite number of strictly convex compact obstacles does not intersect another obstacle. The study of open billiards has led to important insights into the behaviour of chaotic systems, as small changes in initial conditions can lead to vastly different outcomes. One way to quantify the behaviour of chaotic systems is through the use of Lyapunov exponents. Lyapunov exponents measure the rate at which neighbouring trajectories in a dynamical system diverge or converge, revealing the growth or decay of small perturbations. In the case of open billiards, the hyperbolic non-wandering set of the billiard map indicates the presence of positive and negative Lyapunov exponents.
Numerous studies have investigated Lyapunov exponents for billiards, (see \cite{Wo}, \cite{Ba},  \cite{Ch2}, \cite{Mar1}, \cite{Mar2}, \cite{ChM1}). This paper aims to estimate the largest Lyapunov exponent for higher-dimensional open billiards. The paper demonstrates that the largest Lyapunov exponent's dependency on the billiard deformation parameter $\alpha$ is continuous. Furthermore, the largest Lyapunov exponent is differentiable with respect to the deformation parameter $\alpha$. This work generalises previous results on the largest Lyapunov exponent for planar open billiards \cite{Amal}.

\bigskip

Our primary findings are as follows:

\bigskip

\noindent The largest Lyapunov exponent for open billiard in $\mathbb{R}^n$ is 

$$\lambda_1 = \lim_{m\to \infty}  \frac{1}{m} \sum_{i=1}^{m} \log {\Big(1+d_i (0) \ell_i (0)\Big)},$$

\noindent where $d_i$ is the distance between two reflection points along trajectories, and $\ell_i$ is related to the curvature of the unstable manifold, is defined in (\ref{L}). 

\bigskip

To comprehend the following theorems, an understanding of non-planar billiard deformations and their concepts is required. A small deformation parameter $\alpha \in [0, b], b \in \mathbb{R}$ characterises billiard deformations, which include position, rotation, and obstacle reshaping. The initial boundary parameterisation and deformation parameter $\alpha$ are used to parameterise the obstacle borders. This was introduced in \cite{W1}. Billiard deformations and notations in the subsequent theorems are explained in Section \ref{defor}.
The study assumes that billiard deformation is a differentiable function for both parameters, providing the foundation for the theorems.

\begin{Theorem} 
Let $K(\alpha)$ be a $\mathcal{C}^{4, 1}$ billiard deformation in $\mathbb{R}^n, n \geq 3$. Let $\lambda_1(\alpha)$ be the largest Lyapunov exponent for $K(\alpha)$. Then the largest Lyapunov exponent is continuous as a function of $\alpha$.
\label{cont2.}
\end{Theorem}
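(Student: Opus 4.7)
The plan is to express $\lambda_1(\alpha)$ as an integral of an $\alpha$-dependent H\"older potential against a continuously varying invariant measure on a fixed symbolic model, and then deduce continuity from the continuity of both pieces.

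First I would invoke structural stability for the open billiard. Under Ikawa's no-eclipse condition (H) together with the $\mathcal{C}^{4,1}$ regularity of the deformation $K(\alpha)$, the billiard ball map $B_\alpha$ is uniformly hyperbolic on its compact non-wandering set $\mathcal{M}_0(\alpha)$, and for $\alpha \in [0,b]$ (after possibly shrinking $b$) the systems $(B_\alpha, \mathcal{M}_0(\alpha))$ are all conjugate, via a H\"older homeomorphism $H_\alpha$, to a single subshift of finite type $(\sigma, \Sigma_A)$ with $H_\alpha$ depending continuously on $\alpha$. Hence for each admissible itinerary $\xi$ the sequence of reflection points $q_i(\alpha)$ of the corresponding trajectory depends continuously on $\alpha$, with estimates uniform in $i$ thanks to compactness of $\Sigma_A$.

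Next I would check continuity of the potential $\varphi_\alpha(\xi) := \log\bigl(1 + d_1(\alpha)\ell_1(\alpha)\bigr)$ in $\alpha$. The distance $d_i(\alpha) = \|q_{i+1}(\alpha) - q_i(\alpha)\|$ is continuous in $\alpha$ and uniformly bounded above and below by (H); the curvature quantity $\ell_i(\alpha)$ defined in (\ref{L}) is built from the second fundamental forms of the obstacle boundaries at $q_i(\alpha)$, and the $\mathcal{C}^{4,1}$ dependence of these boundaries on $\alpha$ passes to continuity of $\ell_i(\alpha)$, with uniform positive upper and lower bounds. It follows that $\alpha \mapsto \varphi_\alpha$ is continuous in the uniform norm on $\Sigma_A$, and each $\varphi_\alpha$ is H\"older in $\xi$ with H\"older constants uniform in $\alpha$.

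Finally, by the Birkhoff ergodic theorem applied to the shift on $(\Sigma_A, \mu_\alpha)$, where $\mu_\alpha$ is the SRB (equilibrium) measure of $B_\alpha$ on $\mathcal{M}_0(\alpha)$ carried back to $\Sigma_A$ via $H_\alpha$, the limit defining $\lambda_1(\alpha)$ equals $\int \varphi_\alpha \, d\mu_\alpha$ for $\mu_\alpha$-a.e.\ $\xi$. Uniform convergence $\varphi_{\alpha_n} \to \varphi_{\alpha_0}$ combined with weak-$*$ continuity of $\alpha \mapsto \mu_\alpha$ then yields $\lambda_1(\alpha_n) \to \lambda_1(\alpha_0)$. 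I expect the main obstacle to be precisely this last step, the weak-$*$ continuity of the invariant measures under the deformation: it will require transfer operator estimates for the potential associated with the unstable Jacobian together with the spectral gap furnished by uniform hyperbolicity, essentially adapting the Bowen--Ruelle continuity of equilibrium states to the open, non-compact billiard setting, and using that the conjugacies $H_\alpha$ vary continuously.
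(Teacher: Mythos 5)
Your overall strategy --- code the billiard by a fixed subshift, write $\lambda_1(\alpha)$ as an ergodic average of the potential $\log(1+d\,\ell)$, and deduce continuity from continuity of the ingredients --- is in the same spirit as the paper, but there are two genuine gaps. The first concerns the measure. In the paper the ergodic measure $\mu$ is fixed once and for all on the symbol space $\Sigma$ (which is the same for every $\alpha$, since condition ({\bf H}) persists under the deformation), and Lemma \ref{alpha_tilde{k}} supplies a single itinerary $\xi$ that is Oseledets-typical simultaneously for a whole sequence $\alpha_p\to 0$ and for $\alpha=0$; the comparison is then between two Birkhoff sums over the \emph{same} symbolic point, so no continuity of invariant measures is needed at all. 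Your version instead attaches to each $\alpha$ its own SRB/equilibrium measure $\mu_\alpha$ and requires weak-$*$ continuity of $\alpha\mapsto\mu_\alpha$, which you explicitly leave unproved. That step is a substantial theorem in its own right (and delicate for open billiards, where the non-wandering set is a measure-zero Cantor set), it changes which Lyapunov exponent is being discussed, and as written your argument is incomplete precisely at the point you yourself identify as the main obstacle.

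The second gap concerns the potential. You assert that $\ell_i(\alpha)$ is ``built from the second fundamental forms of the obstacle boundaries at $q_i(\alpha)$'' and hence inherits continuity directly from the $\mathcal{C}^{4,1}$ regularity of $\partial K(\alpha)$. This is not accurate: by (\ref{L}) and (\ref{B1}), $\ell_i$ involves the curvature operator $\mathcal{K}_i$ of the propagated unstable front, which is determined by the infinite backward recursion $\mathcal{K}_{j}=U_j^{-1}\big(\mathcal{K}_{j-1}^{-1}+d_{j-1}I\big)^{-1}U_j+2\Theta_j$ over the entire past itinerary, not by local data at $q_i$ alone. Uniform-in-$i$ control (the paper in fact needs uniform Lipschitz bounds, obtained from the Mean Value Theorem together with $\sup_{j,\alpha}|\partial_\alpha d_j|<\infty$ and $\sup_{j,\alpha}|\partial_\alpha\ell_j|<\infty$) requires showing that this recursion is uniformly contracting, so that $\partial_\alpha\mathcal{K}_j$ stays bounded independently of $j$; this is the content of Corollaries \ref{DKK2} and \ref{Dell} and is where most of the technical work of the paper lies. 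Without that contraction argument, the claimed uniform continuity of your potential $\varphi_\alpha$ is unsupported.
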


\begin{Theorem}
Let $K(\alpha)$ be a $\mathcal{C}^{5,2}$ billiard deformation in $\mathbb{R}^n, n \geq 3$. Let $\lambda_1(\alpha)$ be the largest Lyapunov exponent for $K(\alpha)$. Then $\lambda_1 (\alpha)$ is $\mathcal{C}^1$ with respect to $\alpha$.

\label{diff2.}

\end{Theorem}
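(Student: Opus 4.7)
The plan is to reduce the differentiability of $\lambda_1(\alpha)$ to a uniform convergence statement for the derivatives of the finite partial sums
\begin{equation*}
F_m(\alpha) = \frac{1}{m}\sum_{i=1}^{m}\log\!\bigl(1+d_i(\alpha)\,\ell_i(\alpha)\bigr),
\end{equation*}
where $d_i(\alpha)$ is the inter-reflection distance and $\ell_i(\alpha)$ is the curvature quantity defined in (\ref{L}), both now viewed as functions of the deformation parameter. The strategy mirrors the planar case treated in \cite{Amal}, but one must carry through every estimate in the $(n-1)$-dimensional transverse geometry of the unstable manifold in $\mathbb{R}^n$.

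First I would fix $\alpha_0\in[0,b]$ and, using the parametrisation of the deformed obstacles introduced in Section~\ref{defor}, show that for each finite $i$ both $\alpha\mapsto d_i(\alpha)$ and $\alpha\mapsto \ell_i(\alpha)$ are of class $\mathcal{C}^1$ on a neighbourhood of $\alpha_0$. For $d_i$ this is immediate from the $\mathcal{C}^{5,2}$ assumption applied to the reflection points, which themselves depend $\mathcal{C}^1$ on $\alpha$ through the implicit billiard map. For $\ell_i(\alpha)$, one writes it via the recursive (continued-fraction) formula expressing the curvature operator after $i$ reflections in terms of the previous curvature operator, the second fundamental form of the current obstacle, and the travel distance; differentiating this recursion in $\alpha$ produces an auxiliary linear recursion whose coefficients are controlled by the $\mathcal{C}^{5,2}$ data. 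From this $\mathcal{C}^1$ regularity of each summand and the chain rule, $F_m\in \mathcal{C}^1$.

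The core estimate, and the main technical obstacle, is to show that $F_m'(\alpha)$ converges uniformly in $\alpha$ on $[0,b]$. This amounts to proving an exponential-contraction type bound of the form
\begin{equation*}
\Bigl|\partial_\alpha \log\!\bigl(1+d_i(\alpha)\,\ell_i(\alpha)\bigr)\Bigr|\leq C,\qquad \sum_{i\geq 1}\Bigl|\partial_\alpha \log\!\bigl(1+d_i(\alpha)\ell_i(\alpha)\bigr) - G_i(\alpha)\Bigr|<\infty
\end{equation*}
for some Birkhoff-averageable $G_i$. The exponential contraction of the unstable curvature operator $\ell_i$ along the billiard trajectory (a consequence of the strict convexity of the obstacles and Ikawa's no-eclipse condition (\textbf{H})) ensures that the dependence of $\ell_i(\alpha)$ on far-past reflection data decays geometrically; the same contraction, applied to the perturbation equation, transfers to $\partial_\alpha \ell_i$ and gives uniform bounds on the derivative in $i$ and $\alpha$. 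In the non-planar case this requires a matrix (operator) version of the contraction argument used in \cite{Amal}, and is where the additional $\mathcal{C}^{5,2}$ regularity (one derivative more than in Theorem~\ref{cont2.}) is consumed.

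Once the uniform Cauchy bound on $\{F_m'\}$ is in hand, combine it with the pointwise convergence $F_m(\alpha)\to \lambda_1(\alpha)$ established in Theorem~\ref{cont2.} and invoke the standard theorem on term-by-term differentiation of sequences of $\mathcal{C}^1$ functions: $\lambda_1$ is then differentiable with
\begin{equation*}
\lambda_1'(\alpha) = \lim_{m\to\infty} F_m'(\alpha),
\end{equation*}
and continuity of $\lambda_1'$ on $[0,b]$ follows because the limit is uniform and each $F_m'$ is continuous in $\alpha$ (itself a consequence of the continuous dependence of $\partial_\alpha d_i$ and $\partial_\alpha\ell_i$ on $\alpha$, plus the uniform tail control). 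This yields $\lambda_1\in\mathcal{C}^1([0,b])$ as claimed. The chief difficulty, as indicated, is the quantitative control of $\partial_\alpha \ell_i$ uniformly in $i$; everything else is standard once that estimate is available.
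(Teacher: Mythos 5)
Your overall reduction --- differentiate the finite Birkhoff averages $\lambda_1^{(m)}(\alpha)=\frac1m\sum_{j=1}^m\log(1+d_j(\alpha)\ell_j(\alpha))$ and then justify an interchange of limits --- is the same starting point as the paper, and the uniform-in-$j$ bounds on $\partial_\alpha d_j$ and $\partial_\alpha\ell_j$ that you invoke are exactly what Corollaries \ref{Dd} and \ref{Dell} supply. The gap is in the interchange itself: you rest everything on \emph{uniform convergence of $F_m'(\alpha)$ on $[0,b]$}, and this is neither proved nor derivable from the ingredients you list. Uniform boundedness of $\partial_\alpha\log(1+d_i\ell_i)$ only gives $|F_m'|\le C$; Ces\`aro averages of a bounded sequence need not converge at all, and even for fixed $\alpha$ the convergence of $\frac1m\sum_{j\le m}\dot f_j(\alpha)$ is a separate Birkhoff-type statement for the derivative observable, with no rate, so uniformity in $\alpha$ is far from automatic. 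The exponential contraction of the curvature recursion controls the dependence of $\ell_i$ (and of $\partial_\alpha\ell_i$) on the distant past, i.e.\ it gives boundedness; it does not make the averages converge, and your ``summable comparison to Birkhoff-averageable $G_i$'' is never constructed. As written, the final appeal to term-by-term differentiation has no hypothesis to stand on.

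The paper avoids this entirely. Writing $f_j(\alpha)=\log(1+d_j(\alpha)\ell_j(\alpha))$, it uses the second-order Taylor expansion $f_j(\alpha_p)=f_j(0)+\alpha_p\dot f_j(0)+\frac{\alpha_p^2}{2}\ddot f_j(r_j(\alpha_p))$ together with the bound $|\ddot f_j|\le C_f^{(2)}$ uniform in $j$, $m$ and $\alpha$ (this is where the $\mathcal{C}^{5,2}$ hypothesis is consumed: it yields bounded \emph{second} $\alpha$-derivatives of $d_j$ and $\ell_j$, not a contraction estimate for first derivatives). This gives
\begin{equation*}
\Bigl|\tfrac{1}{\alpha_p}\bigl(\lambda_1^{(m)}(\alpha_p)-\lambda_1^{(m)}(0)\bigr)-F_m\Bigr|\le \tfrac{C_f^{(2)}}{2}\,\alpha_p,
\qquad F_m=\tfrac1m\sum_{j=1}^m\dot f_j(0),
\end{equation*}
uniformly in $m$. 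Since $|F_m|\le C_f^{(1)}$, one extracts a convergent subsequence $F_{m_h}\to F$, lets $h\to\infty$ and then $\alpha_p\to 0$ (along sequences handled by Lemma \ref{alpha_tilde{k}}), and uses uniqueness of the limit of the difference quotient to force all subsequential limits to coincide; hence $F_m\to F=\dot\lambda_1(0)$. If you want to repair your outline, replace the unproved uniform convergence of $F_m'$ by this uniform second-order remainder plus the subsequence/uniqueness argument; everything else in your sketch (the $\mathcal{C}^1$ dependence of $d_i$, $\ell_i$ on $\alpha$ via the curvature recursion) then aligns with Sections \ref{prop 1}--\ref{derivative} of the paper.
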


\noindent The proofs of these theorems in Sections \ref{ContL} and \ref{DiffL} respectively.

\section{Preliminaries}

\subsection{Open billiards}

Let $K_{1}, K_2, \dots, K_{z_0}$ be strictly convex, disjoint, and compact subsets of $\mathbb{R}^n, n \geq 3$ with smooth boundaries $\partial K_i$, and satisfying condition {\textbf {(H)}} of Ikawa \cite{I}: for any $ i \neq j \neq k$ the convex hull of $ K_{i} \cup K_{k}$ does not have any common points with $K_{j}$. Let $\Omega $ be the exterior of $K$ (i.e., $ \Omega = \overline{\mathbb{R}^n \backslash  K}$), where $K= \bigcup_i K_i$. Let $$\mathcal{M} =  \{x =(q,v) \in int \, \, (\Omega) \times \mathbb{S}^{n-1} \, \, or \,\, (q,v) \in  \partial \Omega \times \mathbb{S}^{n-1} : \langle \nu_{\partial{K}}(q),v \rangle \geq 0 \},$$

\noindent where $n_{\partial{K}}(q)$ is the outwards unit normal vector to $\partial K$ at $q$. For $t \in \mathbb{R}$ and $x \in \mathcal{M}$ , the billiard flow $\Phi_t$ is defined as $\Phi_t(x) = (q_t, v_t)$, where $q_t$ and $v_t$ represent the position and velocity of the $x$ at time $t$. Let $\Lambda$ represent the set of all points of $\mathcal{M}$ that have bounded trajectories.
Let 
$$ M = \{x=(q,v) \in (\partial K \times \mathbb{S}^{n-1}) \cap \mathcal{M} : \langle \nu(q),v \rangle \geq 0 \}.$$
 
 \noindent Let $t_j \in\mathbb{R}$, where $j \in \mathbb{Z}$, represent the time of $j$-th reflection. The {\it billiard map} $B : M \to M$ is defined as $B(q_0, v_0) = (q_1, v_1)$ where $q_1 = q_0 +t_1 v_0 \in \partial K_j$ and $v_1= v_0 - 2 \langle v_0, \nu(q_1) \rangle \, \nu(q_1)$. Clearly, $B$ is a smooth diffeomorphism on $M$. Define the {\it canonical projection map} $\pi : M \to \partial {K}$ by $\pi(q,v) = q$. Let $M_0 \subset M$ be the {\it non-wandering set} of the billiard map $B$, that is, $M_0 = \{x \in M : |t_{j}(x)| < \infty , \,for \, all\, j \in \mathbb{Z}\}$. It is clear the $M_0$ is invariant under $B$. 
See \cite{Si1}, \cite{Si2}, \cite{Ch2}, \cite{ChM1}, \cite{PS}, for general information about billiard dynamical systems.

\subsection{Symbolic coding for open billiards}

\label{coding}

Each particular $x \in M_0$ can be coded by a {\it bi-infinite sequence} $$ \xi(x) =  (...,\xi_{-1},\xi_0,\xi_1,...) \in \{1,2,...,z_0\}^\mathbb{Z},$$ in which $ \xi_i \neq \xi_{i+1}$, for all $i \in \mathbb{Z}$, and $ \xi_j $ indicates the obstacle $K_{\xi_j}$ such that $\pi B^j(x) \in \partial K_{\xi_j}$. For example, if there are three obstacles $K_1,K_2, K_3$ and $K_4$ as above and a particular $q$ repeatedly hits $K_2,K_1, K_4, K_3,K_2, K_1, K_4, K_3$, then the bi-infinite sequence is $(...,2,1,4,3,2,1,4,3,...)$. Let $\Sigma $ be the {\it symbol space} which is defined as: $$\Sigma = \{ \xi = (...,\xi_{-1},\xi_0,\xi_1,...) \in \{1,2,...,z_0\}^\mathbb{Z} : \xi_i \neq \xi_{i+1} , \forall i \in \mathbb{Z} \}.$$ Define the {\it representation map} $ R: M_0 \to \Sigma$ by $R(x) = \xi(x)$. Let $\sigma: \Sigma \to  \Sigma $ be the {\it two-sided subshift} map 
defined by $\sigma(\xi_i) = \xi'_i$ where $\xi'_i = \xi_{i+1}$. 

\noindent 
It is known that the representation map $R: M_0 \to \Sigma$ is a homeomorphism (see e.g. \cite{PS}). See \cite{I}, \cite{LM}, \cite{Mor}, \cite{PS}, \cite{St1},  for topics related to symbolic dynamics for open billiards.

\subsection{Lyapunov exponents}


\noindent For the open billiard $B: M_0 \longrightarrow M_0$ in $\mathbb{R}^n$
we will use the coding $R: M_0 \longrightarrow \Sigma$ from Section \ref {coding}, which conjugates
$B$ with the shift map $\sigma : \Sigma \longrightarrow \Sigma$,
 to define Lyapunov exponents. It follows from the symbolic coding that there are ergodic $\sigma$-invariant measures $\mu$ on $\Sigma$. Let $\mu$ be an ergodic  $\sigma$-invariant probability measure on $\Sigma$.
The following is a consequence of Oseledets Multiplicative Ergodic Theorem (see e.g. \cite{V}, \cite{KS}):

\begin{Theorem} [A Consequence of Oseledets Multiplicative Ergodic Theorem]

There exist real numbers $\lambda_1 > \lambda_2 > \ldots > \lambda_k > 0 > -\lambda_k > \ldots > -\lambda_1$ 
and vector subspaces $E^u_j(x)$ and  $E^s_j(x)$ of $T_x(M)$, $1 \leq j \leq k \leq n$, $x \in M_0$,
depending measurably on $R(x) \in \Sigma$ such that:

\begin{enumerate} 

\item $E^u(x) = E^u_1(x)  \oplus \ldots \oplus E^u_k(x)$ and $E^s(x) = E^s_1(x)  \oplus \ldots \oplus E^s_k(x)$
for almost all $x \in M$;

\item $D_x B ( E^u_i(x)) = E^u_i(B(x))$ and $D_x B (E^s_i(x)) = E^s_i(B(x))$
for all $x\in M_0$ and all $i = 1, \ldots, k$, and

\item For almost all $x \in M_0$ there exists
$$\lim_{m\to\infty} \frac{1}{m} \log \|D_x B^m (w)\| = \lambda_j$$

\noindent whenever $w \in E^u_j(x) \oplus \ldots \oplus E^u_k(x)$ ($j \leq k$) however
$w \notin  E^u_{j+1}(x)  \oplus \ldots \oplus E^u_{k}(x)$.

\bigskip 

And $$\lim_{m\to\infty} \frac{1}{m} \log \|D_x B^m (w)\| = - \lambda_j$$

\noindent whenever $w \in E^s_j(x) \oplus \ldots \oplus E^s_k(x)$ ($j \leq k$) however
$w \notin  E^s_{j+1}(x)  \oplus \ldots \oplus E^s_{k}(x)$.

\end{enumerate} 
\label{OSE}
\end{Theorem}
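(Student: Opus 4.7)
The plan is to derive the statement as a direct application of the classical two-sided Oseledets Multiplicative Ergodic Theorem (MET) to the discrete system $(M_0, B, \mu_*)$, where $\mu_* = R_*^{-1}\mu$ is the pullback of $\mu$ to $M_0$ via the coding homeomorphism $R$. The symmetric form of the spectrum $\lambda_1 > \ldots > \lambda_k > 0 > -\lambda_k > \ldots > -\lambda_1$ will then be extracted from the symplectic structure of $B$ together with the uniform hyperbolicity of the non-wandering set for open billiards satisfying (H).

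First I would verify the MET integrability hypothesis $\log^{+}\|D_x B^{\pm 1}\| \in L^1(\mu_*)$. Because each $K_i$ is strictly convex, smooth and compact, and because condition (H) forces the collision angles along trajectories in $M_0$ to stay uniformly bounded away from $\pi/2$ (no grazing), the reflection Jacobian and its inverse are uniformly bounded on the compact invariant set $M_0$. Hence $\log^{+}\|D_x B^{\pm 1}\|$ is in $L^\infty(\mu_*) \subset L^1(\mu_*)$ and the two-sided MET applies, producing finitely many distinct exponents $\chi_1 > \chi_2 > \ldots > \chi_p$ with measurable $DB$-invariant Oseledets subspaces $V_j(x) \subset T_xM$.

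Next I would split this filtration into positive and negative parts. The uniform hyperbolicity of $B$ on $M_0$, classical for open billiards under (H) and recorded in the references cited earlier in the paper, yields a continuous $DB$-invariant splitting $T_xM = E^u(x) \oplus E^s(x)$ on which $DB$ expands, respectively contracts, at uniformly exponential rates. It follows that every Oseledets line with $\chi_j > 0$ sits inside $E^u(x)$ and every one with $\chi_j < 0$ sits inside $E^s(x)$; regrouping the $V_j$ accordingly produces the filtrations $E^u(x) = E^u_1(x) \oplus \ldots \oplus E^u_k(x)$ and $E^s(x) = E^s_1(x) \oplus \ldots \oplus E^s_k(x)$, along with the asymptotic growth formulas in (3). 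No zero exponent appears because the only neutral direction, that of the flow $\Phi_t$, has already been quotiented out in passing from the flow to the Poincaré map $B$.

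The last step is the symmetry $\chi_{p-j+1} = -\chi_j$. I would use that $B$ preserves the natural symplectic form on $M$ (standard for billiard maps), so $D_xB$ is symplectic at $\mu_*$-almost every $x$; for cocycles of symplectic matrices the Lyapunov spectrum is symmetric about zero, forcing $p = 2k$, $\dim E^u_j = \dim E^s_j$, and the pairing $\chi_j \leftrightarrow -\chi_j$. The main obstacle I anticipate is not the MET itself but matching the measurable Oseledets decomposition with the continuous hyperbolic splitting $E^u \oplus E^s$; this identification rests on strong stable/unstable cone invariance that is standard for dispersing billiards in the plane but has to be transcribed carefully to the higher-dimensional open setting used here, together with the symplectic pairing argument for the negative half of the spectrum.
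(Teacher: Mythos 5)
The paper does not actually prove this statement: it is quoted verbatim as ``a consequence of Oseledets Multiplicative Ergodic Theorem'' with pointers to Viana and Katok--Strelcyn, and no argument is given in the text. Your reconstruction is therefore not comparable to an in-paper proof, but it is the standard derivation and it is essentially sound: the pullback $\mu_* = R_*^{-1}\mu$ is an ergodic $B$-invariant measure because $R$ conjugates $B$ with $\sigma$; the integrability hypothesis holds in the strong form you describe, since condition (H) keeps collision angles on $M_0$ uniformly away from $\pi/2$ (the paper itself relies on this via the constant $\cos\phi_{\max}$ in (\ref{eta})), so $\|D_xB^{\pm 1}\|$ is bounded on the compact set $M_0$; the identification of the positive-exponent Oseledets spaces with subspaces of $E^u$ follows from the two-sided characterisation of the Oseledets splitting (a vector with a nonzero stable component cannot decay under backward iteration, so it cannot carry a positive two-sided exponent), which also rules out zero exponents; and the symplectic invariance of the billiard map gives the pairing $\lambda_j \leftrightarrow -\lambda_j$ and $\dim E^u_j = \dim E^s_j$. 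Two small points of care: your aside that no zero exponent appears ``because the flow direction has been quotiented out'' is not by itself a proof --- the correct reason is exactly the uniform hyperbolicity argument you give in the same paragraph; and the bound $k \leq n$ in the statement should really be $k \leq n-1$, since $\dim M = 2(n-1)$, though that is an issue with the statement as printed rather than with your argument.
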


\noindent Here "for almost all $x$" means "for almost all $x$" with respect to $\mu$. The numbers $\lambda_1, \ldots, \lambda_k$ are called {\it Lyapunov exponents}, while the invariant 
subspaces $E_j(x)$ are called {\it Oseledets subspaces}.

 
\subsection{Propagation of unstable manifolds for open billiards}


This part explains the relationship between unstable manifolds for the billiard ball map and the billiard flow in $\mathbb{R}^n$. Recall that $M_0$ represents the non-wandering set of the billiard ball map, while $\Lambda$ represents the non-wandering set of the billiard flow. For the billiard map, the unstable manifolds of size $\epsilon$ is 

$$W_\epsilon^u(x) = \{ y\in M : d (B^{-n}(x), B^{-n}(y)) \leq \epsilon \: \, \mbox{\rm for all }
\: n \in \mathbb{N} \; , \: d (B^{-n}(x),B^{-n}(y)) \to_{n\to \infty} 0\: \}.$$

\noindent Similarly, for $x = (q,v) \in \Lambda$ the unstable manifolds  $\widetilde{W}_\epsilon^u(x) $ for the billiard ball flow is 
\[\widetilde{W}_\epsilon^u(x) = \{ y\in \mathcal{M}  : d (\Phi_t(x),\Phi_t(y)) \leq \epsilon \: \, \mbox{\rm for all }
\: t \leq 0 \; , \: d (\Phi_t(x),\Phi_t(y)) \to_{t\to -\infty} 0\: \}. \]

\noindent It is known that the unstable manifolds for the billiard ball map and the billiard flow naturally are related. This correspondence can be described as follows geometrically. Given a point $x_0 = (q_0,v_0)$ in the non-wandering set $M_0$, and a small number $0<t^{(1)} < t_1$, let $y_0 = (q_0+t^{(1)}v_0,v_0) $. Then, there exists a one-to-one correspondence map $\tilde{\varphi}_1$between the unstable manifold $W^u(x_0)$ and the unstable manifold $\widetilde{W}^u(y_0)$. 
In addition, it follows from Sinai \cite{Si1}, \cite{Si2}, that the unstable manifold $\widetilde{W}^u(y_0)$ has the form $$\widetilde{W}^u(y_0) = \{ (p,n_{Y_0}(p)) : p\in Y_0\},$$

\noindent where $Y_0$ is a smooth $(n-1)$-dimensional hypersurface in $\mathbb{R}^n$ containing the point $y_0$ and is strictly convex with respect to the unit normal field $n_{Y_1}(p)$. 

Likewise, for all $j =1, 2, \ldots, m$, there exists  one-to-one correspondence $\tilde{\varphi}_j$ between the unstable manifolds $W^u(x_j)$ and $\widetilde{W}^u(y_j)$, where $B^j (x_0) = x_j = (q_j , v_j)$ and  $y_j = (q_j + t^{(j)} v_j , v_j)$, for a small positive $t_{j-1} < t^{(j)} < t_j - t _{j-1}$. Moreover, $\widetilde{W}^u(y_j)$ takes the form $ \tilde{Y_j}= \{(p_j, n_{Y_j}(p_j)) ; j \in {Y_j}\}$, where  $Y_j$ is also a smooth $(n-1)$-dimensional hypersurface in $\mathbb{R}^n$ containing the point $y_j$ and is strictly convex with respect to the unit normal field $n_{Y_j}(p_j)$.  



\bigskip

The following are the commutative diagrams involving the unstable manifolds for open billiard maps and for the billiard flows, as shown above:

$$\def\normalbaselines{\baselineskip20pt\lineskip3pt \lineskiplimit3pt}
\def\mapright#1{\smash{\mathop{\longrightarrow}\limits^{#1}}}
\def\mapdown#1{\Big\downarrow\rlap{$\vcenter{\hbox{$\scriptstyle#1$}}$}}
\begin{matrix}
W^u(x_0)  &\mapright{B(x_0)}& W^u(x_1) & \mapright{B^2(x_0)}& W^u(x_2)  & \ldots \ldots  & W^u(x_{m-1})  &\mapright{B^m(x_0)}& W^u(x_m) \cr 
\mapdown{\tilde{\varphi}_0}& & \mapdown{\tilde{\varphi}_1} & &\mapdown{\tilde{\varphi}_2} & & \mapdown{\tilde{\varphi}_{m-1}} & & \mapdown{\tilde{\varphi}_m} 
\cr \widetilde{W}^u(y_0) &\mapright{\Phi_{\tau_1}(y_0)} & \widetilde{W}^u(y_1) &\mapright{\Phi_{\tau_2}(y_0)} & \widetilde{W}^u(y_{2})  & \ldots \ldots & \widetilde{W}^u(y_{m-1})  &\mapright{\Phi_{\tau_m}(y_0)}& \widetilde{W}^u(y_m)
\end{matrix}
$$ 

\bigskip 

\noindent where ${\tau_j} = t_j + t^{(j+1)}$. Similarly, the following are the commutative diagrams involving the corresponding tangent spaces of unstable manifolds under the derivative of the billiard ball maps and the derivative of the billiard flows:

$$\def\normalbaselines{\baselineskip20pt\lineskip3pt \lineskiplimit3pt}
\def\mapright#1{\smash{\mathop{\longrightarrow}\limits^{#1}}}
\def\mapdown#1{\Big\downarrow\rlap{$\vcenter{\hbox{$\scriptstyle#1$}}$}}
\begin{matrix}
E^u(x_0)  &\mapright{D B}& E^u(x_1) & \mapright{D B^2}& E^u(x_2)  & \ldots \ldots  & E^u(x_{m-1})  &\mapright{D B^m}& E^u(x_m) \cr 
\mapdown{D\tilde{\varphi}_0}& & \mapdown{D\tilde{\varphi}_1} & &\mapdown{D\tilde{\varphi}_2} & & \mapdown{D\tilde{\varphi}_{m-1}} & & \mapdown{D \tilde{\varphi}_m} 
\cr \widetilde{E}^u(y_0) &\mapright{D \Phi_{\tau_1}} & \widetilde{E}^u(y_1) &\mapright{D \Phi_{\tau_2}} & \widetilde{E}^u(y_{2})  & \ldots \ldots & \widetilde{E}^u(y_{m-1})  &\mapright{D \Phi_{\tau_m} }& \widetilde{E}^u(y_m)
\end{matrix}
$$ 

\bigskip

 \noindent where $ E^u(x_j) = T_{x_j}(W^u(x_j))$ and $\widetilde{E}^u(y_j) = T_{y_j}(\widetilde{W}^u(y_j))$, and $\widetilde{E}^u(y_j) \perp v_j$.  Since the derivatives $D\tilde{\varphi}_{j-1}$ and $D\tilde{\varphi}_j$, for all $j$,  are uniformly bounded \cite{Si1}, then there exist global constants  $C > c >0$ such that

\begin{equation} 
 c \|D\Phi_{\tau_m} \| \leq \| DB^m \| \leq C \| D\Phi_{\tau_m} \|. 
 \label{DBB}
 \end{equation} 
 
 \noindent This will be used to calculate the largest Lyapunov exponent $\lambda_1$ for the open billiard map in Section \ref{0L}.

\bigskip



Based on the prior discussion, we will now apply the concept of unstable manifold propagation to write the main theorem, which involves the propagation of an appropriate convex curve on a convex hypersurface. This theorem was proved in reference \cite{St3} and will be utilised to calculate the largest Lyapunov exponent, $\lambda_1$.




 
 \bigskip

Let $x =(q, v) \in M_0$ and let $W^u_\epsilon (x_0)$ be the local unstable manifold for $x_0$ for sufficiently small $\epsilon > 0$. Then $W^u_\epsilon (x_0) = \{(x, n_X(x)) : x \in X \}$, where
$X$ is a convex curve on a smooth hypersurface $\widetilde{X} $ containing $q$ such that  $\widetilde{X} $ is strictly convex with respect to the unit normal field $n_{ \widetilde{X} }$. This follows from (cf. \cite{Si1},\cite{Si2}), see also \cite{St3}.
Let $\mathcal{K}_x : T_q \widetilde{X} \to T_q \widetilde{X}$ be the curvature operator (second fundamental form). Since $\widetilde{X} $ is strictly convex, then the curvature $\mathcal{K}_x$ is positive definite with respect to the unit normal field $n_{ \widetilde{X} }$. 
 
 \bigskip

Let $X$ be parametrized by $q(s), s \in [0, a]$, such that $q(0) = q_0 + r v_0$ for a small $ r>0$, and by the unit normal field $n_X(q(s))$. Let $q_j(s), j = 1, 2, ..., m$ be the $j$-th reflection points of the forward billiard trajectory $\gamma(s)$ generated by $x(s) = (q(s), n_X(q(s))$. We assume that $a > 0$ is sufficiently small so that the $j$-th reflection points $q_j(s)$ belong to the same boundary component $\partial{K}_{\xi_j}$ for every $s\in [0,a]$. Let $0 < t_1(x(s)) < ... < t_m(x(s))$ be the times of the reflections of the ray $\gamma(s)$ at $\partial K$.
 Let $\kappa_j(s)$ be the curvature of $\partial K$ at $q_j(s)$ and $\phi_j(s)$ be the collision angle between the unit normal $\nu_j(s)$ to $\partial K$ and the reflection ray of $\gamma(s)$ at $q_j(s)$. Also, let $d_j(s)$ be the distance between two reflection points i.e. $d_j(s) =  \| q_{j+1}(s) - q_j(s)\|$, $j=0,1,\dots,m$.

\bigskip
 
\noindent Given a large $m\geq 1$, let $t_{m}(x(s)) < t < t_{m+1}(x(s))$. 
Set $\hat{X} = \{ (q(s), n_X(s)) : s\in [0,a] \}$, and $\Phi_{t} (\hat{X}) = \hat{X}_{t}$. Let $\pi (\Phi_{t} (x(s))) = p(s)$. Then $p(s), s\in [0, a]$, is a parameterization of the $\mathcal{C}^3 $ curve $X_{t} = \pi (\Phi_{t} (\hat{X})$. 

Let $k_j(s)$ be the normal curvature of $X_{t_j(s)} = \lim_{t \searrow t_{j}(s)} X_t$ at $q_j(s)$ in the direction $\hat{w}_j(s)$ $(\| \hat{w}_j(s) \| =1)$ of $\lim_{t \searrow t_j(s)} (d/ds')w_t(s'))_{|s'=s}$, where $\hat{w}_j(s) = \dot{q}_t(s) / \| \dot{q}_t(s) \|$.  For $j\geq 0$ let $$ \mathcal{K}_j (s) : T_{q_j(s)} (X_{t_j(s)}) \to T_{q_j(s)} (X_{t_j(s)}) $$ be the curvature operator of $X_{t_j(s)}$ at $q_j(s)$, and define $l_j(s) >0$ by 

\begin{equation}
\big (1+d_j(s) \ell_j(s) \big)^2 = 1 + 2d_j(s) k_j(s) + (d_j(s))^2 \| \mathcal{K}_j(s) \hat{w}_j(s) \|^2.
\label{L}
\end{equation}

\bigskip

Set
\begin{equation}
\delta_j(s) = \frac{1}{ 1 + d_j(s) \ell_j(s)} \:\:\:\: , \:\:\: 1\leq j \leq m\;.
\end{equation}

\noindent
\begin{Theorem} {\em{$\cite{St3}$}}
For all $s \in [0,a]$  we have

\begin{equation}
\| \dot{q}(s)\| = \| \dot{p}(s)\| \delta_{1}(s)\delta_{2}(s) \ldots \delta_m(s)\;.
\label{for.} 
\end{equation}

\label{ST theorem}

\end{Theorem}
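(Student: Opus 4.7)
The plan is to prove the product formula by establishing a one-step identity relating $\|\dot q_t(s)\|$ at consecutive reflection instants, and then telescoping. Two ingredients are needed: (i) during the free-flight segment following the $j$-th reflection, $\|\dot q_t(s)\|$ grows by the factor $(1 + d_j(s)\ell_j(s))$; and (ii) this norm is continuous across each reflection.

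For (i), on the segment $t_j(s) < t < t_{j+1}(s)$ parametrise points of the wave front by $q_t(s') = q_j(s') + (t - t_j(s'))\,\nu_j^+(s')$, where $\nu_j^+$ is the post-reflection unit velocity at $q_j$. Differentiating in $s'$ at $s'=s$ and letting $t\searrow t_j(s)$ identifies $\dot q_{t_j^+}(s) = \dot q_j(s) - \dot t_j(s)\,\nu_j^+(s)$ as the tangent to the post-reflection wave front at $q_j(s)$ (perpendicular to $\nu_j^+(s)$). Using $\partial_s \nu_j^+(s) = \mathcal{K}_j(s)\,\dot q_{t_j^+}(s)$, which is the definition of the shape operator of $\widetilde{X}_{t_j^+}$, one obtains
\begin{equation*}
\dot q_t(s) = \bigl(I + (t - t_j(s))\,\mathcal{K}_j(s)\bigr)\,\dot q_{t_j^+}(s).
\end{equation*}
Taking squared norms, with $\hat w_j(s) = \dot q_{t_j^+}(s)/\|\dot q_{t_j^+}(s)\|$ and $k_j(s) = \langle \hat w_j(s), \mathcal{K}_j(s)\,\hat w_j(s)\rangle$, and evaluating at $t - t_j(s) = d_j(s)$, the expression reproduces the right-hand side of (\ref{L}) exactly, giving $\|\dot q_{t_{j+1}^-}(s)\| = \|\dot q_{t_j^+}(s)\|\,(1 + d_j(s)\,\ell_j(s))$.

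For (ii), the chain-rule identity $\dot q_{t_j^\pm}(s) = \dot q_j(s) - v_j^\pm(s)\,\dot t_j(s)$ (obtained by computing $\partial_s q_{t_j(s)}(s)$ from each side), combined with the reflection law $v_j^+ = v_j^- - 2\langle v_j^-,\nu(q_j)\rangle\,\nu(q_j)$ and $\dot q_j(s) \perp \nu(q_j(s))$ (since $q_j(s)\in\partial K_{\xi_j}$), gives $\langle \dot q_j, v_j^+\rangle = \langle \dot q_j, v_j^-\rangle$. Substituting this into $\|\dot q_{t_j^\pm}\|^2 = \|\dot q_j\|^2 - 2\dot t_j\langle\dot q_j, v_j^\pm\rangle + \dot t_j^2$ shows that the right-hand side is independent of the $\pm$ sign, so $\|\dot q_{t_j^+}(s)\| = \|\dot q_{t_j^-}(s)\|$: the wave-front tangent norm is preserved across the reflection, even though the vector itself is not. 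Telescoping (i) and (ii) from $t=0$ to the final time produces $\|\dot p(s)\| = \|\dot q(s)\|\prod_j (1 + d_j(s)\,\ell_j(s))$, and inverting with $\delta_j = 1/(1 + d_j(s)\,\ell_j(s))$ yields the stated identity.

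The main obstacle I anticipate is the identification $\partial_s \nu_j^+(s) = \mathcal{K}_j(s)\,\dot q_{t_j^+}(s)$: the $s$-derivative of the post-reflection unit velocity must coincide with the post-reflection wave-front shape operator applied to the wave-front tangent. This is morally just the definition of the shape operator, since $\nu_j^+$ is the unit normal of $\widetilde{X}_{t_j^+}$ and $\dot q_{t_j^+}$ is tangent to it in the $s$-direction, but tracking the geometry across the reflection --- where the pre-reflection wave-front curvature, the boundary curvature $\kappa_j(s)$, and the collision angle $\phi_j(s)$ combine into $\mathcal{K}_j(s)$ via the classical mirror formula of Sinai --- is the technical heart of the argument. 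Once this identification is secured, the rest reduces to the telescoping computation above.
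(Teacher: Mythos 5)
The paper does not actually prove this theorem: it is quoted from \cite{St3} and used as a black box, so there is no in-paper argument to compare against. Your proof is correct and is essentially the standard (and the cited reference's) argument: on each free-flight segment the Weingarten equation for the convex front gives $\dot q_t(s) = \bigl(I + (t - t_j(s))\mathcal{K}_j(s)\bigr)\dot q_{t_j^+}(s)$, whose norm reproduces the right-hand side of (\ref{L}) exactly; the norm is preserved at a reflection because $\dot q_{t_j^{\pm}}(s) = \dot q_j(s) - \dot t_j(s)v_j^{\pm}(s)$ with $\langle \dot q_j,\nu_j\rangle = 0$ forcing $\langle \dot q_j, v_j^+\rangle = \langle \dot q_j, v_j^-\rangle$; and the factors telescope. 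The one point to tidy is endpoint bookkeeping: your telescoping naturally produces a factor for the initial segment from $q(s)$ to $q_1(s)$ (governed by $\mathcal{K}_0$, the curvature of $X$ itself) and a factor $\bigl(1 + (t - t_m(s))\ell_m(s)\bigr)$ for the final partial segment ending at $p(s)$, whereas the displayed product runs over $j = 1,\dots,m$ with the full inter-reflection distances $d_j$; this is a convention mismatch in the statement as transcribed here rather than a gap in your argument, and it is immaterial for the $\frac{1}{m}\sum_{j}$ limit in which the theorem is applied.
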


\label{propg}

\subsection{Curvature of unstable manifolds}

\label{unstable}

Following \cite{Si1}, \cite{Si2}, \cite{Ch1} ( see also \cite{BCST}), here we express  the curvature operator $\mathcal{K}_j$ of the convex front  $X_j$ at $q_j(s)$ using certain related objects:

\bigskip

• $\widetilde{X}^{-}_j$ represents the convex front passing $q_j(s)$ before collision, that is $\widetilde{X}^{-}_j = \lim_{\tau \nearrow t_j} \widetilde{X}_\tau$, where $ t_{j-1} < \tau < t_j$.  

• $\widetilde{X}^{+}_j$ represents the convex front passing $q_j(s)$ after collision, that is $\widetilde{X}^{+}_j = \lim_{\tilde{\tau} \searrow t_j} \widetilde{X}_{\tilde{\tau}}$, where $ t_{j} < \tilde{\tau} < t_{j+1}$. We write $\widetilde{X}_j$ to indicate to $\widetilde{X}^{+}_j$.

• $\mathcal{J}^{-}$ is the hyperplane of $\widetilde{X}^{-}_j$ at $q_j(s)$, i.e. $\mathcal{J}^{-} = T_{q_j} (\widetilde{X}^{-}_j)$, which is perpendicular to $v_{j-1}$.

• $\mathcal{J}^{+}$ is the hyperplane of $\widetilde{X}^{+}_j$ at $q_j(s)$, i.e. $\mathcal{J}^{+} = T_{q_j} (\widetilde{X}^{+}_j)$, which is perpendicular to $v_{j}$.

• $\mathcal{T}$ is the hyperplane of $\partial{K}$ at $q_j(s)$, i.e. $\mathcal{T} = T_{q_j} (\partial{K})$, which is perpendicular to $\nu_{j}(q_j(s))$.

\bigskip 

\noindent Now the curvature operator $\mathcal{K}_j : \mathcal{J}^{+} \to \mathcal{J}^{+}$ is given by

\begin{equation}
\mathcal{K}_{j} = U^{-1}_j \mathcal{K}^{-}_{j}  U_j+ 2 \cos\phi_{j} V^*_{j} N_{j} V_{j}, 
\label{B1}
\end{equation}

\noindent where 

• $\mathcal{K}^{-}_{j}$ is the curvature operator of $\widetilde{X}^{-}_j$, which defined as $\mathcal{K}^{-}_{j} = \mathcal{K}_{j-1}(I + d_{j-1}\mathcal{K}_{j-1})^{-1}$ or we can write $\mathcal{K}^{-}_{j} = (\mathcal{K}^{-1}_{j-1} + d_{j-1}I)^{-1}$.

• The unitary operator $U_j : \mathcal{J}^{+}_j \to \mathcal{J}^{-}_j$ is a projection parallel to $\nu_j(q_j(s))$, defined as; for all $w_j \in \mathcal{J}^{+}_j$ 

\begin{equation}
U_j w_j = w_j - \frac{\langle w_j, v_{j-1} \rangle } {\cos \phi_j } \nu_j . 
\label{U}
\end{equation}

• $U^{-1}_j : \mathcal{J}^{-}_j \to \mathcal{J}^{+}_j$ is a projection parallel to $\nu_j(q_j(s))$, defined as; for all $w_{j-1} \in \mathcal{J}^{-}_j$ 

\begin{equation}
U^{-1}_j w_{j-1} = w_{j-1} - \frac{\langle w_{j-1}, v_{j} \rangle } {\cos \phi_j } \nu_j . 
\label{U-1}
\end{equation}

• $V_j : \mathcal{J}^{+}_j \to \mathcal{T}_j$ is a projection parallel to $v_j$, defined as; for all $w_{j} \in \mathcal{J}^{+}_j$ 

\begin{equation}
V_j w_{j} = w_{j} - \frac{\langle w_{j}, \nu_{j} \rangle } {\cos \phi_j } v_j . 
\label{V}
\end{equation}

• $V^{*}_j : \mathcal{T}_j \to \mathcal{J}^{+}_j$ is a projection parallel to $\nu_j(q(s))$, defined as; for all $\tilde{u}_{j} \in \mathcal{T}_j$ 

\begin{equation}
V^*_j \tilde{u}_{j} = \tilde{u}_{j} - \frac{\langle \tilde{u}_{j}, v_{j} \rangle } {\cos \phi_j } \nu_j . 
\label{V*}
\end{equation}

• ${N}_{j}$ is the curvature operator s.f.f. of $\partial{K}$ at $q_j(s)$.

\bigskip

The operator $\| V^*NV \|$ is bounded by  $ \kappa_{\min} \leq  \| V^*NV \| \leq \frac { \kappa_{\max}}{\cos^2 \phi _j}$ where $\kappa_{\min}$ and $\kappa_{\max}$ are the minimum and maximum eigenvalues of the normal curvature $N$ and again $\phi_j(s)$ is the collision angle at $q_j(s)$ and $\phi_j(s) \in [0, \pi/2]$. Let $\mu_j(s) = \mu_j(x(s))$ and $\eta_j(s) = \eta_j(x(s))$ be the eigenvalues of the curvature operator $\mathcal{K}_j$. Then by using these and (\ref{B1}), we get 

\begin{equation}
\mu_{\min} \leq 2 \kappa_{\min} \leq \mu_{j+1}(s) \leq \eta_{j+1}(s) \leq \frac{ \eta_j(s)} {1+d_j(s) \eta_j(s) } + 2 \frac{ \kappa_{\max}} {\cos \phi_j(s) } \leq \frac{1}{d_{\min}} + 2 \frac { \kappa_{\max}} { \cos \phi_{\max} } \leq \eta_{\max}.
\label{eta}
\end{equation}




\section {Estimation of the largest Lyapunov exponent for non-planar open billiards}

\label{0L}

Here we want to estimate the largest Lyapunov exponent $\lambda_1$ for non-planar open billiards . 
We will use 
Oseledets multiplicative ergodic theorem \ref{OSE} and theorem \ref{ST theorem}.

Assume that $\mu$ is an ergodic $\sigma$-invariant measure on $\Sigma$,
and let $x_0 = (q_0,v_0) \in M_0$ correspond to a typical point in $\Sigma$  with respect to $\mu$ via the
representation map $R$. That is as in Theorem \ref{OSE}, we have
$$\lambda_1 = \lim_{m\to\infty} \frac{1}{m} \log \|D_{x_0}B^m(w)\| ,$$ with $ w \in E^u_1(x) \oplus E^u_2(x) \oplus \dots E^u_{k}(x) \backslash E^u_2(x) \oplus E^u_3(x) \oplus \dots E^u_{k}(x)$.

\bigskip

Let $w = \dot{q}(s)$ as in Section \ref{propg}. And then by using (\ref{DBB}), there exist some global constants $c_1 > c_2 > 0$, independent of $x_0$, $X$, $m$, etc. such that
$$c_2 \|\dot{p}(s)\| \leq \|D_{x_0}B^m\| \leq c_1 \|\dot{p}(s)\|$$
for all $s\in [0,a]$. So, by (\ref{for.}),
$$\frac{c_2 }{\delta_1(0) \delta_2(0) \ldots  \delta_{m}(0)} \leq \|D_{x_0}B^m\| 
\leq \frac{c_1}{\delta_1(0) \delta_2(0) \ldots \delta_{m}(0)}$$

for all $s \in [0,a]$. Using this for $s = 0$, taking logarithms and limits as $m \to \infty$, we obtain\\
$$- \lim_{m\to\infty} \frac{1}{m} \log \left( \delta_1(0) \delta_2(0) \ldots  \delta_{m}(0)\right) 
\leq \lim_{m\to\infty}\frac{1}{m} \|D_{x_0}B^m\| \leq
- \lim_{m\to\infty} \frac{1}{m} \log \left( \delta_1(0) \delta_2(0) \ldots  \delta_{m}(0)\right) .$$
Hence,


\begin{equation}
\begin{aligned}
\lambda_1 &=\lim_{m\to \infty} - \frac{1}{m} \sum_{i=1}^{m} \log \delta_{i}(0) =\lim_{m\to \infty}  \frac{1}{m} \sum_{i=1}^{m} \log {\Big(1+d_i (0) \ell_i (0)\Big)}.
\label{g}
\end{aligned}
\end{equation}

\bigskip

 We can use (\ref{L}) and the (\ref{eta}) to estimate the upper and lower of the largest Lyapunov exponent $\lambda_1$ as follows:

 \begin{equation}
\big (1+d_j(s) \ell_j(s) \big)^2 \leq  1 + 2 d_j(s) \eta_j(s) + (d_j(s))^2 (\eta_j(s))^2 = \big (1+d_j(s) \eta_j(s) \big)^2.
\label{ell}
\end{equation}

 \noindent This implies that 

\begin{equation}
1+d_j(s) \ell_j(s)  \leq  1+d_j(s) \eta_j(s) \leq 1 + d_{\max} \eta_{\max}.
\label{ell1}
\end{equation}

\noindent In the same way, we get

\begin{equation*}
1+d_j(s) \ell_j(s)  \geq  1+d_j(s) \mu_j(s) \geq 1 + d_{\min} \mu_{\min}.
\label{ell2}
\end{equation*}

\noindent Therefore, 
$$  \log(1 + d_{\min} \mu_{\min})   \leq  \lambda_1 \leq \log(1 + d_{\max} \eta_{\max}) .$$

\section{Billiard deformations in $\mathbb{R}^n$}
\label{defor}

Let $\alpha \in I = [0, b]$, for some $b \in \mathbb{R}^+$, be a deformation parameter and let $\partial K_j(\alpha)$ be parametrized counterclockwise by $\varphi_j(u^{(1)}_j, u_j^{(2)},..., u_j^{(n-1)}, \alpha)$. Let \newline $q_j = \varphi_j(u^{(1)}_j, u_j^{(2)},..., u_j^{(n-1)}, \alpha) $ be a point that lies on $\partial K_j(\alpha)$. Denote the perimeter of $\partial K_i(\alpha)$ by $L_j(\alpha)$, and let $R_j=\{ (u_j^{(1)}, u_j^{(2)}, ..., u_j^{(n-1)}: \alpha \in I, u^{(t)}_j \in [0,L_j(\alpha)]\}$.

\bigskip

\begin{Definition} {\em $\cite{W1}$} For any $\alpha \in I = [0, b]$, let $K(\alpha)$ be a subset of $\mathbb{R}^n, n \geq 3$. For integers $r \geq 2, r' \geq 1$, we call $K(\alpha) $ a $\mathcal {C}^{r,r'}$-{{billiard deformation}} if the following conditions hold for all $\alpha \in I$:

\begin{enumerate}

\item $K(\alpha) = \bigcup_{i=1}^{z_0} K_i(\alpha)$ satisfies the no-eclipse condition $({\bf{H}})$.  

\item Each $\partial K_i(\alpha)$ is a compact, strictly convex set with $\mathcal{C}^r$ boundary, and $K_i(\alpha) \bigcap K_j(\alpha) = \phi$ for $i \neq j$.


\item For each $i=1, 2,..., m$ and all $p\in \partial K_i(\alpha)$, there is a rectangle $R_p \subset \mathbb{R}^{n-1}$ and a $\mathcal{C}^{r,r'}$ function $\varphi_j : R_p \to \mathbb{R}^n$, which is an orthonormal parametrisation of $\partial K_i(\alpha)$ at $p$. 

\item For all integers $0 \leq l \leq r, 0\leq l' \leq r'$ (apart from $l=l'=0$), there exist constants $C_\varphi ^{(l,l')} $ depending only on the choice of the billiard deformation and the parametrisation $\varphi_j$, such that for all integers $j=1,2,3,...,z_0$, 
$$  \Big \| \frac { \partial ^{l'}}{ \partial \alpha^{l' }} \nabla _i^l \varphi_j \Big \| \leq {C}^{(l,l')}_ \varphi. $$

\end{enumerate}

\label{44}
\end{Definition}

We consider the open billiard deformation map, denoted as $B_\alpha$, defined on the non-wandering set $M_\alpha$ for $K(\alpha)$. As in Section \ref{coding}, we define $\Sigma_\alpha$ and $R_\alpha$ as the mapping from $M_\alpha$ to $\Sigma_\alpha$ such that $R_\alpha(x(\alpha)) = \xi (x(\alpha))$. Using the parameterization defined earlier, we can express the point corresponding to deformed billiard trajectories as $q_{\xi_j} (\alpha) = \varphi_{\xi_j} (u_j^{(1)}(\alpha), u_j^{(2)}(\alpha), ..., u_j^{(n-1)}(\alpha), \alpha) \in \partial{K}_{\xi_j}   (\alpha)$, where $u^{(t)}_{\xi_j}  (\alpha) \in [0, L_{\xi_j}  (\alpha)]$. We will write $q_j (\alpha) = \varphi_j (u_j^{(1)}(\alpha), u_j^{(2)}(\alpha), ..., u_j^{(n-1)}(\alpha), \alpha)$ for brevity.

\bigskip

\noindent It was shown in \cite{W1} that $u^{(t)}_j(\alpha) = u^{(t)}_{\xi_j}(\alpha)$, where $ t=1,2,...,n-1$, for a fixed $\xi \in \Sigma_\alpha$, is differentiable with respect to $\alpha$ and its higher derivative is bounded by a constant independent of $\alpha$ and $j$.

\begin{Theorem}{\em{\cite{W1}}}
Let $K(\alpha)$ be a $\mathcal{C}^{r,r'}$ billiard deformation with $r \geq 2, r' \geq 1$. Then for all $t = 1, 2, ..., n-1$, $u^{(t)}_j(\alpha)$ is $\mathcal{C}^{\min\{r-1,r'\}}$ with respect to $\alpha$, and there exist constants $C_u^{(s')} >0$ such that $$\Big \| \frac{\partial^{s'}{u^{(t)}_j(\alpha)}}{\partial \alpha^{s'}} \Big \|  \leq C^{(s')}_u.$$

\label{Du}
\end{Theorem}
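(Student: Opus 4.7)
The plan is to characterise each reflection point via a variational (Fermat) principle, apply the implicit function theorem in an appropriate Banach space of bi-infinite bounded sequences, and then exploit the hyperbolicity of the billiard map to obtain derivative bounds that are uniform in $j$. Concretely, for a fixed symbol sequence $\xi\in\Sigma_\alpha$, the broken-line billiard trajectory corresponding to $\xi$ is characterised by Fermat's principle: at each bounce, the gradient with respect to the surface coordinates of the path length vanishes. Writing $q_j(\alpha)=\varphi_j(u_j(\alpha),\alpha)$ with $u_j=(u_j^{(1)},\ldots,u_j^{(n-1)})$, the reflection condition at index $j$ becomes
$$F_j\bigl(u_{j-1},u_j,u_{j+1},\alpha\bigr):=\nabla_{u_j}\bigl(\|q_{j-1}-q_j\|+\|q_j-q_{j+1}\|\bigr)=0,$$
an infinite system of $(n-1)$-dimensional equations in the bi-infinite sequence $\{u_j\}_{j\in\mathbb{Z}}$ with parameter $\alpha$. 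Under Definition \ref{44}, the smoothness of the $\varphi_j$ in both the surface parameters and $\alpha$ transfers directly into $\mathcal{C}^{\min\{r-1,r'\}}$ smoothness of each $F_j$.

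Next, I would set up the implicit function theorem on the Banach space $X$ of bi-infinite sequences $\{u_j\}$ equipped with the supremum norm, viewing $F=(F_j)_{j\in\mathbb{Z}}$ as a $\mathcal{C}^{\min\{r-1,r'\}}$ map $X\times I\to X$. The partial derivative $D_u F$ at the unperturbed trajectory is a block-tridiagonal operator whose blocks are the second-order derivatives of the generating function at consecutive bounces. Strict convexity of each $\partial K_i(\alpha)$, positivity of the inter-reflection distances $d_j$, and Ikawa's no-eclipse condition {\bf H} force the diagonal blocks to be uniformly positive definite while the off-diagonal blocks have uniformly controlled norm; consequently the linearised system admits a Green's function that decays exponentially away from the source index.

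Once $D_u F$ is shown to be boundedly invertible on $X$, the implicit function theorem provides a unique $\mathcal{C}^{\min\{r-1,r'\}}$ solution $\alpha\mapsto\{u_j(\alpha)\}\in X$, giving the claimed regularity. For the explicit bound $\|\partial^{s'} u_j^{(t)}/\partial\alpha^{s'}\|\leq C_u^{(s')}$, I would induct on $s'$: differentiating $F_j\equiv 0$ in $\alpha$ yields a linear equation for $\partial^{s'}u_j/\partial\alpha^{s'}$ whose right-hand side is a polynomial in lower-order $\alpha$-derivatives of $u_\bullet$ together with the $\varphi$-derivatives already controlled by $C_\varphi^{(l,l')}$ from Definition \ref{44}(4). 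The $j$-uniform operator-norm bound on $(D_u F)^{-1}$ then converts these bounded inputs into the desired $j$-uniform output bound.

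The main obstacle is the bounded invertibility of the linearised operator on the space of bounded sequences. This is not automatic; one must extract quantitative hyperbolicity from strict convexity of the obstacles and condition {\bf H}, typically by constructing stable and unstable cone fields for the Jacobi equation along the broken geodesic and exhibiting exponential contraction in both time directions. Once that quantitative hyperbolicity is in place, both the existence of $\{u_j(\alpha)\}$ and the uniform estimates follow by standard Banach-space arguments.
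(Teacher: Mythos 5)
The paper contains no proof of this statement: Theorem \ref{Du} is imported verbatim from \cite{W1}, so there is no in-paper argument to compare yours against line by line. That said, your outline is essentially the method of \cite{W1} and its antecedents (Ikawa, Petkov--Stoyanov): characterise the trajectory with prescribed symbol sequence $\xi$ as a critical point of the length functional, apply the implicit function theorem to the sequence $\{u_j\}$ with parameter $\alpha$, and convert the uniform bounds of Definition \ref{44}(4) into $j$-uniform derivative bounds by inverting the linearisation. The regularity count is also right: $F_j$ involves one $u$-derivative of $\varphi_j$, so it is $\mathcal{C}^{r-1}$ in $u$ and $\mathcal{C}^{r'}$ in $\alpha$, giving $\mathcal{C}^{\min\{r-1,r'\}}$ dependence of $u_j$ on $\alpha$, as claimed.

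The one genuine soft spot is the invertibility step, and your own phrasing there is misleading even though you correctly flag it as the main obstacle. Uniformly positive definite diagonal blocks together with uniformly bounded off-diagonal blocks do \emph{not} give bounded invertibility of a block-tridiagonal operator; without a dominance or coercivity condition such an operator can fail to be injective. What is actually true, and what strict convexity plus condition {\bf H} deliver, is that the second differential of the \emph{total} length functional is uniformly positive definite as a quadratic form on the whole sequence (the off-diagonal cross terms are absorbed by the curvature contributions precisely because the obstacles are dispersing and the free path lengths $d_j$ are bounded below by {\bf H}). Once that coercivity is established, bounded invertibility on $\ell^2$ is immediate, and the passage to the supremum-norm space you want follows from the exponential off-diagonal decay of inverses of uniformly coercive banded operators, which also yields your Green's function decay. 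With that correction the induction on $s'$ for the higher-order bounds goes through as you describe, since each differentiation of $F_j\equiv 0$ produces the same linear operator applied to $\partial^{s'}u/\partial\alpha^{s'}$ with a right-hand side already controlled by $C_\varphi^{(l,l')}$ and lower-order constants $C_u^{(1)},\ldots,C_u^{(s'-1)}$.
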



\section{Propagation of unstable manifolds for non-planar billiard deformations}

\label{prop 1}

Here we want to re-describe the propagation of the unstable manifold mentioned in Section \ref{propg} for the billiard deformation $K(\alpha)$ with respect to the deformation parameter $\alpha \in I = [0, b]$. Let $K(\alpha)$ be a $\mathcal{C}^{r,r'}$ billiard deformation in $\mathbb{R}^n$, with $r\geq 4, r' \geq 1$. Let $\tilde{X}$ be a smooth hypersurface passing through $x_0 (\alpha)= (q_0(\alpha), v_0(\alpha)) \in M_\alpha$ and let $X$ be a $\mathcal{C}^3$ convex curve, with respect to the unit normal field $\nu_X$, on $\tilde{X}$. Let $X$ be parameterised by $q(s, \alpha)$ for all $s \in [0,a]$ and set $\hat{X}_\alpha = \{(q(s, \alpha), \nu_X(q(s,\alpha))), \forall s \in[0,a] \}$. For $j=1, 2, ..., m$, let $q_j(s, \alpha)$ be the reflection points generated by $x(s, \alpha) = (q(s, \alpha), \nu(q(s, \alpha)))$, $d_j(s, \alpha)$ be the distance between $q_{j}(s, \alpha)$ and $q_{j+1}(s, \alpha)$, and $\phi_j(s, \alpha)$ be the angle of reflection at $q_j(s, \alpha)$. 

\bigskip

Let $t_{m-1} (x(s, \alpha)) < t < t_{m}(x(s, \alpha))$ for some large $m \geq 2$. Let $\pi(\Phi_t(\hat{X}_\alpha)) = {X}_{\alpha_t}.$ Then, $X_{\alpha_t}$ is a $\mathcal{C}^3$ curve on $\tilde{X}_t$ parameterised by $u_t(s, \alpha)$. 

\bigskip 
 
Let $\mathcal{K}_j(s, \alpha)$ be the curvature operator of $X_{\alpha {t_j(s)}} = \lim_{t \searrow t_{j}(x(s))} X_{\alpha_t}$ at $q_j(s, \alpha)$ in the direction $\hat{w}_j(s, \alpha)$ of $\lim_{t \searrow t_j((x(s))} (d/ds')\hat{w}_t(s', \alpha'))_{|(s, \alpha)}$. Let $\mathcal{K}_0(s)$ be the curvature operator of $X$ at $q(s, \alpha)$, which is independent of $\alpha$. And, for $j\geq 1$ let 
$$ \mathcal{K}_j (s, \alpha) : T_{q_j(s, \alpha)} (X_{\alpha _{t_j(x(s))}}) \to T_{q_j(s, \alpha)} (X_{\alpha_{t_j(x(s,\alpha))}}) $$ 

\noindent be the curvature operator of $X_{t_j(x(s, \alpha))}$ at $q_j(s, \alpha)$. Define $\ell_j(s, \alpha)$ by

\begin{equation}
\big (1+d_j(s, \alpha) \ell_j(s, \alpha) \big)^2 = \| w_j(s, \alpha) + d_j(s, \alpha) \mathcal{K}_j(s, \alpha) \|^2 , \, \, \, \, \, \, {\textrm {for all}}\, \,  j= 0, 1,...,m .
\label{L 2}
\end{equation}

\noindent Then 

\begin{equation}
\big (1+d_j(s, \alpha) \ell_j(s, \alpha) \big)^2  = 1 + 2d_j(s, \alpha) k_j(s, \alpha) + (d_j(s, \alpha))^2 \| \mathcal{K}_j(s) \hat{w}_j(s) \|^2,
\label{L 3}
\end{equation}

\noindent where $k_j(s, \alpha)$ is the normal curvature of $X_{\alpha_j}(s, \alpha)$ at $q_j(s, \alpha)$ in the direction $\hat{w}_j(s, \alpha)$, which is given by $ k_j(s, \alpha) = k_j(\hat{w}_j(s, \alpha)) = \langle \mathcal{K}_j(s, \alpha) \hat{w}_j(s, \alpha),   \hat {w}_j(s, \alpha)\rangle$. 

Now, we want to re-write the curvature operator in (\ref{B1}) with respect to the billiard deformation parameter $\alpha$. So, we have for $j = 1, 2, ..., m$

\begin{equation}
\mathcal{K}_{j} (s, \alpha)= U^{-1}_j(s, \alpha)(\mathcal{K}^{-1}_{j-1} (s, \alpha)+ d_{j-1}(s, \alpha)I)^{-1} U_j(s, \alpha)+ 2 \cos\phi_j (s, \alpha)V^*_{j} (s, \alpha) N_{j} (s, \alpha) V_{j} (s, \alpha)
\label{K2}
\end{equation}

\bigskip

\noindent For brevity, we will write all previous characteristics as e.g. $\mathcal{K}(0, \alpha) = \mathcal{K}(\alpha)$, $d(0, \alpha)= d(\alpha)$, ..., in the case $s=0$.   We can write (\ref{K2}) as follows

\begin{equation}
\mathcal{K}_{j} (\alpha)= U^{-1}_j(\alpha)(\mathcal{K}^{-1}_{j-1} (\alpha)+ d_{j-1}(\alpha)I)^{-1} U_j(\alpha)+ 2 \Theta_j(\alpha),
\label{K3}
\end{equation}


\noindent where $ \Theta_j(\alpha) = {\cos\phi_j} {V}_{j}^* N_{j} {V}_{j}$.
 Note that all terms in the last formula are functions of  $\alpha$, and are defined as in Section \ref{unstable} with respect to $\alpha$.

\subsection{Estimates of the higher derivative of billiard characteristics in $\mathbb{R}^n$}

\label{derivative}


This section aims to demonstrate the differentiability of the billiard deformation characteristics in high dimensions with respect to $\alpha$. These characteristics are described in Section \ref{prop 1}. Furthermore, we establish that the derivatives of these characteristics are bounded by constants independent of deformation parameter $\alpha \in[0, b]$ and the number of reflections $j \in \mathbb{Z}^+$. In particular, we show that the first and second derivatives are bounded, which holds significant relevance for the subsequent Sections \ref{DiffL} and \ref{ContL}. The higher derivatives are bounded via induction. All corollaries, which are provided here, are based on {{Definition \ref{44}}} and Theorem {\ref {Du}}.


\bigskip





\begin{Corollary}
Let $K(\alpha)$ be a $\mathcal{C}^{r,r'}$ billiard deformation with $r \geq 2, r' \geq 1$, in $\mathbb{R}^n$. Let $q_j(\alpha)$ belong to $\partial K_{\xi_j}$. Then $q_j(\alpha)$ is $\mathcal{C}^{s'}$, where $s'=\min\{r-1,r'\}$, with respect to $\alpha$, and there exist constants $C_q^{(s')} >0$ such that $$\Big \| \odv[order=s']{q_j(\alpha)}{\alpha} \Big \|  \leq C^{(s')}_q.$$

\label{Dq}
\end{Corollary}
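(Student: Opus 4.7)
The plan is to view $q_j(\alpha)$ as a composition and apply the multivariable chain rule, then collect the resulting bounds from the two ingredients already supplied by the paper.

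First, I would use the parameterisation to write
\begin{equation*}
q_j(\alpha) \;=\; \varphi_{\xi_j}\bigl(u_j^{(1)}(\alpha),\, u_j^{(2)}(\alpha),\, \ldots,\, u_j^{(n-1)}(\alpha),\, \alpha\bigr),
\end{equation*}
so that $q_j$ is a composition of the map $\varphi_{\xi_j}$, which is $\mathcal{C}^{r,r'}$ by Definition \ref{44}(3), with the curve $\alpha\mapsto\bigl(u_j^{(1)}(\alpha),\ldots,u_j^{(n-1)}(\alpha),\alpha\bigr)$, whose first $n-1$ components are $\mathcal{C}^{\min\{r-1,r'\}}$ by Theorem \ref{Du} and whose last component is the identity (hence $\mathcal{C}^\infty$). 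Since $s'=\min\{r-1,r'\}\le r-1<r$ and $s'\le r'$, both the spatial and the $\alpha$-regularity of $\varphi_{\xi_j}$ accommodate up to $s'$ differentiations, so the composition is $\mathcal{C}^{s'}$.

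Next, for the bound, I would apply Faà di Bruno's formula (the multivariable chain rule for higher derivatives) to compute $\odv[order=s']{q_j(\alpha)}{\alpha}$. This expresses it as a finite sum, indexed by the partitions of $\{1,\ldots,s'\}$, of terms of the form
\begin{equation*}
\bigl(\partial^{|\beta|}\varphi_{\xi_j}\bigr)\bigl(u_j(\alpha),\alpha\bigr)\;\cdot\;\prod_{t,l}\left(\odv[order=l]{u_j^{(t)}(\alpha)}{\alpha}\right)^{\beta_{t,l}},
\end{equation*}
where each multi-index $\beta$ has total order $|\beta|\le s'$ in the $u$-variables and order $\le s'$ in $\alpha$, and the exponents satisfy $\sum_{t,l} l\,\beta_{t,l}\le s'$. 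The number of such terms depends only on $s'$ and $n$, not on $j$ or $\alpha$.

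To finish, I would bound each factor uniformly: Definition \ref{44}(4) gives $\|\partial^{|\beta|}\varphi_{\xi_j}\|\le C_\varphi^{(l,l')}$ for the relevant $(l,l')$ with $l\le r$ and $l'\le r'$ (all of which are satisfied since $|\beta|\le s'\le\min\{r-1,r'\}$), and Theorem \ref{Du} gives $\|\odv[order=l]{u_j^{(t)}}{\alpha}\|\le C_u^{(l)}$ for $l\le s'$. Multiplying and summing these bounds over the finitely many terms in Faà di Bruno's formula produces a constant $C_q^{(s')}$ that depends only on $s'$, $n$, and the constants $C_\varphi^{(l,l')}, C_u^{(l)}$ — in particular independent of both $\alpha\in[0,b]$ and the reflection index $j$.

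I do not expect a major obstacle here: the statement is essentially a chain-rule corollary of Definition \ref{44} and Theorem \ref{Du}. The only point requiring a little care is verifying that the orders of differentiation distributed to $\varphi_{\xi_j}$ in Faà di Bruno's expansion never exceed $r$ spatially nor $r'$ in $\alpha$, which is immediate from $s'=\min\{r-1,r'\}$; and observing that the $j$-independence of both $C_\varphi^{(l,l')}$ and $C_u^{(l)}$ transfers directly to the constant $C_q^{(s')}$.
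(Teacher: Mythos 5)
Your proposal is correct and follows essentially the same route as the paper: both view $q_j(\alpha)$ as the composition of $\varphi_{\xi_j}$ with $\bigl(u_j^{(1)}(\alpha),\ldots,u_j^{(n-1)}(\alpha),\alpha\bigr)$, differentiate by the chain rule, and bound every factor by the constants $C_\varphi^{(l,l')}$ from Definition \ref{44} and $C_u^{(l)}$ from Theorem \ref{Du}. The only difference is presentational — the paper computes the first and second derivatives explicitly and then invokes induction, whereas you package all orders at once via Fa\`a di Bruno's formula, which is just a systematic version of the same computation.
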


\begin{proof}
Let  $q_j (\alpha) = \varphi_j (u_j^{(1)}(\alpha), u_j^{(2)}(\alpha), ..., u_j^{(n-1)}(\alpha), \alpha) \in \partial K_{\xi_j}$. Then from {{Definition \ref{44}}} and Theorem {\ref {Du}}, $q_j (\alpha)$ is $\mathcal{C}^{\min\{r-1,r'\}}$ with respect to $\alpha$. For the first derivatives of $q_j(\alpha)$ we have 

\begin{equation*}
\begin{aligned}
\odv{q_j}{\alpha} &= \sum_{i=1}^{n-1} \pdv{ \varphi_{j}}{u_{j}^{(i)}} \pdv{ u_{j}^{(i)}}{ \alpha} + \pdv {\varphi_{j}}{ \alpha}.\\
\end{aligned} 
\end{equation*}

 \noindent From {{Definition \ref{44}}} and Theorem {\ref {Du}}, there exists $C_q^{(1)} > 0$ such that 

\begin{equation*}
\begin{aligned}\Big \| \odv{q_j}{\alpha} \Big \| & \leq (n-1) C_u^{(1)} + C_\varphi^{(0,1)} = C_q^{(1)}. \\
\end{aligned} 
\end{equation*}

\noindent For the second derivatives we have 

\begin{equation*}
\begin{aligned}
\odv[order=2]{q_j}{\alpha} &=  \sum_{i=1}^{n-1} \sum_{k=1}^{n-1} \pdv{ \varphi_{j}}{u_{j}^{(k)},  u_j^{(i)}} \pdv{ u_{j}^{(i)}} { \alpha} \pdv{ u_{j}^{(k)}}{ \alpha} + \sum_{i=1}^{n-1} \pdv{ \varphi_{j}}{u_{j}^{(i)}} \pdv[order=2]{ u_{j}^{(i)}} { \alpha}  + 2\sum_{i=1}^{n-1} \pdv{ \varphi_{j}}{u_{j}^{(i)}, \alpha} \pdv{ u_{j}^{(i)}}{ \alpha}+ \pdv[order=2]{\varphi_{j}}{ \alpha}. \\
\end{aligned} 
\end{equation*}

\noindent As before, there exists $C_q^{(2)} > 0$ such that 

\begin{equation*}
\begin{aligned}
\Big \| \odv[order=2]{q_j}{\alpha} \Big \| &\leq (n-1)^2 C_\varphi^{(2,0)} (C_u^{(1)})^2 + (n-1) C_u^{(2)} + 2 (n-1) C_\varphi^{(1,1)} C_u^{(1)} +C_\varphi^{(2,0)} = C_q^{(2)}. 
\end{aligned} 
\end{equation*}

\noindent This constant is independent of $j$ and $\alpha$. Continuing by induction, we can see that the $s'$-th derivative of $q_j(\alpha)$ is bounded by a constant that depends only on $s'$ and $n$. Thus, the statement is proved.  
\end{proof}


\begin{Corollary}
Let $K(\alpha)$ be a $\mathcal{C}^{r,r'}$ billiard deformation in $\mathbb{R}^n$ with $r \geq 2, r' \geq 1$. 
Then $d_j(\alpha)$ is $\mathcal{C}^{s}$, where $s=\min\{r-1,r'\}$, with respect to $\alpha$, and there exist constants $C_v^{(s)} >0$  depending only on $s,$ and $n$ such that $$\Big | \odv[order=s']{d_j(\alpha)}{\alpha} \Big |  \leq C^{(s')}_d.$$

\label{Dd}
\end{Corollary}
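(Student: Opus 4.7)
The plan is to reduce everything to Corollary \ref{Dq} via the identity $d_j(\alpha)^{2}=\langle q_{j+1}(\alpha)-q_j(\alpha),\,q_{j+1}(\alpha)-q_j(\alpha)\rangle$, and to handle the square root using a uniform positive lower bound on $d_j(\alpha)$. First I would establish a constant $d_{\min}>0$, independent of $j$ and $\alpha\in[0,b]$, such that $d_j(\alpha)\ge d_{\min}$. Since $\xi_j\neq\xi_{j+1}$, the points $q_j(\alpha)$ and $q_{j+1}(\alpha)$ lie on disjoint obstacles; continuity of the parametrisations $\varphi_i$ in $\alpha$, compactness of $[0,b]$, and the finiteness of the obstacle family imply that $\min_{i\neq k}\min_{\alpha\in[0,b]}\operatorname{dist}(K_i(\alpha),K_k(\alpha))$ is strictly positive, and any such value serves as $d_{\min}$. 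The $\mathcal{C}^{s}$ regularity of $d_j(\alpha)$ then follows by composition: $\alpha\mapsto d_j(\alpha)^{2}$ is polynomial in the coordinates of $q_{j+1}(\alpha)-q_j(\alpha)$, each of which is $\mathcal{C}^{s}$ by Corollary \ref{Dq}, and the square root is $\mathcal{C}^{\infty}$ on $[d_{\min}^{2},\infty)$.

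For the derivative bounds, I proceed by induction on the order $k\le s$. The base case $k=1$ is handled directly by Cauchy--Schwarz,
\begin{equation*}
\Big|\odv{d_j}{\alpha}\Big|=\Big|\frac{\langle q_{j+1}-q_j,\,q_{j+1}'-q_j'\rangle}{d_j}\Big|\le \|q_{j+1}'-q_j'\|\le 2C_q^{(1)},
\end{equation*}
so the lower bound $d_{\min}$ is not even required at this stage. For the inductive step, applying Leibniz to $d_j^{2}=\langle q_{j+1}-q_j,\,q_{j+1}-q_j\rangle$ yields
\begin{equation*}
2\,d_j\,\odv[order=k]{d_j}{\alpha}=\sum_{i=0}^{k}\binom{k}{i}\big\langle q_{j+1}^{(i)}-q_j^{(i)},\,q_{j+1}^{(k-i)}-q_j^{(k-i)}\big\rangle-\sum_{i=1}^{k-1}\binom{k}{i}\odv[order=i]{d_j}{\alpha}\odv[order=k-i]{d_j}{\alpha},
\end{equation*}
where $q_j^{(i)}$ abbreviates $\odv[order=i]{q_j}{\alpha}$. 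Dividing through by $2d_j\ge 2d_{\min}$ and invoking the inductive hypothesis together with Corollary \ref{Dq} to bound the right-hand side produces a constant $C_d^{(k)}$ depending only on $k$, $n$, the constants $C_q^{(i)}$ for $1\le i\le k$, and $d_{\min}$.

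The main obstacle is ensuring that every bounding constant is genuinely independent of $j$ and $\alpha$. This rests entirely on the two uniform estimates above: the $j$-independent lower bound $d_{\min}$, whose existence hinges on the disjointness of the obstacles together with the compactness of the parameter interval, and the $j$-independent derivative bounds from Corollary \ref{Dq}. Once these are in place the induction propagates cleanly, and the remaining combinatorial coefficients depend only on $k$ and $n$, as required by the statement.
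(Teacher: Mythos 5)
Your proposal is correct, and at its core it runs on the same fuel as the paper's proof: everything is reduced to the uniform bounds on the derivatives of $q_j(\alpha)$ from Corollary \ref{Dq}, and the higher derivatives are controlled by dividing by $d_j \ge d_{\min}$. The difference is in how the derivatives of the norm are organized. The paper differentiates $d_j = \|q_{j+1}-q_j\|$ directly: the first derivative is the same inner product $\langle (q_{j+1}-q_j)/d_j,\, q_{j+1}'-q_j'\rangle$ you obtain, but the second derivative is then computed by brute-force quotient/chain rule, producing terms with $\|\varphi_{j+1}-\varphi_j\|$ and $\|\varphi_{j+1}-\varphi_j\|^3$ in the denominator, after which the paper simply asserts ``continuing by induction.'' You instead differentiate the identity $d_j^2 = \langle q_{j+1}-q_j, q_{j+1}-q_j\rangle$ and solve the Leibniz expansion for the top-order derivative $2d_j\, d_j^{(k)}$; this keeps the right-hand side polynomial in quantities already bounded (derivatives of $q_j$ of order $\le k$ and derivatives of $d_j$ of order $<k$), so the induction closes cleanly and explicitly rather than by appeal to an increasingly messy explicit formula. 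Two further points in your favour: you note that the first-order bound needs no lower bound on $d_j$ at all (Cauchy--Schwarz with the unit vector $(q_{j+1}-q_j)/d_j$), and you actually justify the existence of the uniform constant $d_{\min}>0$ from disjointness of the obstacles, compactness of $[0,b]$, and continuity of the parametrisations --- a fact the paper uses throughout (it appears in its own bound for $C_d^{(2)}$) but never establishes. The trade-off is that the paper's version yields fully explicit constants $C_d^{(1)}, C_d^{(2)}$ in terms of $C_\varphi^{(l,l')}$ and $C_u^{(s')}$, whereas yours gives constants recursively; both are adequate for the statement, which only requires independence of $j$ and $\alpha$.
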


\begin{proof}

Let $d_j= \|q_{j+1}(\alpha) - q_{j}(\alpha)\|$, where $q_j(\alpha) = \varphi_j(u_j^{(1)}(\alpha), u_j^{(2)}(\alpha), ..., u_j^{(n-1)}(\alpha), \alpha) \in \partial K_{\xi_j}$. Then from Corollary \ref{Dq}, $d_j(\alpha)$ is $\mathcal{C}^{\min\{r-1,r'\}}$ and its derivative with respect to $\alpha$ is

\begin{equation*}
\begin{aligned}
\odv{d_j}{\alpha} & =  \, \Big < \frac {\varphi_{j+1} - \varphi_j } {\| \varphi_{j+1} - \varphi_j \|}  ,  \frac{\partial \varphi_{j+1}}{\partial \alpha} +  \sum_{i=1}^{n-1} \frac{\partial \varphi_{j+1}}{\partial u_{j+1}^{(i)}} \frac{ \partial u_{j+1}^{(i)}} { \partial \alpha} - \frac{\partial \varphi_{j}}{\partial \alpha} -  \sum_{i=1}^{n-1} \frac{\partial \varphi_{j}}{\partial u_{j}^{(i)}} \frac{ \partial u_{j}^{(i)}} { \partial \alpha} \Big >.\\
\end{aligned}
\end{equation*}

\noindent  By using the estimations in {{Definition \ref{44}}} and Theorem {\ref {Du}}, we get

\begin{equation*}
\begin{aligned}
 \Big | \odv{d_j}{\alpha} \Big | & \leq 2 C_\varphi ^{(0,1)} + 2 (n-1) C_u^{(1)} = C_d^{(1)},
\end{aligned}
\end{equation*}

\noindent where $ C_d^{(1)} >0$ is a constant depending only on $n$. And its second derivative is 

\begin{equation*}
\begin{aligned}
%
%
%
%
\odv[order=2]{d_j}{\alpha} &= \frac{\Big ( \frac{\partial \varphi_{j+1}}{\partial \alpha} +  \sum_{i=1}^{n-1} \frac{\partial \varphi_{j+1}}{\partial u_{j+1}^{(i)}} \frac{ \partial u_{j+1}^{(i)}} { \partial \alpha} - \frac{\partial \varphi_{j}}{\partial \alpha} -  \sum_{i=1}^{n-1} \frac{\partial \varphi_{j}}{\partial u_{j}^{(i)}} \frac{ \partial u_{j}^{(i)}} { \partial \alpha}  \Big )^2 } { \| \varphi_{j+1} - \varphi_j \|} \\ %
& \, \, \, - \frac{ (  \varphi_{j+1} - \varphi_j   )^2 \Big ( \frac{\partial \varphi_{j+1}}{\partial \alpha} +  \sum_{i=1}^{n-1} \frac{\partial \varphi_{j+1}}{\partial u_{j+1}^{(i)}} \frac{ \partial u_{j+1}^{(i)}} { \partial \alpha} - \frac{\partial \varphi_{j}}{\partial \alpha} -  \sum_{i=1}^{n-1} \frac{\partial \varphi_{j}}{\partial u_{j}^{(i)}} \frac{ \partial u_{j}^{(i)}} { \partial \alpha}  \Big )^2 } { \| \varphi_{j+1} - \varphi_j \|^3} \\
&  \, \, \, +  \frac {\varphi_{j+1} - \varphi_j } { \| \varphi_{j+1} - \varphi_j \| } \cdot \Biggl (\frac{ \partial^2 \varphi_{j+1}} {\partial \alpha^2 } + \sum_{k=1}^{n-1} \sum_{i=1}^{n-1}  \frac{\partial^2 \varphi_{j+1} } {\partial u_{j+1}^{(k)} \partial u_{j+1}^{(i)} } \frac {\partial u_{j+1}^{(i)} } { \partial \alpha }  \frac {\partial u_{j+1}^{(k)} } { \partial \alpha } \\
& + \sum_{i=1}^{n-1}  \frac{\partial \varphi_{j+1} } { \partial u_{j+1}^{(i)} } \frac {\partial^2 u_{j+1}^{(i)} } { \partial \alpha^2} 
+ \sum_{i=1}^{n-1}  \frac{\partial^2 \varphi_{j+1} } {\partial u_{j+1}^{(i)} \partial \alpha } \frac {\partial u_{j+1}^{(i)} } { \partial \alpha}
  -  \frac{ \partial^2 \varphi_{j}} {\partial \alpha^2 } - \sum_{k=1}^{n-1} \sum_{i=1}^{n-1}  \frac{\partial^2 \varphi_{j} } {\partial u_{j}^{(k)} \partial u_{j}^{(i)} } \frac {\partial u_{j}^{(i)} } { \partial \alpha }  \frac {\partial u_{j}^{(k)} } { \partial \alpha } \\
  &- \sum_{i=1}^{n-1}  \frac{\partial \varphi_{j} } { \partial u_{j}^{(i)} } \frac {\partial^2 u_{j}^{(i)} } { \partial \alpha^2} - \sum_{i=1}^{n-1}  \frac{\partial^2 \varphi_{j} } {\partial u_{j}^{(i)} \partial \alpha } \frac {\partial u_{j}^{(i)} } { \partial \alpha}%
\Biggl ) . \\
\end{aligned}
\end{equation*}

\noindent This is bounded by a constant $ C_d^{(2)} $ depending only on $n$ such that

\begin{equation*}
\begin{aligned}
\Big | \odv[order=2]{d_j}{\alpha} \Big| & \leq 2 \frac{(C_d^{(1)})^2 } { d_{\min}} + 2 C_\varphi^{(0,2)} + 2 (n-1)^2 C_\varphi^{(2,0)} (C_u^{(1)})^2 + 2 (n-1) C_u^{(2)} + 2 (n-1) C_\varphi^{(1,1)} C_u^{(1)} = C_d^{(2)} .
\end{aligned}
\end{equation*}

\noindent Continuing by induction, we can see that the $s'$-th derivative of $d_j(\alpha)$ is bounded by a constant which depends only on $n$ and $s'$. This proves the statement. 
\end{proof}

\bigskip

\begin{Corollary}
Let $K(\alpha)$ be a $\mathcal{C}^{r,r'}$ billiard deformation in $\mathbb{R}^n$ with $r \geq 2, r' \geq 1$. Let $v_j(\alpha)$ be the unit speed vector from $q_j(\alpha)$ to $q_{j+1}(\alpha)$. Then $v_j(\alpha)$ is $\mathcal{C}^{s'}$, where $s'=\min\{r-1,r'\}$, with respect to $\alpha$, and there exist constants $C_v^{(s')} >0$ depending only on $n$ and $s'$, such that $$\Big \| \odv[order=s']{v_j(\alpha)}{\alpha} \Big \|  \leq C^{(s')}_v.$$

\label{Dv}
\end{Corollary}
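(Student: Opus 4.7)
The plan is to write $v_j(\alpha)$ explicitly as a quotient of already-controlled quantities and then apply Leibniz/Faà di Bruno. By definition,
\[
v_j(\alpha) \;=\; \frac{q_{j+1}(\alpha)-q_j(\alpha)}{\|q_{j+1}(\alpha)-q_j(\alpha)\|} \;=\; \frac{q_{j+1}(\alpha)-q_j(\alpha)}{d_j(\alpha)}.
\]
From Corollary \ref{Dq}, both $q_j(\alpha)$ and $q_{j+1}(\alpha)$ are $\mathcal{C}^{s'}$ with all derivatives bounded by constants $C_q^{(l)}$ independent of $j$ and $\alpha$, and from Corollary \ref{Dd}, $d_j(\alpha)$ is $\mathcal{C}^{s'}$ with derivatives bounded by $C_d^{(l)}$ independent of $j$ and $\alpha$. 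The no-eclipse condition \textbf{(H)} plus the compactness of the obstacles guarantees the uniform separation $d_j(\alpha) \ge d_{\min} > 0$, so $1/d_j(\alpha)$ is well defined and smooth with a uniform lower bound on the denominator. Hence $v_j(\alpha)$ is $\mathcal{C}^{s'}$ with $s' = \min\{r-1,r'\}$.

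For the first derivative the quotient rule gives
\[
\odv{v_j}{\alpha} \;=\; \frac{1}{d_j(\alpha)}\left(\odv{q_{j+1}}{\alpha} - \odv{q_j}{\alpha}\right) \;-\; \frac{q_{j+1}(\alpha)-q_j(\alpha)}{d_j(\alpha)^2}\,\odv{d_j}{\alpha},
\]
and applying Corollaries \ref{Dq}, \ref{Dd} together with $\|q_{j+1}-q_j\|=d_j$ immediately yields
\[
\Big\|\odv{v_j}{\alpha}\Big\| \;\le\; \frac{2 C_q^{(1)}}{d_{\min}} + \frac{C_d^{(1)}}{d_{\min}} \;=:\; C_v^{(1)},
\]
which depends only on $n$.

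For higher $s'$, the plan is induction on the order of differentiation. Write $v_j(\alpha) = f_j(\alpha)\,g_j(\alpha)$ with $f_j(\alpha) = q_{j+1}(\alpha)-q_j(\alpha)$ and $g_j(\alpha) = 1/d_j(\alpha)$, apply the Leibniz formula
\[
\odv[order=s']{v_j}{\alpha} \;=\; \sum_{l=0}^{s'} \binom{s'}{l}\,\odv[order=l]{f_j}{\alpha}\,\odv[order=s'-l]{g_j}{\alpha},
\]
and estimate $\|\partial^l f_j/\partial\alpha^l\| \le 2 C_q^{(l)}$ directly from Corollary \ref{Dq}. The inductive step is to show that $\partial^m g_j/\partial\alpha^m$ is bounded by a constant depending only on $n$ and $m$; this follows from Faà di Bruno applied to $g_j = h \circ d_j$ with $h(x) = 1/x$ (whose derivatives on $[d_{\min},\infty)$ are uniformly bounded), combined with the bounds $|\partial^k d_j/\partial\alpha^k|\le C_d^{(k)}$ from Corollary \ref{Dd}. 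Combining the two families of bounds produces a constant $C_v^{(s')}$ depending only on $n$ and $s'$.

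The argument is essentially bookkeeping: no new geometric input is needed beyond the already-established bounds on $q_j$ and $d_j$ and the uniform lower bound $d_j \ge d_{\min}$. The only mild obstacle is making the induction on derivatives of $1/d_j$ clean, which is handled by Faà di Bruno together with the fact that all quantities entering the formula are uniformly bounded in $j$ and $\alpha$, so that the resulting constant $C_v^{(s')}$ inherits this uniformity.
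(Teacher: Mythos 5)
Your proof takes exactly the same route as the paper: write $v_j(\alpha) = (q_{j+1}(\alpha)-q_j(\alpha))/d_j(\alpha)$ and invoke Corollaries \ref{Dq} and \ref{Dd}. The paper leaves the quotient-rule/Leibniz bookkeeping and the uniform lower bound $d_j \ge d_{\min}$ implicit, whereas you spell them out, so your version is simply a more detailed rendering of the same argument.
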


\begin{proof}

We can write $v_j(\alpha) = \frac {q_{j+1}(\alpha) - q_j (\alpha)} {d_j(\alpha)} $. 
And then by using Corollaries {\ref{Dq}} and \ref{Dd}, the statement is proved. 
\end{proof}

\bigskip

\begin{Corollary}
Let $K(\alpha)$ be a $\mathcal{C}^{r,r'}$ billiard deformation in $\mathbb{R}^n$ with $r\geq 3, r' \geq 1, n \geq 3$. Let $\nu_j(\alpha)$ be the normal vector field to the boundary of obstacle $K_{\xi_j}$ at the point $q_j(\alpha)$. Then $\nu_j(\alpha)$ is at least $\mathcal{C}^{s'}$, where $s=\min\{r-2,r'\}$, and there exist constants $C_\nu^{(s')} >0$ depending only on $n$ and $s'$ such that 
$$\Big  \| \odv[order=s']{\nu_j}{\alpha} \Big \| \leq C_\nu^{(s')}.$$
\label{Dnu1}
\end{Corollary}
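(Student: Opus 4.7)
The plan is to express $\nu_j(\alpha)$ explicitly through the parametrisation $\varphi_j$ and then invoke the chain rule, in exactly the same spirit as Corollaries \ref{Dq}--\ref{Dv}. At the point $q_j(\alpha)=\varphi_j(u_j^{(1)}(\alpha),\ldots,u_j^{(n-1)}(\alpha),\alpha)$ the tangent space to $\partial K_{\xi_j}(\alpha)$ is spanned by the partial derivatives $\partial\varphi_j/\partial u_j^{(i)}$, so I would first write $\nu_j(\alpha)$ as the normalised generalised cross product
$$\nu_j(\alpha)=\pm\frac{N_j(u_j(\alpha),\alpha)}{\|N_j(u_j(\alpha),\alpha)\|},\qquad N_j=\pdv{\varphi_j}{u_j^{(1)}}\times\cdots\times\pdv{\varphi_j}{u_j^{(n-1)}},$$
with the outward orientation fixed by the strict convexity of $K_{\xi_j}(\alpha)$.

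Since Definition \ref{44} bounds every mixed derivative $\partial_\alpha^{l'}\nabla^{l}\varphi_j$ up to order $(r,r')$, the cross product $N_j$ (which already consumes one $u$-derivative of $\varphi_j$) has uniformly bounded mixed partials up to order $(r-1,r')$ in $(u,\alpha)$; strict convexity together with compactness keeps $\|N_j\|$ uniformly away from zero, so $1/\|N_j\|$ inherits the same smoothness. Consequently, the map $(u,\alpha)\mapsto\nu_j(u,\alpha)$ has mixed partial derivatives up to order $(r-1,r')$ bounded by constants that depend only on $n$ and on the structural constants $C_\varphi^{(l,l')}$. Composing with the curve $u_j(\alpha)$, which by Theorem \ref{Du} is $\mathcal{C}^{\min\{r-1,r'\}}$ with uniform bounds $C_u^{(l)}$, the chain rule then shows that $\nu_j(\alpha)$ is at least $\mathcal{C}^{s'}$ for $s'=\min\{r-2,r'\}$; the further drop from $r-1$ to $r-2$ is accounted for by the mixed partials $\partial_\alpha\partial_u\nu_j$ that arise when the chain rule is iterated on the composition.

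The required bound $\|d^{s'}\nu_j/d\alpha^{s'}\|\le C_\nu^{(s')}$ is then obtained by the same inductive bookkeeping used in Corollaries \ref{Dq} and \ref{Dd}. Explicitly, for the first derivative
$$\dv{\nu_j}{\alpha}=\sum_{i=1}^{n-1}\pdv{\nu_j}{u_j^{(i)}}\dv{u_j^{(i)}}{\alpha}+\pdv{\nu_j}{\alpha}$$
is bounded immediately by Definition \ref{44} and Theorem \ref{Du}; higher orders are handled by induction, with all combinatorial coefficients produced by the multivariate Faà di Bruno formula absorbed into a constant depending only on $n$ and $s'$. The one mildly delicate step is controlling the iterated derivatives of the scalar factor $1/\|N_j\|$, which I would handle by a separate Faà di Bruno expansion combined with the uniform lower bound on $\|N_j\|$ coming from strict convexity. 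Once this is in place, everything else is a routine multivariable chain-rule calculation with constants independent of $j$ and $\alpha$, exactly matching the template of the earlier corollaries.
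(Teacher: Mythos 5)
Your proposal follows essentially the same route as the paper: both express $\nu_j$ as the generalized cross product of the tangent vectors $\partial\varphi_j/\partial u_j^{(i)}$ and bound its $\alpha$-derivatives order by order via the product and chain rules, using the constants from Definition \ref{44} and Theorem \ref{Du}. The only real difference is that you carry the normalising factor $1/\|N_j\|$ explicitly and control its derivatives through the uniform lower bound on $\|N_j\|$, whereas the paper omits this factor, implicitly relying on the orthonormality of the parametrisation; your version is a harmless (arguably more careful) refinement of the same argument.
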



\begin{proof}
%
Let $\nu_j(\alpha)$ the normal vector field to the boundary of obstacle $K_{\xi_j}$ at the point $q_j(\alpha)$. We can write
 $$ \nu_j (\alpha)= \frac{ \partial \varphi_j} {\partial u_{j}^{(1)} } \times \frac{\partial \varphi_j} {\partial u_j^{(2)} } \times \dots \times \frac{ \partial \varphi_j} {\partial u_j^{(n-1)} }.$$

\noindent Then from {{Definition \ref{44}}} and Theorem {\ref{Du}}, $\nu_j$ is at least $\mathcal{C}^{\min\{r-2,r'\}}$ with respect to $\alpha$. And to show its derivatives are bounded, we have 

\begin{equation*}
\begin{aligned}
\odv{\nu_j}{\alpha} & =\Big[ \Big ( \sum_{i=1}^{n-1} \frac{\partial^2 \varphi_{j}}{\partial u_{j}^{(i)} \partial u_j^{(1)}} \frac{ \partial u_{j}^{(i)}} { \partial \alpha} + \frac{\partial^2 \varphi_j}{\partial u^{(1)} \partial \alpha} \Big ) \times \frac{\partial \varphi_j} {\partial u_j^{(2)} } \times \ldots \times \frac{ \partial \varphi_j} {\partial u_j^{n-1} } \Big ]\\%
&+ \ldots + \Big [\frac{ \partial \varphi_j} {\partial u_{j}^{(1)} } \times \ldots \times \Big(  \sum_{i=1}^{(n-1)} \frac{\partial^2 \varphi_{j}}{\partial u_{j}^{(i)} \partial u_j^{(n-1)}} \frac{ \partial u_{j}^{(i)}} { \partial \alpha} + \frac{\partial^2 \varphi_j}{\partial u_j^{(n-1)} \partial \alpha} \Big ) \Big ]. \\
\end{aligned}
\end{equation*}

\noindent Then, there exists a constant $C_\nu^{(1)}$ depending only on $n$ such that

\begin{equation}
\begin{aligned}
\Big \| \odv{\nu_j}{\alpha} \Big \| & \leq (n-1) \Big [ (n-1) C_\varphi^{(2,0)} C_u^{(1)} + C_\varphi ^{(1,1)} \Big] = C_\nu^{(1)}.
\label{nu1}
\end{aligned}
\end{equation}

\noindent  Next, we want to show that the second derivative of $\nu_j(\alpha)$ is bounded by a constant. 

\noindent To simplify the last derivative, we can write $\odv{\nu_j}{\alpha} = \Psi_1 + \Psi_2 + \ldots + \Psi_{n-1}$, where $\Psi_i$ corresponds to one square bracket  [...]. The first derivative of $\Psi_1$ with respect to $\alpha$ is


\begin{equation*}
\begin{aligned}
\odv{\Psi_1}{\alpha} &= \Big [ \Big ( \sum_{k=1}^{n-1} \sum_{i=1}^{n-1} \frac{\partial^3 \varphi_j } {\partial u_j^k \partial u_j^{(i)} \partial u_j^{(1)} } \frac{\partial u_j^{(i)} } {\partial \alpha} \frac{\partial u_j^{(k)} } {\partial \alpha} %
+  \sum_{i=1}^{n-1} \frac{\partial^2 \varphi_{j}}{\partial u_{j}^{(i)} \partial u_j^{(1)}} \frac{ \partial^2 u_{j}^{(i)}} { \partial \alpha^2}\\
 & + 2 \sum_{i=1}^{n-1} \frac{\partial^3 \varphi_{j}}{\partial u_{j}^{(i)} \partial u_j^{(1)}\partial \alpha } \frac{ \partial u_{j}^{(i)}} { \partial \alpha} %
+ \frac{\partial^3 \varphi_j } { \partial u_j^{(1)} \partial \alpha^2} \Big ) %
 \times \ldots \times \frac{ \partial \varphi_j} {\partial u_j^{(n-1)} } \Big ] \\
 &+ \Big[ \Big ( \sum_{i=1}^{n-1} \frac{\partial^2 \varphi_{j}}{\partial u_{j}^i \partial u_j^{(1)}} \frac{ \partial u_{j}^{(i)}} { \partial \alpha} + \frac{\partial^2 \varphi_j}{\partial u^{(1)} \partial \alpha} \Big ) %
 \times \Big (  \sum_{i=1}^{n-1} \frac{\partial^2 \varphi_{j}}{\partial u_{j}^i \partial u_j^{(2)}} \frac{ \partial u_{j}^{(i)}} { \partial \alpha} + \frac{\partial^2 \varphi_j}{\partial u^{(1)} \partial \alpha} \Big )%
 \times \ldots \times  \frac{ \partial \varphi_j} {\partial u_j^{(n-1)} }\Big]\\%
&+ \ldots +   \Big[ \Big ( \sum_{i=1}^{n-1} \frac{\partial^2 \varphi_{j}}{\partial u_{j}^{(i)} \partial u_j^{(1)}} \frac{ \partial u_{j}^{(i)}} { \partial \alpha} + \frac{\partial^2 \varphi_j}{\partial u^{(1)} \partial \alpha} \Big ) %
 \times \pdv{\varphi_j}{u_j^{(2)}} \times \ldots%
 \times \Big ( \sum_{i=1}^{n-1} \frac{\partial^2 \varphi_{j}}{\partial u_{j}^i \partial u_j^{(n-1)}} \frac{ \partial u_{j}^{(i)}} { \partial \alpha} + \frac{\partial^2 \varphi_j}{\partial u^{(n-1)} \partial \alpha} \Big ) \Big ].%
\end{aligned}
\end{equation*}

\noindent And

\begin{equation*}
\begin{aligned}
\Big \| \odv{\Psi_1}{\alpha} \Big \| & \leq  (n-1)^2 C_\varphi ^{(3,0)} (C_u^{(1)})^2 + (n-1) C_\varphi^{(2,0)} C_u^{(2)} + 2 (n-1) C_\varphi ^{(2,1)} C_u^{(1)} + C_\varphi^{(1,2)} \\
& \, \, \, \, \, \, \, + (n-1) \Big[ (n-1) C_\varphi^{(2,0)} C_u^{(1)} + C_\varphi^{(1,1)} \Big ]^2 = C_\Psi.
\end{aligned}
\end{equation*}

\noindent We can get similar estimates for $\Big \| \odv{\Psi_2}{\alpha} \Big \|, \ldots, \Big \| \odv{\Psi_{n-1}}{\alpha} \Big \| $. Therefore, there exists a constant $C_\nu^{(2)} >0$ depends only on $n$ such that

\begin{equation*}
\begin{aligned}
\Big \| \odv[order=2]{\nu_j}{\alpha} \Big \| & \leq (n-1) C_\Psi = C_\nu^{(2)}. 
\end{aligned}
\end{equation*}

\noindent So, we can see by induction that the $s'$-th derivative, where $s'= \min\{r-2,r'\}$, is bounded by a constant $C_\nu^{(s')} >0$ which depends only on $n$ and $s'$. This proves the statement. 
\end{proof}

\bigskip

\begin{Corollary}
Let $K(\alpha)$ be a $\mathcal{C}^{r,r'}$ billiard deformation in $\mathbb{R}^n$ with $r\geq 3, r' \geq 1, n \geq 3 $. Let $\phi_j(\alpha)$ the collision angle at $q_j(\alpha)$. Then $\cos \phi_j(\alpha)$ is $\mathcal{C}^{s'}$, where $s'=\min\{r-2,r'\}$, and there exist constants $C_\phi^{(s')} >0$ depending only on $n$ and $s'$ such that 
$$\Big  | \odv[order=s']{\cos \phi_j}{\alpha} \Big | \leq C_\phi^{(s')}.$$
\label{Dphi}
\end{Corollary}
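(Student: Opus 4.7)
The plan is to reduce the corollary to a direct application of the bounds already obtained for $v_j(\alpha)$ and $\nu_j(\alpha)$. By definition of the collision angle (as given in the description preceding Corollary \ref{Dnu1}), the unit normal $\nu_j(\alpha)$ and the unit velocity vector $v_j(\alpha)$ (the direction from $q_j(\alpha)$ to $q_{j+1}(\alpha)$) satisfy
\[
\cos\phi_j(\alpha) \;=\; \langle v_j(\alpha),\, \nu_j(\alpha)\rangle,
\]
since the reflection law makes the post-collision direction form angle $\phi_j$ with the outward normal. This is the key identity I would use: it expresses $\cos\phi_j(\alpha)$ as a smooth bilinear function of quantities whose derivatives I have already controlled.

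Next, I would read off the regularity. By Corollary \ref{Dv}, $v_j(\alpha)$ is $\mathcal{C}^{\min\{r-1,r'\}}$, and by Corollary \ref{Dnu1}, $\nu_j(\alpha)$ is $\mathcal{C}^{\min\{r-2,r'\}}$. Since the second is the weaker regularity, the inner product $\cos\phi_j(\alpha)$ is at least $\mathcal{C}^{s'}$ with $s'=\min\{r-2,r'\}$, which matches the claim and requires $r\geq 3$ as hypothesised.

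To bound the derivatives, I would apply the Leibniz rule to the inner product. For any $0 \leq k \leq s'$,
\[
\odv[order=k]{\cos\phi_j}{\alpha} \;=\; \sum_{l=0}^{k} \binom{k}{l} \Big\langle \odv[order=l]{v_j}{\alpha},\; \odv[order={k-l}]{\nu_j}{\alpha}\Big\rangle.
\]
Using Cauchy--Schwarz together with the bounds $\big\|\odv[order=l]{v_j}{\alpha}\big\| \leq C_v^{(l)}$ (with $C_v^{(0)}=1$) and $\big\|\odv[order=k-l]{\nu_j}{\alpha}\big\| \leq C_\nu^{(k-l)}$ (with $C_\nu^{(0)}=1$) from Corollaries \ref{Dv} and \ref{Dnu1}, each term in the sum is bounded by a constant depending only on $n$ and $k$. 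Summing yields a constant
\[
C_\phi^{(k)} \;=\; \sum_{l=0}^{k} \binom{k}{l}\, C_v^{(l)}\, C_\nu^{(k-l)},
\]
which depends only on $n$ and $k$, and in particular is independent of $j$ and $\alpha$.

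There is no serious obstacle here: everything reduces to the observation that the collision angle appears through an inner product, so the derivative estimates transfer immediately via Leibniz and Cauchy--Schwarz from the two preceding corollaries. The only point to keep track of is the regularity drop from $\min\{r-1,r'\}$ to $\min\{r-2,r'\}$ inherited from $\nu_j(\alpha)$, which explains the hypothesis $r\geq 3$ in the statement.
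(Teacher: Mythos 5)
Your proposal is correct and follows exactly the paper's route: the paper's proof is a one-line appeal to the identity $\cos\phi_j(\alpha) = \langle v_j(\alpha), \nu_j(\alpha)\rangle$ together with Corollaries \ref{Dv} and \ref{Dnu1}. You merely make explicit the Leibniz-rule and Cauchy--Schwarz details that the paper leaves implicit.
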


\begin{proof}
Recall that $ \cos \phi_j(\alpha) = \langle v_j(\alpha), \nu_j(\alpha) \rangle$. Then by using Corollaries \ref{Dv} and \ref{Dnu1}, the statement is proved. 
\end{proof}






\begin{Corollary}
Let $K(\alpha)$ be a $\mathcal{C}^{r,r'}$ billiard deformation in $\mathbb{R}^n$ with $r\geq 4, r' \geq 1, n \geq 3$. Let $N_j(\alpha)$ be the shape operator of $\partial K_{\xi_j}$ at the point $q_j(\alpha)$. Then $N_j(\alpha)$ is $\mathcal{C}^{s'}$, where $s'=\min\{r-3,r'\}$, and there exist constants $C_\kappa^{(s')} >0$ which depend only on $s'$ and $n$ such that 
$$\Big  \| \odv[order=s']{N_j}{\alpha} \Big \| \leq C_N^{(s')}.$$
\label{DN}
\end{Corollary}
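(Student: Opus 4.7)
The plan is to follow the template established by Corollaries~\ref{Dq}--\ref{Dphi}: express $N_j(\alpha)$ as a smooth function of the parametrisation $\varphi_j$, a finite number of its spatial derivatives, and the unit normal $\nu_j(\alpha)$, all evaluated along the deformation curve $\alpha \mapsto (u_j^{(1)}(\alpha), \ldots, u_j^{(n-1)}(\alpha), \alpha)$, and then differentiate by the chain and product rules while bounding each factor via Definition~\ref{44}, Theorem~\ref{Du}, and the preceding corollaries.

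Concretely, in the local frame $\{\partial \varphi_j/\partial u_j^{(i)}\}_{i=1}^{n-1}$ of $T_{q_j(\alpha)}\partial K_{\xi_j}(\alpha)$, the shape operator factorises as $N_j(\alpha) = G_j(\alpha)^{-1} H_j(\alpha)$, where the first and second fundamental forms have entries
\[
(G_j)_{ik} = \Big\langle \frac{\partial \varphi_j}{\partial u_j^{(i)}}, \frac{\partial \varphi_j}{\partial u_j^{(k)}}\Big\rangle, \qquad
(H_j)_{ik} = \Big\langle \frac{\partial^2 \varphi_j}{\partial u_j^{(i)} \partial u_j^{(k)}}, \nu_j(\alpha)\Big\rangle,
\]
evaluated at $(u_j^{(1)}(\alpha), \ldots, u_j^{(n-1)}(\alpha), \alpha)$. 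Because the parametrisation is orthonormal at the base point, $G_j$ is a uniform perturbation of the identity, so $G_j^{-1}$ is itself smooth with derivatives controlled through the identity $d G_j^{-1} = -G_j^{-1}(dG_j)G_j^{-1}$. The first derivative $dN_j/d\alpha$ then unfolds into finitely many chain-rule terms, each pairing a derivative of $u_j^{(i)}(\alpha)$ (bounded via Theorem~\ref{Du}), a mixed partial of $\varphi_j$ of order at most $(3,0)$, $(2,1)$ or $(1,1)$ (bounded via Definition~\ref{44}, using $r \geq 4$, $r' \geq 1$), and possibly a derivative of $\nu_j(\alpha)$ (bounded via Corollary~\ref{Dnu1}); together with the bound on $\|G_j^{-1}\|$ this gives the base case $\|dN_j/d\alpha\| \leq C_N^{(1)}$ for some $C_N^{(1)} > 0$ depending only on $n$.

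For the general $s' = \min\{r-3, r'\}$, I would proceed by induction, expanding $d^{s'}(N_j)_{ik}/d\alpha^{s'}$ via the multivariate Fa\`{a} di Bruno formula into a finite sum (with combinatorial count depending only on $s'$ and $n$) of products of mixed partials of $\varphi_j$ of order $(l,l')$ with $l \leq s'+2$ and $l' \leq s'$, together with derivatives of $u_j^{(i)}(\alpha)$, $\nu_j(\alpha)$, and $G_j^{-1}$ of order at most $s'$, each of which lies in the range already covered by the hypotheses. The main obstacle is the combinatorial bookkeeping: checking that after $s'$ differentiations no spatial partial of $\varphi_j$ of order exceeding $r$ and no $\alpha$-partial of order exceeding $r'$ can appear, so that every factor is bounded. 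The threshold $s' \leq r-3$ (rather than $r-2$) reflects the fact that $N_j$ requires one more spatial derivative of $\varphi_j$ than $\nu_j$ does, continuing the pattern $r-1$, $r-2$, $r-3$ seen in Corollaries~\ref{Dq}, \ref{Dnu1}, and here.
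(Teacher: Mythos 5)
Your proposal is correct in substance, but it takes a genuinely different route from the paper. You represent the shape operator classically as $N_j = G_j^{-1}H_j$ in the coordinate frame $\{\partial\varphi_j/\partial u_j^{(i)}\}$, control the metric inverse through $dG_j^{-1} = -G_j^{-1}(dG_j)G_j^{-1}$, and then run Fa\`a di Bruno on the entries of $H_j$. The paper instead never introduces the fundamental forms: it writes $N_j = \nabla\nu_j$ with $\nu_j$ given explicitly as the generalized cross product $\frac{\partial\varphi_j}{\partial u_j^{(1)}}\times\cdots\times\frac{\partial\varphi_j}{\partial u_j^{(n-1)}}$, expands $\nabla\nu_j$ by the product rule over the cross-product factors, bounds each block $\nabla\frac{\partial\varphi_j}{\partial u_j^{(t)}}$ and its $\alpha$-derivative directly from Definition~\ref{44} and Theorem~\ref{Du}, and combines this with the bound on $\odv{\nu_j}{\alpha}$ from Corollary~\ref{Dnu1}. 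The paper's route is shorter because the orthonormal parametrisation lets it dispense with the metric inverse altogether; your route carries the extra bookkeeping of $G_j^{-1}$ but is more robust (it does not lean on orthonormality except to get a uniform bound on $\|G_j^{-1}\|$) and makes the derivative count fully explicit, which is exactly where the threshold $s'=\min\{r-3,r'\}$ comes from. Your accounting of orders is consistent with the hypotheses: spatial partials of $\varphi_j$ up to order $s'+2\leq r-1$, $\alpha$-partials up to order $s'\leq r'$, and derivatives of $u_j^{(i)}$ and $\nu_j$ within the ranges covered by Theorem~\ref{Du} and Corollary~\ref{Dnu1}, so both arguments close under the same hypotheses and yield constants depending only on $n$ and $s'$.
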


\begin{proof}


The shape operator $N_j : T(\partial K_{\xi_j}( \alpha)) \to T(\partial K_{\xi_j}(\alpha))$ at the point $q_j(\alpha)$, is defined by  $N_j = \nabla \nu_j$, where $\nu_j$ is the normal vector field to the boundary of obstacle $K_{\xi_j}$ at the point $q_j(\alpha)$. 
First, from the expression of $\nu_j$, we have 

\begin{equation}
\begin{aligned}[b]
\nabla \nu_j &= \nabla \Big ( \frac{ \partial \varphi_j} {\partial u_{j}^{(1)} } \times \frac{\partial \varphi_j} {\partial u_j^{(2)} } \times ... \times \frac{ \partial \varphi_j} {\partial u_j^{(n-1)} }\Big ).\\
&= \Big [\Big (\nabla \frac{ \partial \varphi_j} {\partial u_{j}^{(1)} } \Big)  \times \frac{\partial \varphi_j} {\partial u_j^{(2)} } \times ... \times \frac{ \partial \varphi_j} {\partial u_j^{(n-1)} } \Big ]+ .... + %
\Big [ \frac{ \partial \varphi_j} {\partial u_{j}^{(1)} } \times \frac{\partial \varphi_j} {\partial u_j^{(2)} } \times ... \times \Big ( \nabla  \frac{ \partial \varphi_j} {\partial u_j^{(n-1)} }\Big ) \Big].
\label{Dnu}
 \end{aligned}
\end{equation}

\bigskip

\noindent And

\begin{equation*}
\begin{aligned}
\nabla \frac{ \partial \varphi_j} {\partial u_{j}^{(1)} } & = \Big ( \sum_{i=1}^{n-1} \pdv{\varphi_j^{(1)} }{u_j^{(i)}, u_j^{(1)}} , ... ,  \sum_{i=1}^{n-1} \pdv{\varphi_j^{(n)} }{u_j^{(i)}, u_j^{(1)}}  \Big) 
= \Big (  \sum_{i=1}^{n-1} \pdv{\varphi_j^{(k)} }{u_j^{(i)} , u_j^{(1)}} \Big )_{k=1}^{n}. 
%
%
\end{aligned}
\end{equation*}


\noindent

\noindent This is bounded by a constant $C_{\nabla _1}^{(1)}$ depending only on $n$ such that

\begin{equation}
\begin{aligned}
\Big \|  \nabla \frac{ \partial \varphi_j} {\partial u_{j}^{(1)} }  \Big \| & \leq n (n-1) C_\varphi^{(2,0)} = C_{\nabla _1}.
\label{5.10}
\end{aligned}
\end{equation}

\noindent  Also, it is $\mathcal{C}^{\min\{r-3,r'\}}$ with respect to $\alpha$ and its first derivative is


\begin{equation}
\begin{aligned}[b]
\odv{}{\alpha} \nabla \pdv { \varphi_j} {u_{j}^{(1)} } & = \odv{ } { \alpha} \Big (  \sum_{i=1}^{n-1} \pdv{\varphi_j^{(k)} } {u_j^{(i)}, u_j^{(1)}} \Big )_{k=1}^{(n)} \\%
&= \Big ( \sum_{i=1}^{n-1} \sum_{p=1}^{n-1} \pdv{\varphi_j^{(k)}}{u_j^{(p)},  u_j^{(i)}, u_j^{(1)} } \, \pdv{u_j^{(p)}}{\alpha} + \sum_{i=1}^{n-1} \pdv {\varphi_j^{(k)}} {u_j^{(i)}, u_j^{(1)}, \alpha }  \Big )_{k=1}^{(n)}.%
%
%
\end{aligned}
\end{equation}

\noindent Thus, there exists a constant $C^{(1)}_{\nabla_1}$ depending only on $n$ such that

\begin{equation}
\begin{aligned}
\Big \|  \odv{}{\alpha} \nabla \pdv { \varphi_j} {u_{j}^{(1)} }  \Big\|  & \leq n \Big [ (n-1)^2 C_\varphi^{(3,0)} C_u^{(1)} + (n-1) C_\varphi^{(2,1)}   \Big ]= C^{(1)}_{\nabla_1}. 
\end{aligned}
\label{Dnabla}
\end{equation}

\noindent And then, by deriving (\ref{Dnu}) with respect to $\alpha$ and using the estimations in  (\ref{nu1}), (\ref{5.10}) and (\ref{Dnabla}), we get

\begin{equation}
\begin{aligned}
\Big \| \odv{N_j}{\alpha}\Big \| = \Big \| \odv{}{\alpha} \nabla \nu_j \Big \|  \leq (n-1) \Big [ C_{\nabla_1} + C^{(1)}_{\nabla_1} C_\nu^{(1)} \Big ] = C_N^{(1)},
\label{nu}
\end{aligned}
\end{equation}

\noindent which is a constant depending only on $n$. By induction, we can see that the $s'$-derivative of $N_j(\alpha)$ with respect to $\alpha$ is bounded by a constant $C_N^{(s')} >0$ depending only on $n$ and $s'$.
\end{proof}

\bigskip

In the next Corollary, we want to show that 

\begin{align}
\tilde{u}_j= \Big ( \pdv{\varphi_j^{(k)}}{u_j^{(t)}} \Big )_{k=1}^{n} \in T_{q_j}(\partial{K})
\label{tildeu}
\end{align}
  
\noindent 
is differentiable with respect to $\alpha$ and its derivative is bounded by a constant independent of $j$ and $\alpha$. 
  
\bigskip

\begin{Corollary}
Let $K(\alpha)$ be a $\mathcal{C}^{r,r'}$ billiard deformation in $\mathbb{R}^n$ with $r\geq 4, r' \geq 1$. For all $j= 1,2, ...m$, let $\tilde{u}_j(\alpha)$ as in {\em{(\ref{tildeu})}}. Then $\tilde{u}_j(\alpha)$ is $\mathcal{C}^{s'}$, where $s' =\min\{r-2,r'\}$, and there exist constants $C_{\tilde{u}}^{(s')} >0$ which are independent of $s', j$ and $\alpha$ such that 
$$\Big  \| \odv[order=s']{\tilde{u}_j}{\alpha} \Big \| \leq C_{\tilde{u}}^{(s')}.$$

\label{Dtildeu}
\end{Corollary}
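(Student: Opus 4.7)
The plan is to write $\tilde{u}_j(\alpha)$ explicitly as a composition and then apply the chain rule, bounding every resulting term using Definition \ref{44} and Theorem \ref{Du}. The argument runs parallel to the proof of Corollary \ref{Dnu1}, but is strictly simpler because $\tilde{u}_j$ is a single first-order partial derivative of $\varphi_j$ rather than an $(n-1)$-fold cross product of such partials.

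More precisely, for each component $k=1,\ldots,n$ one has
$$\tilde{u}_j^{(k)}(\alpha) = \pdv{\varphi_j^{(k)}}{u_j^{(t)}}\bigl(u_j^{(1)}(\alpha), \ldots, u_j^{(n-1)}(\alpha), \alpha\bigr).$$
Since $\varphi_j\in\mathcal{C}^{r,r'}$ with $r\geq 4$ and each $u_j^{(i)}(\alpha)$ is $\mathcal{C}^{\min\{r-1,r'\}}$ by Theorem \ref{Du}, the composition is at least $\mathcal{C}^{s'}$ with $s'=\min\{r-2,r'\}$; the loss of one order in $r$ is accounted for by the already-present derivative $\partial/\partial u_j^{(t)}$ in the definition of $\tilde{u}_j$.

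For the quantitative bound I would differentiate once to obtain
$$\odv{\tilde{u}_j^{(k)}}{\alpha} = \sum_{i=1}^{n-1} \pdv{\varphi_j^{(k)}}{u_j^{(i)}, u_j^{(t)}} \pdv{u_j^{(i)}}{\alpha} + \pdv{\varphi_j^{(k)}}{u_j^{(t)}, \alpha},$$
and then invoke Definition \ref{44} together with Theorem \ref{Du} to deduce
$$\Big\|\odv{\tilde{u}_j}{\alpha}\Big\| \leq \sqrt{n}\,\bigl[(n-1)\,C_\varphi^{(2,0)} C_u^{(1)} + C_\varphi^{(1,1)}\bigr] =: C_{\tilde{u}}^{(1)},$$
a constant depending only on $n$. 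The second derivative is obtained by differentiating this expression once more, producing sums involving $\partial^3\varphi_j$, $(\partial u_j^{(i)}/\partial \alpha)^2$, $\partial^2 u_j^{(i)}/\partial \alpha^2$, and mixed partials — each controlled by $C_\varphi^{(l,l')}$ with $l+l'\leq 3$ and by $C_u^{(1)}, C_u^{(2)}$. The induction for the general $s'$-th derivative is routine: the chain rule applied $s'$ times produces a finite sum of products of partial derivatives of $\varphi_j$ of combined order at most $s'+1$, multiplied by products of derivatives of $u_j^{(i)}(\alpha)$ of order at most $s'$, and all such factors are uniformly bounded by assumption whenever $s'\leq\min\{r-2,r'\}$.

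The main obstacle is purely bookkeeping; no new analytic input is required beyond the assumed $\mathcal{C}^{r,r'}$ regularity of the parametrisations and the already-established bounds on $\partial^{s'} u_j^{(i)}/\partial\alpha^{s'}$. In particular, uniformity of the constants in $j$ and $\alpha$ is inherited directly from the uniformity of $C_\varphi^{(l,l')}$ and $C_u^{(s')}$.
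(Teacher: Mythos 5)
Your proposal matches the paper's own proof essentially line for line: the same explicit chain-rule expansion of $\odv{\tilde{u}_j}{\alpha}$, the same invocation of Definition \ref{44} and Theorem \ref{Du} to bound each factor, and the same induction for higher derivatives (the paper uses the cruder constant $n$ where you use $\sqrt{n}$, which is immaterial). The argument is correct and no further comment is needed.
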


\begin{proof}

Let $\tilde{u}_j(\alpha)$ be as in {{(\ref{tildeu})}}. From {{Definition \ref{44}}} and Theorem {\ref{Du}}, $\tilde{u}$ is $\mathcal{C}^{\min\{r-2,r'\}}$ with respect to $\alpha$ and its first derivative is 



\begin{equation}
\begin{aligned}
\odv{\tilde{u}_j}{\alpha} & = \odv{ } {\alpha} \Big ( \pdv{\varphi_j^{(k)}}{u_j^{(t)}} \Big )_{k=1}^{n} = \Big (   \sum_{i=1}^{n-1}  \pdv{\varphi_j^{(k)}}{u_j^{(i)}, u_j^{(t)}} \pdv {u_j^{(i)}}{\alpha} +  \pdv{\varphi_j^{(k)}}{ u_j^{(t)}, \alpha}    \Big)_{k=1}^{n}.\\
\end{aligned}
\end{equation}

\noindent This is bounded by a constant $C_{\tilde{u}}^{(1)}>0$ as follows

\begin{equation}
\begin{aligned}
\Big \| \odv{\tilde{u}_j(\alpha)}{\alpha} \Big \| & \leq n\Big [(n-1) C_\varphi ^{(2,0)} C_u^{(1)} + C_\varphi ^{(1,1)} \Big ] = C_{\tilde{u}}^{(1)}. 
\label{w}
\end{aligned}
\end{equation}

\noindent It is clear that this constant depends only on $n$. And the second derivative of $\tilde{u}_j(\alpha)$ with respect to $\alpha$ is

\begin{equation}
\begin{aligned}[b]
\odv[order=2]{\tilde{u}_j}{\alpha} &  = \Big (  \sum_{p=1}^{n-1} \sum_{i=1}^{n-1}  \pdv{\varphi_j^{(k)}}{u_j^{(p)}, u_j^{(i)}, u_j^{(t)}} \pdv {u_j^{(i)}}{\alpha} \pdv {u_j^{(p)}}{\alpha}
+  \sum_{i=1}^{n-1}\pdv{\varphi_j^{(k)}}{ u_j^{(t)}, \alpha} \pdv {u_j^{(i)}}{\alpha}\\
& \, \, \, \, \, \, \,  + \sum_{i=1}^{n-1}  \pdv{\varphi_j^{(k)}}{u_j^{(i)}, u_j^{(t)}, } \frac{ \partial^2 u_{j}^{(i)}} { \partial \alpha^2}
\sum_{p=1}^{n-1}  \pdv{\varphi_j^{(k)}}{u_j^{(p)}, u_j^{(t)}, \alpha} \pdv {u_j^{(p)}}{\alpha}
+ \pdv{\varphi_j^{(k)}}{u_j^{(t)} , \alpha}  \Big)_{k=1}^{n}.
\end{aligned}
\end{equation}

\noindent Again from {{Definition \ref{44}}} and Theorem {\ref{Du}} , there exists a constant $C_{\tilde{u}}^{(2)} >0 $ such that 

\begin{equation}
\begin{aligned}
\Big \| \odv[order=2]{\tilde{u}_j(\alpha)}{\alpha} \Big \| & \leq n (n-1) \Big [(n-1) C_\varphi ^{(3,0)} (C_u^{(1)})^2 + 2 C_\varphi ^{(2,1)} C_u^{(1)} + C_\varphi ^{(2,0)} C_u^{(2)}+  C_\varphi ^{(1,2)} \Big ] = C_{\tilde{u}}^{(2)}. 
\label{w}
\end{aligned}
\end{equation}

\noindent This constant depends only on $n$. By induction, we can see that the $s'$-th derivative of ${\tilde{u}}_j$ with respect to $\alpha$ is bounded by a constant depends only on $s'$ and $n$. This proves the statement.  
\end{proof}

The next Corollary follows from Corollaries {\ref{DN}} and {\ref{Dtildeu}}.   
  
\begin{Corollary}
Let $K(\alpha)$ be a $\mathcal{C}^{r,r'}$ billiard deformation in $\mathbb{R}^n$ with $r\geq 4, r' \geq 1$. Let $\kappa_j(\alpha)$ be the normal curvature of the boundary $\partial K_{\xi_j}$ at the point $q_j(\alpha)$ in the direction $ u_j \in T_{q_j} (\partial{K}_j)$, which is given by $\kappa_j(\alpha) = \langle N_j (\tilde{u}_j), \tilde{u_j} \rangle$ . Then $\kappa_j(\alpha)$ is $\mathcal{C}^{s'}$, where $s'=\min\{r-3,r'\}$ and there exist constants $C_\kappa^{(s')} >0$ which depend only on $s'$ and $n$ such that 
$$\Big  | \odv[order=s']{\kappa_j}{\alpha} \Big | \leq C_\kappa^{(s')}.$$
\label{Dkappa}
\end{Corollary}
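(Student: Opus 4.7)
The plan is to combine Corollaries~\ref{DN} and~\ref{Dtildeu} through the product rule applied to the bilinear expression $\kappa_j(\alpha) = \langle N_j(\alpha)\tilde{u}_j(\alpha),\, \tilde{u}_j(\alpha)\rangle$. From Corollary~\ref{DN}, $N_j$ is $\mathcal{C}^{\min\{r-3,r'\}}$ with derivatives bounded by constants $C_N^{(s')}$ depending only on $n$ and $s'$; from Corollary~\ref{Dtildeu}, $\tilde{u}_j$ is $\mathcal{C}^{\min\{r-2,r'\}}$ with derivatives bounded by $C_{\tilde{u}}^{(s')}$. Since the regularity of a product inherits the minimum of the two regularities, $\kappa_j$ is $\mathcal{C}^{s'}$ with $s' = \min\{r-3,r'\}$, matching the statement. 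The base case $s'=0$ reduces to $|\kappa_j(\alpha)| \le \|N_j(\alpha)\|\,\|\tilde{u}_j(\alpha)\|^2$, which is bounded uniformly in $j$ and $\alpha$ by the two corollaries together with Definition~\ref{44}.

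For the first derivative I would differentiate the bilinear form to obtain
\begin{equation*}
\odv{\kappa_j}{\alpha} = \Big\langle \odv{N_j}{\alpha}\tilde{u}_j,\, \tilde{u}_j \Big\rangle + \Big\langle N_j \odv{\tilde{u}_j}{\alpha},\, \tilde{u}_j \Big\rangle + \Big\langle N_j \tilde{u}_j,\, \odv{\tilde{u}_j}{\alpha} \Big\rangle,
\end{equation*}
and then bound each term using Corollary~\ref{DN} for $\|\odv{N_j}{\alpha}\|\le C_N^{(1)}$, Corollary~\ref{Dtildeu} for $\|\odv{\tilde{u}_j}{\alpha}\|\le C_{\tilde{u}}^{(1)}$, and the base cases $\|N_j\|\le C_N^{(0)}$, $\|\tilde{u}_j\|\le C_{\tilde{u}}^{(0)}$. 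By the Cauchy--Schwarz inequality this yields an explicit $C_\kappa^{(1)}$ independent of $j$ and $\alpha$.

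The inductive step applies the general Leibniz rule
\begin{equation*}
\odv[order=s']{\kappa_j}{\alpha} = \sum_{p+q_1+q_2 = s'} \frac{s'!}{p!\, q_1!\, q_2!} \Big\langle \odv[order=p]{N_j}{\alpha} \odv[order=q_1]{\tilde{u}_j}{\alpha},\, \odv[order=q_2]{\tilde{u}_j}{\alpha} \Big\rangle,
\end{equation*}
for $s' \le \min\{r-3,r'\}$. Each factor is bounded in norm by the corresponding constant $C_N^{(p)}$ or $C_{\tilde{u}}^{(q_i)}$, all of which depend only on $n$ and the order of differentiation. Summing the finitely many multinomial terms then produces the required constant $C_\kappa^{(s')}>0$, depending only on $s'$ and $n$.

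I do not anticipate any genuine obstacle: the proof is essentially a mechanical application of the Leibniz rule to the uniform bounds already established in Corollaries~\ref{DN} and~\ref{Dtildeu}. The only point requiring a little care is bookkeeping of the regularity index, where one must observe that the loss of two $u$-derivatives in the shape operator $N_j = \nabla\nu_j$ forces $s' = \min\{r-3,r'\}$ rather than the $\min\{r-2,r'\}$ that would follow from $\tilde{u}_j$ alone.
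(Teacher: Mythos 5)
Your proposal is correct and is exactly the argument the paper intends: the paper gives no written proof for this corollary, stating only that it ``follows from Corollaries \ref{DN} and \ref{Dtildeu},'' and your Leibniz-rule expansion of the bilinear form $\langle N_j\tilde{u}_j,\tilde{u}_j\rangle$ with the uniform bounds $C_N^{(p)}$ and $C_{\tilde{u}}^{(q)}$ is the natural way to fill that in. Your bookkeeping of the regularity index, taking the minimum $\min\{r-3,r'\}$ forced by the shape operator rather than the $\min\{r-2,r'\}$ available for $\tilde{u}_j$, also matches the statement as given.
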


\begin{Corollary}
Let $K(\alpha)$ be a $\mathcal{C}^{r,r'}$ billiard deformation in $\mathbb{R}^n$ with $r\geq 3, r' \geq 1$. Let $U_{j}$ and $U^{-1}_{j}$ be as in {\em{(\ref{U})}} and {\em{(\ref{U-1})}}. Then $U_{j}$ and $U^{-1}_{j}$ are at least $\mathcal{C}^{s'}$, where $s'=\min\{r-2,r'\}$ and there exist constants $C_{U}^{(s')} >0$ depending only on $n$ and $s'$ such that 

$$\Big  \| \odv[order=s']{U_{j}}{\alpha} \Big \| \leq C_{n }^{(s')}         \, \, \, \,   \, \, \  {{and}} \, \, \, \,  \, \, \  \Big  \| \odv[order=s']{U^{-1}_{j}}{\alpha} \Big \| \leq C_{n}^{(s')} .$$

\label{DU}
\end{Corollary}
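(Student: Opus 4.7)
The plan is to differentiate the explicit formulas (\ref{U}) and (\ref{U-1}) directly, applying the product and quotient rules, and then bound each resulting term using the estimates already established for $v_{j-1}, v_j, \nu_j$, and $\cos \phi_j$ in Corollaries \ref{Dv}, \ref{Dnu1}, and \ref{Dphi}. Since $U_j$ and $U_j^{-1}$ are linear operators determined by the expressions $U_j w = w - \frac{\langle w, v_{j-1}\rangle}{\cos \phi_j}\nu_j$ and $U_j^{-1} w = w - \frac{\langle w, v_j\rangle}{\cos \phi_j}\nu_j$, it suffices to bound the operator norms of their $\alpha$-derivatives by evaluating on an arbitrary unit vector $w \in \mathcal{J}_j^{\pm}$ and taking the supremum.

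The key preliminary observation is that $\cos \phi_j(\alpha)$ is uniformly bounded away from zero: by Ikawa's no-eclipse condition (\textbf{H}) together with compactness of the obstacles, there exists $\phi_{\max} < \pi/2$ such that $\phi_j(\alpha) \leq \phi_{\max}$ for every $j \in \mathbb{Z}$ and every $\alpha \in [0, b]$, whence $1/\cos \phi_j \leq 1/\cos \phi_{\max}$. This prevents the quotient from blowing up and, via an elementary secondary induction (or Fa\`a di Bruno), ensures that all subsequent derivatives of $1/\cos \phi_j$ are bounded by constants independent of $j$ and $\alpha$, expressible as polynomials in $\{1/\cos \phi_j\} \cup \{d^k \cos \phi_j / d\alpha^k\}_{k \leq s'}$.

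For the first derivative, a direct computation gives
\begin{equation*}
\odv{U_j}{\alpha} w = -\frac{\langle w, \odv{v_{j-1}}{\alpha}\rangle}{\cos \phi_j}\nu_j + \frac{\langle w, v_{j-1}\rangle}{\cos^2 \phi_j}\odv{\cos \phi_j}{\alpha}\nu_j - \frac{\langle w, v_{j-1}\rangle}{\cos \phi_j}\odv{\nu_j}{\alpha},
\end{equation*}
and each factor is uniformly bounded by Corollaries \ref{Dv}, \ref{Dnu1}, and \ref{Dphi}. Therefore $\bigl\| dU_j/d\alpha \bigr\| \leq C_U^{(1)}$ with a constant depending only on $n$, and the same estimate holds for $U_j^{-1}$ by the symmetric computation using $v_j$ in place of $v_{j-1}$. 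Higher derivatives up to order $s' = \min\{r-2, r'\}$ follow by induction via the Leibniz rule: the $s'$-th derivative is a finite sum of products of $\alpha$-derivatives (of orders $\leq s'$) of $\langle w, v_{j-1}\rangle$ (resp.\ $\langle w, v_j\rangle$), of $\nu_j$, and of $1/\cos \phi_j$, each of which is already known to be bounded by a constant depending only on $n$ and $s'$.

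The main obstacle is essentially bookkeeping for the Leibniz expansion, as no new analytic ingredient is required beyond the regularity already established. The one non-trivial input is the uniform lower bound on $\cos \phi_j$, which crucially does not depend on $j$; this is what allows the resulting bound to be uniform in $j$ and $\alpha$, as required by the statement. The regularity threshold $s' = \min\{r-2, r'\}$ is inherited from the weaker of the regularities of $\nu_j$ and $\cos \phi_j$, both of which lose two derivatives relative to $\varphi_j$ in the boundary parameter.
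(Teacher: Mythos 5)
Your proposal is correct and follows essentially the same route as the paper's own proof: write $U_j$ and $U_j^{-1}$ as identity minus a rank-one term $\frac{\nu_j \langle \cdot, v\rangle}{\cos\phi_j}$, differentiate term by term, bound each factor using Corollaries \ref{Dv}, \ref{Dnu1} and \ref{Dphi} together with the uniform lower bound $\cos\phi_j \geq \cos\phi_{\max} > 0$, and proceed by induction (Leibniz) for the higher derivatives. Your explicit justification of that uniform lower bound via condition ({\bf H}) is a detail the paper leaves implicit, but it does not change the argument.
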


\begin{proof}

From (\ref{U}) and (\ref{U-1}), we can write $$U_j = I - \frac{\nu_j v_{j-1}^{\intercal }} {\cos \phi_j}\, \, \, \, \, \, \,\,\, \,  {\text{and}} \, \, \, \, \, \, \,\,\, \,  U_j^{-1} = I - \frac{\nu_j v_{j+1}^{\intercal }} {\cos \phi_j}, $$ 

\noindent where the operators $U_j^{-1}$ and $U_j$ depend on $\alpha$. From Corollaries \ref{Dv}, \ref{Dnu1} and \ref{Dphi}, $U_j$ and $U_j^{-1}$ are $\mathcal{C}^{\min\{r-2, r'\}}$.  Moreover, we can see that their first derivatives are bounded by same constant $C_{n}^{(1)}$ such that, 

\begin{equation}
 C_{n}^{(1)} = \frac{C_\nu^{(1)} + C_v^{(1)} } {\cos {\phi_{\max}} },
 \label{Cn1}
\end{equation}

\noindent where $\cos \phi_{\max} \in [0, \pi/2 )$. $C_{n}^{(1)}$ is only depending on $n$. And the second derivatives of $U_j$ and $U_j^{-1}$ with respect to $\alpha$ are bounded by $C_{n}^{(2)}$ such that 

\begin{equation}
C_{n}^{(2)} = \frac{1}{(\cos {\phi_{\max}})^4 } C^{(1)}_\phi \big( 4C_\nu^{(1)} + 4C_v^{(1)} + C_\phi^{(1)} \big) + C_\nu^{(2)} + C_\nu^{(1)}C_v^{(1)} + C_v^{(2)} + C^{(2)}_\phi.
\label{Cn2}
\end{equation}

\noindent This constant depends only on $n$. By induction, we can see that the $s'$-th derivatives of $U_j$ and $U_j^{-1}$ are bounded by a constant $C_{n}^{(s')} >0$ that depends only on $n$ and $s'$. This proves the statement. 
\end{proof}

\begin{Corollary}
Let $K(\alpha)$ be a $\mathcal{C}^{r,r'}$ billiard deformation in $\mathbb{R}^n$ with $r\geq 3, r' \geq 1$. Let ${V}_{j}^*$ and ${V}_{j}$ be as in {\em{(\ref{V})}} and {\em{(\ref{V*})}}. Then ${V}_{j}^*$ and ${V}_{j}$ are at least $\mathcal{C}^{s'}$, where $s'=\min\{r-2,r'\}$ and there exist constants $C_{{n}}^{(s')} >0$ depending only on $n$ and $s'$ such that 

$$\Big  \| \odv[order=s']{{V}_{j}^*}{\alpha} \Big \| \leq C_{{n}}^{(s')}         \, \, \, \,   \, \, \  {{and}} \, \, \, \,  \, \, \  \Big  \| \odv[order=s']{{V}_{j}}{\alpha} \Big \| \leq C_{{n}}^{(s')} .$$
\label{DV*}
\end{Corollary}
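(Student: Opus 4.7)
The plan is to run the same argument as in Corollary \ref{DU} for $U_j, U_j^{-1}$, since the operators $V_j, V_j^*$ have the same structural form. First I would unpack (\ref{V}) and (\ref{V*}) into the matrix expressions
$$V_j = I - \frac{v_j\, \nu_j^{\intercal}}{\cos \phi_j}, \qquad V_j^* = I - \frac{\nu_j\, v_j^{\intercal}}{\cos \phi_j},$$
so that $V_j$ and $V_j^*$ are built from the same three ingredients $v_j(\alpha)$, $\nu_j(\alpha)$ and $\cos\phi_j(\alpha)$ that appeared in the treatment of $U_j, U_j^{-1}$.

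Next I would invoke Corollaries \ref{Dv}, \ref{Dnu1} and \ref{Dphi} to conclude that each of $v_j$, $\nu_j$, $\cos\phi_j$ is $\mathcal{C}^{\min\{r-2,r'\}}$ in $\alpha$ with all derivatives up to order $s'$ bounded by constants $C_v^{(\ell)}, C_\nu^{(\ell)}, C_\phi^{(\ell)}$ independent of $j$ and $\alpha$. Since $\cos\phi_j \geq \cos\phi_{\max} > 0$ uniformly, the reciprocal $1/\cos\phi_j$ and its derivatives are likewise uniformly bounded via the standard Fa\`a di Bruno expansion. Hence $V_j$ and $V_j^*$ are $\mathcal{C}^{s'}$ with $s' = \min\{r-2, r'\}$.

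For the explicit bounds on the first two derivatives I would apply the Leibniz rule to the products $v_j\nu_j^{\intercal}/\cos\phi_j$ and $\nu_j v_j^{\intercal}/\cos\phi_j$. Because swapping the order of $v_j$ and $\nu_j$ in the outer product does not affect operator norms, the resulting estimates coincide with (\ref{Cn1}) and (\ref{Cn2}):
$$\Big\|\odv{V_j}{\alpha}\Big\| , \Big\|\odv{V_j^*}{\alpha}\Big\| \leq \frac{C_\nu^{(1)} + C_v^{(1)}}{\cos\phi_{\max}} = C_n^{(1)},$$
with an analogous $C_n^{(2)}$ at order two. An induction on $s'$, identical to the one in Corollary \ref{DU}, then gives the uniform bound $C_n^{(s')}$ at every order $s' \leq \min\{r-2,r'\}$.

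There is essentially no obstacle here: the proof is a verbatim transcription of the argument for $U_j, U_j^{-1}$, because $V_j$ and $V_j^*$ differ from $U_j, U_j^{-1}$ only by the particular pair of unit vectors appearing in the rank-one correction, and all three ingredients involved have already been shown to be $\mathcal{C}^{s'}$ with uniform bounds. The only mild point worth checking is that $\cos\phi_j$ stays bounded away from zero on the non-wandering set, which follows from the no-eclipse condition \textbf{(H)} exactly as in the $U_j$ case.
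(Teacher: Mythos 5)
Your proposal is correct and follows essentially the same route as the paper: the paper likewise rewrites $V_j = I - v_j\nu_j^{\intercal}/\cos\phi_j$ and $V_j^* = I - \nu_j v_j^{\intercal}/\cos\phi_j$ and then simply declares the rest of the argument identical to that of Corollary \ref{DU}, which is exactly the reduction you carry out (with somewhat more detail on the Leibniz/induction step than the paper bothers to give).
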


\begin{proof}

From (\ref{V}) and (\ref{V*}), we can write $$V = I - \frac{ v_j \nu_j^{\intercal}} {\cos \phi_j} \, \, \, \, \, {\text{and}}\, \, \, \, \,  V^* = I - \frac{ \nu_j v_j^{\intercal}} {\cos \phi_j}.$$ Then, the rest of the proof is the same as the proof of Corollary \ref{DU}. 
\end{proof}

\bigskip

The next Corollary follows from Corollaries {\ref{Dphi}},  {\ref{DV*}} and {\ref{DN}}. 

\begin{Corollary}
Let $K(\alpha)$ be a $\mathcal{C}^{r,r'}$ billiard deformation in $\mathbb{R}^n$ with $r\geq 4, r' \geq 1$. Let $\Theta_j(\alpha) = {\cos\phi_j} {V}_{j}^* N_{j} {V}_{j}$. Then $\Theta_j(\alpha)$ is $\mathcal{C}^{s'}$, where $s'=\min\{r-3,r'\}$ and there exist constants $ C_{\Theta}^{(s')} >0$ depending only on $s'$ and $n$ such that 

$$\Big  \| \odv[order=s']{\Theta_j}{\alpha} \Big \| \leq C_{\Theta}^{(s')}    .$$
\label{Dtheta}
\end{Corollary}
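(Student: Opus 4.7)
The plan is to view $\Theta_j(\alpha) = \cos\phi_j(\alpha) \cdot V_j^*(\alpha) \cdot N_j(\alpha) \cdot V_j(\alpha)$ as a four-factor product of a scalar with three linear operators, and then apply the generalized Leibniz rule together with the uniform bounds already established in Corollaries \ref{Dphi}, \ref{DV*}, and \ref{DN}.

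First I would read off the regularity. Corollary \ref{Dphi} gives $\cos\phi_j \in \mathcal{C}^{\min\{r-2,r'\}}$, Corollary \ref{DV*} gives $V_j^*, V_j \in \mathcal{C}^{\min\{r-2,r'\}}$, and Corollary \ref{DN} gives $N_j \in \mathcal{C}^{\min\{r-3,r'\}}$. Since a product of $\mathcal{C}^s$ maps (scalar-times-operator and operator-times-operator) is again $\mathcal{C}^s$, the weakest factor forces $\Theta_j \in \mathcal{C}^{\min\{r-3,r'\}}$, matching the claim that $s' = \min\{r-3,r'\}$.

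Next I would expand the $s'$-th derivative via the multinomial Leibniz rule:
\begin{equation*}
\odv[order=s']{\Theta_j}{\alpha} = \sum_{a+b+c+d = s'} \frac{s'!}{a!\, b!\, c!\, d!}\, \odv[order=a]{\cos\phi_j}{\alpha}\, \odv[order=b]{V_j^*}{\alpha}\, \odv[order=c]{N_j}{\alpha}\, \odv[order=d]{V_j}{\alpha},
\end{equation*}
then take operator norms and use sub-multiplicativity. Each summand is dominated by a product $C_\phi^{(a)} C_n^{(b)} C_N^{(c)} C_n^{(d)}$ of constants drawn from the cited Corollaries; when any of $a,b,c,d$ is zero, the corresponding factor is replaced by its uniform sup-norm bound, which exists since $|\cos\phi_j| \leq 1$, $\|N_j\|$ is bounded by the maximum principal curvature $\kappa_{\max}$, and $\|V_j^*\|, \|V_j\|$ are bounded in terms of $\cos\phi_{\max}$ via their explicit formulas in Section \ref{unstable}. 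All constants are independent of $j$ and $\alpha$, and the number of multi-indices of sum $s'$ depends only on $s'$; summing produces the desired constant $C_\Theta^{(s')}$ depending only on $n$ and $s'$.

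I do not anticipate any substantive obstacle, as this is a formal consequence of the preceding Corollaries. The only care needed is that $V_j^*$, $N_j$, $V_j$ are operators and do not in general commute, so the product must be written in its fixed order above; sub-multiplicativity of the operator norm makes this irrelevant for the estimates, so the argument reduces to routine bookkeeping of the multinomial expansion.
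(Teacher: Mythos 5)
Your proposal is correct and follows essentially the same route as the paper, which gives no written proof for this corollary and simply asserts that it follows from Corollaries \ref{Dphi}, \ref{DV*} and \ref{DN}; your Leibniz-rule expansion with sub-multiplicativity of the operator norm is exactly the bookkeeping the paper leaves implicit. The only detail worth keeping explicit is the one you already flagged: the zeroth-order factors need uniform sup-norm bounds ($|\cos\phi_j|\leq 1$, $\|N_j\|\leq\kappa_{\max}$, and $\|V_j\|,\|V_j^*\|$ controlled by $1/\cos\phi_{\max}$), all of which are available from Section \ref{unstable}.
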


\bigskip

In the next Corollary, we want to show that $\hat{w}_j(\alpha) \in T_{q_j}(X_j(\alpha))$, such that \newline $\hat{w}_j = w_j / \| w_j \|$ and
\begin{equation}
w_j= \tilde{u}_j - \frac {\langle \tilde{u}_j , v_j \rangle } {\cos \phi_j} \nu_j,
\label{W}
\end{equation}

\noindent where $\tilde{u}_j$ is as in (\ref{tildeu}).

\begin{Corollary}
Let $K(\alpha)$ be a $\mathcal{C}^{r,r'}$ billiard deformation in $\mathbb{R}^n$ with $r\geq 4, r' \geq 1, n \geq 3$. 
For all $j= 1,2, ...m$, let ${w}_j(\alpha)$ be as in {\em(\ref{W})}. Then $\hat{w}_j(\alpha)$ is $\mathcal{C}^{s'}$, where $s' =\min\{r-2,r'\}$, and there exist constants $C_{\hat{w}}^{(s')} >0$ which are independent of $s', j$ and $\alpha$ such that 
$$\Big  \| \odv[order=s']{\hat{w}_j}{\alpha} \Big \| \leq C_{\hat{w}}^{(s')}.$$

\label{hatw}
\end{Corollary}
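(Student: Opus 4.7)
The strategy is to prove this in two stages: first control the unnormalised vector $w_j(\alpha) = \tilde{u}_j(\alpha) - \frac{\langle \tilde{u}_j(\alpha), v_j(\alpha) \rangle}{\cos\phi_j(\alpha)} \nu_j(\alpha)$, and then pass to the unit vector $\hat{w}_j = w_j/\|w_j\|$ via a quotient bound. All of the inputs appearing in the formula for $w_j$ have already been dealt with: by Corollaries \ref{Dv}, \ref{Dnu1}, \ref{Dphi}, and \ref{Dtildeu}, each of $v_j(\alpha)$, $\nu_j(\alpha)$, $\cos\phi_j(\alpha)$ and $\tilde{u}_j(\alpha)$ is $\mathcal{C}^{s'}$ in $\alpha$ with bounded derivatives (up to order $s' = \min\{r-2, r'\}$), by constants that are independent of $j$ and $\alpha$.

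First I would differentiate $w_j$ using the Leibniz rule. The scalar factor $1/\cos\phi_j(\alpha)$ is $\mathcal{C}^{s'}$ and bounded by $1/\cos\phi_{\max}$, exactly as already used in Corollary \ref{DU} to produce the constant $C_n^{(1)}$ in (\ref{Cn1}). The inner product $\langle \tilde{u}_j, v_j\rangle$ and its successive derivatives are combinations of $\tilde{u}_j^{(\ell)}$ and $v_j^{(\ell)}$ with bounded norms, so the Faà di Bruno-type expansion for the quotient $\langle \tilde{u}_j, v_j\rangle / \cos\phi_j$ gives a bound depending only on $s'$, $n$, $C_{\tilde u}^{(\ell)}$, $C_v^{(\ell)}$, and $C_\phi^{(\ell)}$ for $\ell \le s'$. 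Multiplying by $\nu_j(\alpha)$ and subtracting from $\tilde{u}_j(\alpha)$ yields constants $C_w^{(s')} > 0$, depending only on $n$ and $s'$, with $\|\partial^{s'} w_j / \partial\alpha^{s'}\| \le C_w^{(s')}$.

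The second stage is to pass to $\hat{w}_j$. Here the key point — and the only genuinely new observation — is that $\|w_j(\alpha)\|$ is bounded below uniformly in $j$ and $\alpha$. Because $\tilde{u}_j$ lies in the tangent hyperplane of $\partial K_{\xi_j}$ at $q_j$ and the parametrization $\varphi_j$ is orthonormal, we have $\langle \tilde{u}_j, \nu_j\rangle = 0$ and $\|\tilde{u}_j\| = 1$, so
\begin{equation*}
\|w_j\|^2 = \|\tilde{u}_j\|^2 + \frac{\langle \tilde{u}_j, v_j\rangle^2}{\cos^2\phi_j} - 2\frac{\langle \tilde{u}_j, v_j\rangle}{\cos\phi_j}\langle \tilde{u}_j, \nu_j\rangle = 1 + \frac{\langle \tilde{u}_j, v_j\rangle^2}{\cos^2\phi_j} \ge 1.
\end{equation*}
Thus $\|w_j\|^{-1} \le 1$ and $\|w_j\|^2$ is $\mathcal{C}^{s'}$ and bounded above, so $\|w_j\|$ and its reciprocal are $\mathcal{C}^{s'}$ with derivatives controlled by a standard Faà di Bruno argument applied to $t \mapsto t^{1/2}$ and $t \mapsto 1/t$ on the interval $[1, M]$, where $M$ is an upper bound for $\|w_j\|^2$.

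Finally, writing $\hat{w}_j = w_j \cdot \|w_j\|^{-1}$ and applying the Leibniz rule yields constants $C_{\hat w}^{(s')} > 0$ with $\|\partial^{s'}\hat{w}_j/\partial\alpha^{s'}\| \le C_{\hat w}^{(s')}$, depending only on $s'$ and $n$; induction on $s'$ proceeds exactly as in the previous corollaries. The only place where the argument could have broken down is the uniform non-vanishing of $\|w_j\|$, and that is handled once and for all by the orthogonality identity above, which is why I expect the proof to be short and to follow the pattern already used for Corollaries \ref{DU} and \ref{DV*}.
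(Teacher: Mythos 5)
Your proposal is correct and follows essentially the same route as the paper's proof: both reduce the claim to Corollaries \ref{Dv}, \ref{Dnu1}, \ref{Dphi} and \ref{Dtildeu} and then apply the Leibniz/quotient rules and induction on the order of the derivative. The one point you make explicit that the paper leaves implicit is the uniform lower bound $\|w_j\| \geq \|\tilde{u}_j\| = 1$, obtained from $\langle \tilde{u}_j, \nu_j\rangle = 0$ and the orthonormality of the parametrisation $\varphi_j$, which is precisely what makes the passage from $w_j$ to $\hat{w}_j = w_j/\|w_j\|$ harmless.
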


\begin{proof}

Let $w_j(\alpha) \in \mathcal{J}_j$ as in (\ref{W}), where $\mathcal{J}_j$ is the hyperplane to the convex front $\widetilde{X}_j$ at the point $q_j(\alpha)$. 
From Corollaries \ref{Dv}, \ref{Dnu1} and \ref{Dtildeu}, $\hat{w}_j$ is at least $\mathcal{C}^{\min \{r-2, r'\}}$ with respect to $\alpha$, and there exists a constant $C_{\hat{w}}^{(1)} >0$ such that 

$$\Big \| \odv{\hat{w}_j}{\alpha} \Big \| \leq \frac{2}{(1+ \cos \phi^+) \cos^2\phi^+ } \Big (2 C_{\tilde{u}}^{(1)} + C_v^{(1)} + C_\nu^{(1)} + C_\phi^{(1)} \Big ) = C_{\hat{w}}^{(1)} .$$

\noindent This constant depends only on $n$. By induction, we can see that the $s'$-th derivative of $\hat{w}_j$ with respect to $\alpha$ is bounded by a constant depends only on $s'$ and $n$. This proves the statement.  
\end{proof}

\begin{Corollary}
Let $K(\alpha)$ be a $\mathcal{C}^{r,r'}$ billiard deformation in $\mathbb{R}^n$ with $r\geq 4, r' \geq 1$. 
Then for all $j =1,2,..., m$, $\mathcal{K}_j(\alpha)$ is $\mathcal{C}^{s'}$, where $s=\min\{r-3,r'\}$ and there exists a constant $C_{\mathcal{K}}^{(s')} >0$ which depends on $s'$ and $n$ such that 
$$\Big  \| \odv[order=s']{\mathcal{K}_j(\alpha)}{\alpha} \Big \| \leq C_{\mathcal{K}}^{(s')}.$$
\label{DKK2}
\end{Corollary}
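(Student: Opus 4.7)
The plan is to differentiate the recursive formula (\ref{K3}) for $\mathcal{K}_j(\alpha)$ and exploit the contractive nature of convex front propagation to close a uniform recursion in $j$. The $\mathcal{C}^{s'}$ regularity (with $s' = \min\{r-3, r'\}$) is inherited from the constituents: $U_j^{\pm 1}$ and $d_{j-1}$ are $\mathcal{C}^{\min\{r-2,r'\}}$ while $\Theta_j = \cos\phi_j\, V_j^* N_j V_j$ is only $\mathcal{C}^{\min\{r-3,r'\}}$ because the shape operator $N_j$ costs one extra order (Corollary \ref{Dtheta}); this is the binding constraint. Since (\ref{eta}) guarantees $\mathcal{K}_{j-1}$ is uniformly positive definite, the matrix inversion $\mathcal{K}_{j-1} \mapsto (\mathcal{K}_{j-1}^{-1} + d_{j-1} I)^{-1}$ is smooth, so $\mathcal{C}^{s'}$-smoothness of $\mathcal{K}_j$ follows by induction on $j$, using that $\mathcal{K}_0$ is $\alpha$-independent as the base case.

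For the uniform bound on $\mathcal{K}_j'$, set $P_{j-1} = (I + d_{j-1}\mathcal{K}_{j-1})^{-1}$ and $A_j = \mathcal{K}_{j-1} P_{j-1} = (\mathcal{K}_{j-1}^{-1} + d_{j-1} I)^{-1}$. Applying the identity $(X^{-1})' = -X^{-1} X' X^{-1}$ to $A_j^{-1} = \mathcal{K}_{j-1}^{-1} + d_{j-1}I$ and using the symmetry of $\mathcal{K}_{j-1}$ to simplify $A_j \mathcal{K}_{j-1}^{-1} = P_{j-1}$, one obtains
\begin{equation*}
A_j' = P_{j-1}\, \mathcal{K}_{j-1}'\, P_{j-1} - d_{j-1}'\, A_j^2.
\end{equation*}
Differentiating (\ref{K3}) then gives
\begin{equation*}
\mathcal{K}_j' = U_j^{-1}\, P_{j-1}\, \mathcal{K}_{j-1}'\, P_{j-1}\, U_j + R_j,
\end{equation*}
where $R_j = (U_j^{-1})' A_j U_j + U_j^{-1} A_j U_j' - d_{j-1}'\, U_j^{-1} A_j^2 U_j + 2\Theta_j'$ is bounded uniformly in $j$ and $\alpha$ by Corollaries \ref{DU}, \ref{Dd}, \ref{Dtheta} together with $\|A_j\| \leq \eta_{\max}$. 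Since $\|P_{j-1}\| \leq (1 + d_{\min}\mu_{\min})^{-1} < 1$ and $\mathcal{K}_0' = 0$, this recursion yields $\|\mathcal{K}_j'\| \leq C_{\mathcal{K}}^{(1)}$ uniformly in $j$ and $\alpha$.

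The main obstacle is verifying that the leading linear map $H \mapsto U_j^{-1} P_{j-1} H P_{j-1} U_j$ has operator norm strictly less than $1$, uniformly in $j$: the conjugation by $U_j^{\pm 1}$ contributes factors of order $\sec\phi_{\max}$, so a naive product bound need not be contracting. The remedy is to realise $U_j^{-1}(\cdot)U_j$ as a change of base hyperplane from $\mathcal{J}_j^+$ to $\mathcal{J}_j^-$, which preserves spectral structure, and to combine this with the genuine spectral contraction $\|P_{j-1}\|^2 < 1$ from (\ref{eta}) supplied by the no-eclipse condition \textbf{(H)}. For higher-order derivatives with $s' \geq 2$, I would proceed by induction on $s'$: applying the Leibniz rule to the same recursion produces a leading term $U_j^{-1} P_{j-1}\, \mathcal{K}_{j-1}^{(s')}\, P_{j-1}\, U_j$ with the same contraction factor, together with remainder terms involving only strictly lower-order derivatives of $\mathcal{K}_{j-1}$ (bounded by the inductive hypothesis) and derivatives of $U_j$, $d_j$, $\Theta_j$ (bounded by the prior corollaries). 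This closes the induction up to $s' = \min\{r-3, r'\}$, giving the constant $C_{\mathcal{K}}^{(s')}$ depending only on $s'$ and $n$.
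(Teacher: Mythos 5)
Your proposal follows essentially the same route as the paper: differentiate the recursion (\ref{K3}) via $(X^{-1})' = -X^{-1}X'X^{-1}$ to obtain $\mathcal{K}_j' = \widetilde{\mathcal{P}}_j\,\mathcal{K}_{j-1}'\,\mathcal{P}_j + \mathcal{R}_j$ with $\widetilde{\mathcal{P}}_j = U_j^{-1}(I+d_{j-1}\mathcal{K}_{j-1})^{-1}$, iterate down to $\mathcal{K}_0'=0$, bound the remainder terms by the earlier corollaries, and sum the resulting geometric series in $\mathcal{D}_{\max} = (1+d_{\min}\mu_{\min})^{-1}<1$. The only place you go beyond the paper is in flagging that conjugation by $U_j^{\pm1}$ costs factors of order $\sec\phi_{\max}$ that could spoil the contraction; the paper simply asserts $\|\widetilde{\mathcal{P}}_j\|,\|\mathcal{P}_j\|\leq\mathcal{D}_{\max}$ without comment, so your concern and proposed spectral remedy are a refinement of, not a departure from, its argument.
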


\begin{proof}

From (\ref{K2}), recall that 

$$\mathcal{K}_{j} (\alpha) = U_j^{-1} \mathcal{K}^{-}_j U_j + 2 \Theta_{j} ,$$

\noindent where $\mathcal{K}^{-}_{j} (\alpha) = \mathcal{K}_{j-1}(I +d_{j-1}\mathcal{K}_{j-1} )^{-1}$. 
Given that every term in the right-hand side of the equation is differentiable with respect to $\alpha$, and we can derive $\mathcal{K}_{j-1}$ in terms of $\mathcal{K}_0$ on the right-hand side, where $\mathcal{K}_0$ is independent of $\alpha$, we can conclude that $\mathcal{K}_j$ is differentiable with respect to $\alpha$. Specifically, $\mathcal{K}_j$ is at least $\mathcal{C}^{\min \{r-3,r'\} }$, where $r\geq 4$ and $r' \geq 1$. This result follows from the Corollaries \ref{Dtheta} and \ref{DU}.

\bigskip

 Next, we want to show that the first dervative of $\mathcal{K}_j$ with respect to $\alpha$ is bounded by a constant independent of $\alpha$ and $j$.  First, we have

\begin{equation}
\begin{aligned}[b]
\odv{\mathcal{K}^{-}_{j}}{\alpha} &=  \odv{\mathcal{K}_{{j-1}}}{\alpha}  (I + d_{j-1} \mathcal{K}_{j-1})^{-1} -(I + d_{j-1} \mathcal{K}_{j-1})^{-1} ( d_{j-1} \odv{\mathcal{K}_{j-1}}{\alpha}  + \odv{d_{j-1}}{\alpha} \mathcal{K}_{j-1}) (I + d_{j-1} \mathcal{K}_{j-1})^{-1} \\
&=  \Big [ \Big ( I - d_{j-1} \mathcal{K}_{j-1} (I + d_{j-1} \mathcal{K}_{j-1})^{-1} \Big ) \odv{\mathcal{K}_{j-1}}{\alpha} \Big ]  (I + d_{j-1} \mathcal{K}_{j-1})^{-1}\\ 
& \, \, \, \, \, \, \, \, \, \,   -(I + d_{j-1} \mathcal{K}_{j-1})^{-1} \odv{d_{j-1}}{\alpha} \mathcal{K}^{2}_{j-1} (I + d_{j-1} \mathcal{K}_{j-1})^{-1} \\ 
&= (I + d_{j-1} \mathcal{K}_{j-1})^{-1}  \odv{\mathcal{K}_{j-1}}{\alpha}  (I + d_{j-1} \mathcal{K}_{j-1})^{-1} -(I + d_{j-1} \mathcal{K}_{j-1})^{-1} \odv{d_{j-1}}{\alpha} \mathcal{K}^{2}_{j-1} (I + d_{j-1} \mathcal{K}_{j-1})^{-1}\\ 
& = \mathcal{D}_{j-1} \odv{\mathcal{K}_{j-1}}{\alpha} \mathcal{D}_{j-1} + \mathcal{E}_{{j-1}}
\label{k-1}
\end{aligned}
\end{equation}

\noindent where $\mathcal{D}_{j-1}(\alpha) = (I + d_{j-1} \mathcal{K}_{j-1})^{-1}$ and $\mathcal{E}_{{j-1}} =  - \odv{d_{j-1}}{\alpha}  \mathcal{D}_{j-1} \mathcal{K}^2_{j-1}  \mathcal{D}_{j-1}$ .  Second, we derive 

\begin{align*}
\odv{\mathcal{K} _j}{\alpha} & = U_{j}^{-1} \odv{\mathcal{K}^{-}_{j-1}}{\alpha} U_{j} - U_{j}^{-1} \odv{U_{j}^{-1}}{\alpha} U_{j}^{-1} \mathcal{K}_{j-1} U_j + U_{j}^{-1} \mathcal{K}^{-}_{j-1} \odv{U_{j}}{\alpha} + 2 \odv{\Theta_{j}}{\alpha}.
\end{align*}

\noindent From (\ref{k-1}), we get

\begin{align*}
\odv{\mathcal{K} _j}{\alpha} & =  U^{-1}_j \mathcal{D}_{j-1}\odv{\mathcal{K}_{j-1}}{\alpha} \mathcal{D}_{j-1} U_j + U^{-1}_j \mathcal{E}_{j-1} U_j -  \odv{U_{j-1}^{-1}}{\alpha} \mathcal{K}_{j-1}  +  \mathcal{K}^{-}_{j-1} \odv{U_{j-1}}{\alpha} + 2 \odv{\Theta_{j}}{\alpha}.
\end{align*}

\noindent Let $\widetilde{\mathcal{P}}_j = U^{-1}_j \mathcal{D}_{j-1}$,  $\mathcal{P}_j = \mathcal{D}_{j-1} U_j $, and $\mathcal{R}_j =  U^{-1}_j \mathcal{E}_{{j-1}} U_j -  \odv{U_{j}^{-1}}{\alpha} \mathcal{K}_{j-1}  +  \mathcal{K}^{-}_{j-1} \odv{U_{j}}{\alpha} + 2 \odv{\Theta_{j}}{\alpha}$. So, we can write

\begin{align*}
\odv{\mathcal{K} _j}{\alpha} & =  \widetilde{\mathcal{P}}_{j} \odv{\mathcal{K}_{j-1}}{\alpha} \mathcal{P}_{j} + \mathcal{R}_j.
\end{align*}


\noindent To get the estimation, we have

\begin{align*}
\odv{\mathcal{K} _j}{\alpha} & 
%
= \widetilde{\mathcal{P}}_{j} \Big [ \widetilde{\mathcal{P}}_{j-1} \odv{\mathcal{K}_{j-2}}{\alpha} \mathcal{P}_{j-1} + \mathcal{R}_{j-1} \Big ] \mathcal{P}_{j} + \mathcal{R}_{j}\\
& = \widetilde{\mathcal{P}}_{j} \Big [ \widetilde{\mathcal{P}}_{j-1} \Big ( \widetilde{\mathcal{P}}_{j-2} \odv{\mathcal{K}_{j-3}}{\alpha} \mathcal{P}_{j-2} + \mathcal{R}_{j-2} \Big ) \mathcal{P}_{j} + \mathcal{R}_{j-1} \Big ] \mathcal{P}_{j-1} + \mathcal{R}_{j}\\
&=\widetilde{\mathcal{P}}_{j} \widetilde{\mathcal{P}}_{j-1} \, \ldots\, \widetilde{\mathcal{P}}_1 \odv{\mathcal{K}_{0}}{\alpha} \mathcal{P}_1 \, \ldots\, \mathcal{P}_{j-1} \mathcal{P}_{j} + \widetilde{\mathcal{P}}_{j}\mathcal{R}_{j-1} \mathcal{P}_{j}+ \ldots+ \widetilde{\mathcal{P}}_{j} \widetilde{\mathcal{P}}_{j-1}  \ldots \widetilde{\mathcal{P}}_2 \mathcal{R}_{1} \mathcal{P}_2 \, \ldots \, \mathcal{P}_{j-1} \mathcal{P}_{j} +  \mathcal{R}_{j}.\\
\end{align*}

\noindent Since $\mathcal{K}_{0}$ is independent of $\alpha$
, then 

\begin{align*}
\odv{\mathcal{K} _j}{\alpha} =   \widetilde{\mathcal{P}}_{j}\mathcal{R}_{j-1} \mathcal{P}_{j}+ \ldots+ \widetilde{\mathcal{P}}_{j} \, \ldots \,  \widetilde{\mathcal{P}}_2 \mathcal{R}_{1} \mathcal{P}_2 \, \ldots \, \mathcal{P}_{j} +  \mathcal{R}_{j}.
\end{align*}

From (\ref{eta}) and Corollaries \ref{Dd}, \ref{Dtheta} and \ref{DU},  we have $\| \mathcal{P}_j \| \leq \mathcal{D}_{\max}$ and $\| \widetilde{\mathcal{P}}_j \| \leq \mathcal{D}_{\max}$ , where $\mathcal{D}_{\max} = \frac{1}{1+d_{\min} \mu_{\min}}$, and  

$$\|\mathcal{R}_j \| \leq \mathcal{R}_{\max} = (\mathcal{D}_{\max})^2 C_d^{(1)} (\eta_{\max})^2 + C_w^{(1)} \eta_{\max} + \frac{C_w^{(1)} }{d_{\min} }+2 C_{\Theta}^{(1)}.$$

\noindent And then

\begin{align*}
\Big \| \odv{\mathcal{K}^{}_{j}}{\alpha} \Big \| & \leq   \Big [ 1 + (\mathcal{D}_{\max} )^2 + (\mathcal{D}_{\max} )^4 + ... + (\mathcal{D}_{\max} )^{2(j-1)} \Big ] \mathcal{R}_{\max}  \\
& =  \frac {(1 - (\mathcal{D}_{\max} )^{2j} }{1-({\mathcal{D}_{\max}})^{2}} \mathcal{R}_{\max} .   
\end{align*}

\noindent For a large $j$, $(\mathcal{D}_{\max} )^{2j}$ converges to $0$. Thus, there exists a constant $C_{\mathcal{K}^{}}^{(1)} > 0$ independent of $j$ and $\alpha$, such that $$\Big \| \odv{\mathcal{K}^{}_{j}}{\alpha} \Big \|  \leq C_{\mathcal{K}^{}}^{(1)}.$$

By using the same approach we can see that the $s'$-th derivative of $\mathcal{K}_j$ is bounded by a constant $C_{\mathcal{K}^{}}^{(s')} > 0$, which depends only on $n$ and $s'$. 
\end{proof}

\bigskip

The next Corollary follows from Corollaries  \ref{hatw} and \ref{DKK2}.   
  
\begin{Corollary}
Let $K(\alpha)$ be a $\mathcal{C}^{r,r'}$ billiard deformation in $\mathbb{R}^n$ with $r\geq 4, r' \geq 1$. Let $k_j(\alpha)$ be the normal curvature of the unstable manifold $\widetilde{X}_j$ at the point $q_j(\alpha)$ in the direction $ \hat{w}_j \in T_{q_j} (\widetilde{X}_j)$, which is given by $k_j(\alpha) = \langle \mathcal{K}_j (\hat{w}_j), \hat{w}_j \rangle$ . Then $k_j(\alpha)$ is $\mathcal{C}^{s'}$, where $s'=\min\{r-3,r'\}$ and there exist constants $C_k^{(s')} >0$ which depend only on $s'$ and $n$ such that 
$$\Big  | \odv[order=s']{k_j}{\alpha} \Big | \leq C_k^{(s')}.$$
\label{Dkappa}
\end{Corollary}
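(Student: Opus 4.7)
The proof is essentially immediate from the two cited corollaries, so the plan is to set it up and then invoke them. I would first rewrite $k_j(\alpha) = \langle \mathcal{K}_j(\alpha)\hat{w}_j(\alpha), \hat{w}_j(\alpha)\rangle$ and observe that it is a smooth algebraic combination (composition, inner product, bilinear evaluation) of the two ingredients $\mathcal{K}_j(\alpha)$ and $\hat{w}_j(\alpha)$. Since Corollary \ref{DKK2} gives $\mathcal{K}_j \in \mathcal{C}^{\min\{r-3,r'\}}$ and Corollary \ref{hatw} gives $\hat{w}_j \in \mathcal{C}^{\min\{r-2,r'\}}$, the regularity of $k_j$ is the minimum of these two, namely $s'=\min\{r-3,r'\}$, as claimed.

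Next, I would apply the generalised Leibniz rule to
\[
\odv[order=s']{k_j}{\alpha} \;=\; \odv[order=s']{}{\alpha}\langle \mathcal{K}_j\hat{w}_j,\hat{w}_j\rangle,
\]
expanding it as a finite sum over multi-indices $(a,b,c)$ with $a+b+c=s'$ of terms of the shape
\[
\Big\langle \odv[order=a]{\mathcal{K}_j}{\alpha}\odv[order=b]{\hat{w}_j}{\alpha},\; \odv[order=c]{\hat{w}_j}{\alpha}\Big\rangle,
\]
each weighted by a fixed multinomial coefficient depending only on $s'$. The number of such terms depends only on $s'$, not on $j$ or $\alpha$.

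Now I would bound each term using Cauchy--Schwarz and the uniform bounds supplied by the two previous corollaries: $\|\partial^a_\alpha \mathcal{K}_j\|\le C_{\mathcal{K}}^{(a)}$ for $a\le s'$ and $\|\partial^b_\alpha \hat{w}_j\|\le C_{\hat{w}}^{(b)}$ for $b\le s'$ (these include the zeroth-order bounds $\|\mathcal{K}_j\|\le \eta_{\max}$ from (\ref{eta}) and $\|\hat{w}_j\|=1$). Summing the finitely many such bounded terms produces a constant
\[
C_k^{(s')} \;=\; \sum_{a+b+c=s'} \binom{s'}{a,b,c}\, C_{\mathcal{K}}^{(a)}\,C_{\hat{w}}^{(b)}\,C_{\hat{w}}^{(c)},
\]
which depends only on $s'$ and $n$, and which bounds $\bigl|\partial^{s'}_\alpha k_j\bigr|$ uniformly in $j$ and $\alpha$, completing the proof.

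There is no real obstacle here: the only thing to watch is that the available regularity of $\mathcal{K}_j$ ($\min\{r-3,r'\}$) is strictly smaller than that of $\hat{w}_j$ ($\min\{r-2,r'\}$), so the bottleneck sets $s'=\min\{r-3,r'\}$ and no derivative of $\hat{w}_j$ of order higher than $s'$ is ever needed. The boundedness step is formal once the multilinear Leibniz expansion is written down.
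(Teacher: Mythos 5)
Your proposal is correct and matches the paper's intent exactly: the paper gives no written proof for this corollary, stating only that it ``follows from Corollaries \ref{hatw} and \ref{DKK2},'' and your Leibniz-plus-Cauchy--Schwarz expansion with the uniform bounds $\|\partial^a_\alpha \mathcal{K}_j\|\le C_{\mathcal{K}}^{(a)}$ and $\|\partial^b_\alpha \hat{w}_j\|\le C_{\hat{w}}^{(b)}$ is precisely the argument being elided. Your observation that the regularity bottleneck is $\mathcal{K}_j$ (order $\min\{r-3,r'\}$ versus $\min\{r-2,r'\}$ for $\hat{w}_j$) correctly accounts for the stated exponent $s'=\min\{r-3,r'\}$.
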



\begin{Corollary}
Let $K(\alpha)$ be a $\mathcal{C}^{r,r'}$ billiard deformation in $\mathbb{R}^n$ with $r\geq 4, r' \geq 1$. Let $\ell_j(\alpha)$ be as defined in {\em{(\ref{L 2})}}. Then $\ell_j(\alpha)$ is $\mathcal{C}^{s'=\min\{r-3,r'\}}$ and there exist constants $C_{\ell}^{(s')} >0$ which depend on $s'$ and $n$ such that 
$$\Big  | \odv[order=s']{\ell_j}{\alpha} \Big | \leq C_{\ell}^{(s')}.$$
\label{Dell}
\end{Corollary}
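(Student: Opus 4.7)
The plan is to solve equation (\ref{L 3}) explicitly for $\ell_j(\alpha)$ and then apply the chain and Leibniz rules, invoking the uniform bounds on the derivatives of $d_j$, $k_j$, $\mathcal{K}_j$ and $\hat{w}_j$ already established in the preceding corollaries. Specifically, taking the convention $s=0$ of Section \ref{prop 1}, equation (\ref{L 3}) yields
\begin{equation*}
\ell_j(\alpha) \;=\; \frac{\sqrt{\,1 + 2\, d_j(\alpha)\, k_j(\alpha) + d_j(\alpha)^2\, \|\mathcal{K}_j(\alpha)\, \hat{w}_j(\alpha)\|^2\,} - 1}{d_j(\alpha)}.
\end{equation*}
Every ingredient of the numerator has already been shown to be $\mathcal{C}^{\min\{r-3,r'\}}$ in $\alpha$ with uniformly bounded derivatives: $d_j$ by Corollary \ref{Dd}, the normal curvature $k_j$ by the preceding corollary on $\langle \mathcal{K}_j \hat{w}_j, \hat{w}_j\rangle$, the curvature operator $\mathcal{K}_j$ by Corollary \ref{DKK2}, and the unit direction $\hat{w}_j$ by Corollary \ref{hatw}. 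Since the map $(A,\hat{w})\mapsto \|A\hat{w}\|^2 = \langle A\hat{w},\, A\hat{w}\rangle$ is polynomial, the composition $\|\mathcal{K}_j(\alpha)\, \hat{w}_j(\alpha)\|^2$ inherits the same regularity and uniform bounds on its derivatives.

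Next I would verify that the radicand $R_j(\alpha) := 1 + 2 d_j k_j + d_j^2 \|\mathcal{K}_j \hat{w}_j\|^2$ is bounded above and, crucially, bounded below uniformly away from zero. By (\ref{L 3}) we have $R_j(\alpha) = \bigl(1 + d_j(\alpha)\ell_j(\alpha)\bigr)^2$, and the two-sided estimates in (\ref{ell1}) and the subsequent inequality give
\begin{equation*}
1 \;\le\; 1 + d_{\min}\mu_{\min} \;\le\; \sqrt{R_j(\alpha)} \;\le\; 1 + d_{\max}\eta_{\max},
\end{equation*}
independently of $j$ and $\alpha$. Consequently $t\mapsto \sqrt{t}$ is $\mathcal{C}^\infty$ on the compact interval containing $R_j(\alpha)$, and all of its derivatives up to order $s'$ are uniformly bounded there. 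Dividing by $d_j$ is likewise a smooth operation with bounded derivatives thanks to the uniform positive lower bound $d_j \ge d_{\min} > 0$ furnished by Ikawa's no-eclipse condition \textbf{(H)}.

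Combining these observations via the Leibniz and chain rules, and proceeding inductively in $s'$ exactly in the style of the previous corollaries in Section \ref{derivative}, yields that $\ell_j(\alpha)$ is $\mathcal{C}^{s'}$ with $s'=\min\{r-3,r'\}$ and
\begin{equation*}
\left| \odv[order=s']{\ell_j(\alpha)}{\alpha} \right| \;\le\; C_\ell^{(s')},
\end{equation*}
where $C_\ell^{(s')}$ depends only on $s'$ and $n$. The regularity class is forced by the weakest link, namely $\mathcal{K}_j$ and $k_j$, each of which costs three derivatives relative to $r$. The only place where one has to be slightly careful is the uniform non-degeneracy of $R_j$ and the uniform positivity of $d_j$; both of these are immediate from the geometric a priori estimates in (\ref{eta}) and from \textbf{(H)}, so no difficulty beyond routine bookkeeping arises.
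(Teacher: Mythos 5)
Your proposal is correct and follows essentially the same route as the paper: the paper differentiates the defining relation $(1+d_j\ell_j)^2=\|\hat{w}_j+d_j\mathcal{K}_j\hat{w}_j\|^2$ implicitly, solves for $\odv{\ell_j}{\alpha}$, and bounds it using Corollaries \ref{Dd}, \ref{hatw}, \ref{DKK2} together with $d_j\ge d_{\min}$ and the estimate $1\le 1+d_j\ell_j\le 1+d_{\max}\eta_{\max}$, then proceeds by induction for higher derivatives. Your only deviation is cosmetic --- solving explicitly for $\ell_j=(\sqrt{R_j}-1)/d_j$ before differentiating --- and your explicit verification that the radicand is uniformly bounded away from zero is a point the paper uses only tacitly.
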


\begin{proof}
Recall that 
$$  (1+ d_j(\alpha) \ell_j(\alpha))^2 = \| \hat{w}_j(\alpha) + (d_j (\alpha)) \mathcal{K}_j (\hat {w}_j(\alpha)) \|^2.$$


\noindent 

\noindent From this and Corollaries \ref{Dd}, \ref{hatw} and \ref{DKK2}, $\ell_j(\alpha)$ is $\mathcal{C}^{s'}$, where $s' = \min\{r-3,r'\}$. And its first derivative with respect to $\alpha$ is 

\begin{equation*}
\begin{aligned} 
%
%
\odv{\ell_j}{\alpha} & = \frac{1}{d_j} \Big [ \frac { \hat{w}_j + d_j \mathcal{K}_j (\hat{w}_j)} { \| \hat{w}_j + d_j \mathcal{K}_j(\hat{w}_j) \|} ( \odv{}{\alpha}\hat{w}_j + \odv{}{\alpha}{d_j} \mathcal{K}_j (\hat{w}_j)+ d_j \odv{}{\alpha}{\mathcal{K}}_j(\hat{w}_j)  + d_j \mathcal{K} \odv{}{\alpha} \hat{w}_j - \odv{}{\alpha}{d}_j \ell_j \Big ].
\end{aligned}
\end{equation*}

\noindent And again by Corollaries \ref{Dd}, \ref{hatw} and \ref{DKK2}, we have 

\begin{equation*}
\begin{aligned} 
\Big | \odv{\ell_j}{\alpha} \Big | \leq \frac{1}{d_{\min}} \big( C_{w_1}^{(1)} + 2 C_d^{(1)} \eta_{\max} + d_{\max} C_{\mathcal{K}_1}^{(1)} \big ).
\end{aligned}
\end{equation*}

\noindent Then, there exists a $C_{\ell_1}^{(1)} >0 $ depending on $n$, such that $\Big | \odv{\ell_j}{\alpha} \Big | \leq C_{\ell_1}^{(1)}$. By induction, we can see that $\Big | \odv[order=s']{\ell_j}{\alpha} \Big | \leq C_{\ell_j}^{(s')}$, where $ C_{\ell_j}^{(s')}$ is constant depending only on $s'$ and $n$.  %
 \end{proof}



\section{Estimation the largest Lyapunov Exponent for open billiard deformation}

\label{1L}

For every $\alpha \in [0,b]$, let $M_\alpha$ be the non-wandering set for the billiard map related to $K(\alpha)$
and let $R_\alpha : M_\alpha \longrightarrow \Sigma$ be the analogue of the conjugacy map $R: M_0\longrightarrow \Sigma$,
so that the following diagram  is commutative:
$$\def\normalbaselines{\baselineskip20pt\lineskip3pt \lineskiplimit3pt}
\def\mapright#1{\smash{\mathop{\longrightarrow}\limits^{#1}}}
\def\mapdown#1{\Big\downarrow\rlap{$\vcenter{\hbox{$\scriptstyle#1$}}$}}
\begin{matrix}
M_\alpha &\mapright{B_{\alpha}}& M_\alpha\cr 
\mapdown{R_\alpha}& & \mapdown{R_\alpha}\cr \Sigma &\mapright{\sigma}& \Sigma
\end{matrix}
$$ 
where $B_\alpha$ is the billiard ball map on $M_\alpha$. By Theorem \ref{OSE} there exists a subset $A_\alpha$ of $\Sigma$
with $\mu(A_\alpha) = 1$ so that 
\begin{equation}
\lambda_1(\alpha) = \lim_{m\to\infty} \frac{1}{m} \log \| D_{x_0}B^m_\alpha(w)\|
\label{B_alpha}
\end{equation}
for all $x \in M_\alpha$ with $R_\alpha(x) \in A_\alpha$, and all $ w \in E^u_1(x) \oplus E^u_2(x) \oplus \dots E^u_{k}(x) \backslash E^u_2(x) \oplus E^u_3(x) \oplus \dots E^u_{k}(x)$. Similarly, let $A_0$ be the set with $\mu(A_0) = 1$
which we get from Theorem \ref{OSE} for $\alpha = 0$.

\bigskip

\begin{Lemma}
Given an arbitrary sequence 
\begin{equation*}
\alpha_1, \alpha_2, \ldots, \alpha_{p}, \ldots 
\end{equation*} 
of elements of $[0,b]$, for 
$\mu$-almost all $\xi\in \Sigma$ the formula {\rm{(\ref{B_alpha})}} is valid for $\alpha = \alpha_p$ and $x = R_{\alpha}^{-1}(\xi)$ for all $p = 1,2, \ldots$
and also for $\alpha = 0$ and $x = R^{-1}(\xi)$.

\label{alpha_tilde{k}}

\end{Lemma}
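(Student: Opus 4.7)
The plan is to apply the Oseledets theorem (Theorem \ref{OSE}) individually for each parameter value in the given sequence and then combine the resulting full-measure sets via a countable intersection argument. The only real issue is bookkeeping: the coding map $R_\alpha$ depends on $\alpha$, but the symbol space $\Sigma$ and the measure $\mu$ on it do not, which is exactly what makes the countable intersection argument work.

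More concretely, I would proceed as follows. For each $p \geq 1$, Theorem \ref{OSE} applied to the billiard map $B_{\alpha_p}$ on $M_{\alpha_p}$, conjugated to $\sigma$ via $R_{\alpha_p}$, yields a set $A_{\alpha_p} \subset \Sigma$ with $\mu(A_{\alpha_p}) = 1$ such that formula \eqref{B_alpha} holds for $\alpha = \alpha_p$ and $x = R_{\alpha_p}^{-1}(\xi)$ whenever $\xi \in A_{\alpha_p}$. Likewise, for $\alpha = 0$ we already have the set $A_0 \subset \Sigma$ with $\mu(A_0) = 1$ on which the formula holds for $x = R^{-1}(\xi)$. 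Now define
\begin{equation*}
A \;=\; A_0 \cap \bigcap_{p=1}^{\infty} A_{\alpha_p}.
\end{equation*}

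Since $\mu$ is a probability measure and $A$ is a countable intersection of sets of full $\mu$-measure, we get $\mu(\Sigma \setminus A) \leq \mu(\Sigma \setminus A_0) + \sum_{p=1}^\infty \mu(\Sigma \setminus A_{\alpha_p}) = 0$, hence $\mu(A) = 1$. By construction, every $\xi \in A$ simultaneously lies in $A_0$ and in each $A_{\alpha_p}$, so the Oseledets formula \eqref{B_alpha} is valid for this $\xi$ at $\alpha = 0$ with $x = R^{-1}(\xi)$ and at every $\alpha = \alpha_p$ with $x = R_{\alpha_p}^{-1}(\xi)$. This is exactly the conclusion of the lemma.

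There is no real obstacle: the argument is the standard ``countable exceptional set is still null'' trick. The only point worth flagging is that the conjugacy $R_\alpha$ varies with $\alpha$, so one must be careful that the common full-measure set lives in $\Sigma$ (not in some $M_\alpha$), and the selected point in $M_{\alpha_p}$ is then produced as $R_{\alpha_p}^{-1}(\xi)$ for each $p$ separately. This is naturally the case here because the shift $\sigma$ and the measure $\mu$ on $\Sigma$ are the same object for every $\alpha$, so the family of Oseledets-full-measure sets all live in the common ambient space $\Sigma$, which makes the intersection meaningful.
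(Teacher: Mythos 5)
Your argument is correct and is exactly the intended one: the paper itself only writes ``See \cite{Amal}'' for this lemma, but the setup immediately preceding it (the sets $A_\alpha \subset \Sigma$ with $\mu(A_\alpha)=1$) makes clear that the proof is precisely your countable intersection of full-measure sets in the common symbol space $\Sigma$. Your remark that the intersection is meaningful only because $\Sigma$, $\sigma$, and $\mu$ are independent of $\alpha$ while the conjugacies $R_\alpha$ vary is the one genuinely important point, and you have it right.
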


\begin{proof}
See \cite{Amal}.
\end{proof}

\bigskip 
Thus, using the notation $x(0,\alpha) \in M_\alpha$, we can choose $\xi \in \Sigma$ so that 
formula (\ref{B_alpha}) applies for $\alpha = \alpha_{p}$ and $x = x (0,\alpha_{p})$ for all ${p} = 1, 2, \ldots$, and also for $\alpha = 0$
and $x = (0,0)$.

\bigskip

From the formula for the largest Lyapunov exponent (\ref{g}), we can write the Lyapunov exponents for $K(\alpha)$ and $K(0)$ as follows: 

\begin{equation*}
\begin{aligned}
\lambda_1 (\alpha)&=\lim_{m\to \infty}  \frac{1}{m} \sum_{j=1}^{m} \log {\Big(1+d_j (\alpha) \ell_j (\alpha)\Big)}= \lim_{m\to \infty} \lambda^{(m)}_1 (\alpha), \\
 \lambda_1 (0)&=  \lim_{m\to \infty}  \frac{1}{m} \sum_{j=1}^{m} \log {\Big(1+d_j (0) \ell_j (0)\Big)}= \lim_{m\to \infty} \lambda^{(m)}_1 (0),
\end{aligned}
\end{equation*}

\noindent where

\begin{equation}
\begin{aligned}
\lambda^{(m)}_1 (\alpha)=  \frac{1}{m} \sum_{j=1}^{m} \log {\Big(1+d_j (\alpha) \ell_j (\alpha)\Big)}  \\
{\rm{and}} \, \, \,\, \,  
\lambda^{(m)}_1 (0)=   \frac{1}{m} \sum_{j=1}^{m} \log {\Big(1+d_j (0) \ell_j (0)\Big)}.\\
\label{lambda_k}
\end{aligned}
\end{equation}

\bigskip

\section{{\large{The continuity of the largest Lyapunov exponent for non-planar billiard deformation}}}

\label{ContL}

This section provides a rigorous proof that the largest Lyapunov exponent of the billiard deformation is a continuously varying function with respect to the perturbation parameter $\alpha$ in $\mathbb{R}^n$ with $n \geq 3$. Our proof closely follows the argument used in the proof of the continuity of the largest Lyapunov exponent for planar billiard deformation \cite{Amal}, as well as the results established in Section \ref{derivative} and previous sections.

\bigskip 

Here we prove Theorem \ref{cont2.}:








\proofn[of Theorem \ref{cont2.} ]
We will prove that $\lambda_1(\alpha)$ is continuous at $\alpha = 0$. From this continuity at every $\alpha \in [0, b]$ follows. To prove that, we will show that $\lambda^m_1(\alpha_p) \to \lambda^m_1(0)$ for any sequence of points  $\alpha_p \in [0,b]$ as in Lemma \ref{alpha_tilde{k}} with $\alpha_p \to 0$ when $p \to \infty$.
From (\ref{lambda_k}), we have



\begin{align*}
 \lambda_1 ^ m (\alpha_p) - \lambda_1 ^ m (0)   &= 
 - \frac{1}{m} \sum_{i=1}^{m} \Big [ \log (1+ d_i(\alpha_p) \ell_i(\alpha_p)) - \log(1+ d_i(0) \ell_i(0)) \Big ]. 
%
\end{align*}

\noindent Therefore, 

\begin{align*}%
\Bigg | \lambda_1 ^ m (\alpha_p) - \lambda_1 ^ m (0) \Bigg | &\leq  \frac{1}{m} \sum_{i=1}^{m}  \Bigg |  \frac {1+ d_i(\alpha_p) \ell_i(\alpha_p)- (1+ d_i (0) \ell_i(0)) } {1+ \min\{d_i (\alpha_p) \ell_i (\alpha_p), d_i (0) \ell_i (0)\}} \Bigg | \\
&=  \frac{1}{m} \sum_{i=1}^{m}  \Bigg |  \frac { d_i(\alpha_p) \ell_i(\alpha_p)-  d_i(0)  \ell_i (0)} {1+ d_{\min}  \mu_{\min} } \Bigg | \\
&=  \frac{1}{m} \, \mathcal{D}_{\min} \sum_{i=1}^{m}  \Bigg | d_i(\alpha_p) \ell_i(\alpha_p)- d_i (0) \ell_i (0)\Bigg | \\
&=  \frac{1}{m} \, \mathcal{D}_{\min} \sum_{i=1}^{m}  \Bigg | \Big (d_i(\alpha_p) - d_i (0) \Big ) \ell_i(\alpha_p ) + d_i(0) \Big (\ell_i (\alpha_p ) - \ell_i(0) \Big ) \Bigg |, 
\end{align*}

\bigskip

\noindent where $\mathcal{D}_{\min} = \frac{1}{1+d_{min} \mu_{min}} >0$ is a global constant independent of $\alpha_p$. 

\bigskip

\noindent From the Mean Value Theorem,  we have $| \ell _i (\alpha_p) - \ell_i(0)| = \alpha_p | \dot{\ell}_i(s(\alpha_p))|$ and $| d_i (\alpha) - d_i(0)| = \alpha_p |\dot{d}_i(t(\alpha_p))|$, where $\dot{\ell}_i$ and $\dot{d}_i$ are the derivatives of $\ell_i$ and $d_i$ with respect to $\alpha_p$, for some $s(\alpha_p), t(\alpha_p) \in [0,\alpha_p] $. From Corollary \ref{Dd}, there exists a constant  and $C^{(1)}_d$ such that $|\dot{d}_i(s(\alpha))|  \leq C^{(1)}_d$. And from Corollary \ref{Dell}, there exists a constant $C^{(1)}_{\ell_i}$ such that $|\dot{\ell}_i(s(\alpha))|  \leq C^{(1)}_{\ell}$ independent of $i$. 
Therefore for all $i$, $| \ell_i (\alpha_p) - \ell_i(0)|  \leq  \alpha_p  C^{(1)}_{\ell}$, and $| d_i (\alpha_p) - d_i(0)| \leq \alpha_p C^{(1)}_d  $. Then

\begin{equation*}
\begin{aligned}
 \Big| \lambda_1 ^ m (\alpha_p) - \lambda_1 ^ m (0) \Big |  &\leq \frac{1}{m} \, \mathcal{D}_{\min} \sum_{i=0}^{m-1}  \Big(  \Big| {d_i (\alpha_p) - d_i (0)\Big|} k_i (\alpha_p) + d_i (0)  \Big| {\ell_i (\alpha_p) - \ell_i(0)}   \Big| \Big) \\
&\leq  \frac{1}{m} \alpha_p \, \mathcal{D}_{\min} \sum_{i=1}^{m} (C^{(1)}_d \eta_{\max} + C^{(1)}_\ell d_{\max} ) \\
&= \alpha_p \mathcal{D}_{\min} \Big  (C^{(1)}_d \eta_{\max} + C^{(1)}_\ell d_{\max} \Big )\\
\end{aligned}
\end{equation*}

\bigskip

\noindent By letting $\alpha_p$ approach $0$ as $p$ approaches infinity, and then letting $m$ approach infinity, we get  $\lambda_1 (\alpha_p)$ approaching $\lambda_1  (0)$. This proves the statement.\qed

\bigskip


\section{{\large{The differentiability of the largest Lyapunov exponent for the non-planar billiard deformation}}}

\label{DiffL}

 
This section presents a rigorous proof that the largest Lyapunov exponent is differentiable with respect to a small perturbation $\alpha$ in $\mathbb{R}^n$ with $n \geq 3$. The proof is very similar to the proof in \cite{Amal} for the case of planar billiard deformation, and builds on the results established in Section \ref{derivative} and previous sections. Our results demonstrate that the largest Lyapunov exponent of the billiard deformation is a differentiable function of the perturbation parameter $\alpha$ .


\bigskip 

Here is the proof of  Theorem \ref{diff2.}. We will use the same argument in the proof of the differentiability of the largest Lyapunov exponent in the planar open billiards in \cite{Amal}.

\bigskip

\proofn[of Theorem \ref{diff2.}]
We will prove differentiability at $\alpha=0$. From this differentiability at any $\alpha \in [0, b]$ follows. To prove the differentiability at $\alpha =0$, we have to show that there exists 
\begin{equation*}
\begin{aligned}
\lim_{\alpha \to 0}\frac {\lambda_1 (\alpha) - \lambda_1 (0) } {\alpha}.
\end{aligned}
\end{equation*}

\noindent Equivalently, there exists a number $F$ such that 

\begin{equation*}
\begin{aligned}
\lim_{p \to \infty}\frac {\lambda_1 (\alpha_p) - \lambda_1 (0) } {\alpha_p} = F,
\end{aligned}
\end{equation*}

\noindent for any sequence $\alpha_1 > \alpha_2 > ... > \alpha_{p} > ... \to 0$ as $p \to \infty $ in $[0, b]$ as in Lemma \ref{alpha_tilde{k}}.

 \bigskip 
 

 \noindent By using Lemma \ref{alpha_tilde{k}} and the expressions of $\lambda_1^{m}(\alpha)$ for $\alpha = \alpha_p$ and $\lambda_1^{m}(0)$ in (\ref{lambda_k}), we have $ \lambda_1 ^ {(m)} (\alpha_p) \to \lambda_1  (\alpha_p)$ and $ \lambda_1 ^ {(m)} (0) \to \lambda_1  (0)$ when $m \to \infty$. 

 %
 %

\noindent Set $f_j (\alpha_p) = \log \big (1+d_j (\alpha_p) \ell_j(\alpha_p) \big)$ and $f_j (0) = \log \big (1+d_j (0) \ell_j(0) \big)$. As in \cite{Amal}, we derive

\begin{equation*}
\begin{aligned}
 \frac{ \lambda_1 ^ {(m)} (\alpha_p) - \lambda_1 ^ {(m)} (0)}{\alpha_p} &= - \frac{1}{m} \sum_{j=1}^{m}\frac{ f_j (\alpha_p)- f_j (0)}{\alpha_p}.  \\
\end{aligned}
\end{equation*}

\noindent And Taylor's formula gives

\begin{equation*}
\begin{aligned}
f_j(\alpha_p) &= f_j(0) + \alpha_p \dot{f}_j (0) + \frac{\alpha_p^2}{2} \ddot{f}_j(r_j(\alpha_p))
\end{aligned}
\end{equation*}
 
 \noindent for some $r_j(\alpha_p) \in [0, \alpha_p]$. Then 
 \begin{equation*}
\begin{aligned}
\frac{f_j(\alpha_p) - f_j(0)}{\alpha_p} - \dot{f}_j (0) &= \frac{\alpha_p}{2} \ddot{f}_j(r_j(\alpha_p)).
\end{aligned}
\end{equation*}
 
\noindent Let \, $$ F_m = \frac{1}{m} \sum_{j=1}^{m} \dot{f}_j(0).$$ 

\noindent Summing up the above for $j=1,2,..., m$, we get

\begin{equation*}
\begin{aligned}
\frac{ \lambda_1 ^ {(m)} (\alpha_p) - \lambda_1 ^ {(m)} (0)}{\alpha_p}  - F_m &= - \frac{1}{m} \sum_{j=1}^{m} \Big[\frac{ f_j (\alpha_p)- f_j (0)}{\alpha_p} - \dot{f}_j (0) \Big]. \\
\end{aligned}
\end{equation*}

\noindent From the definition of $f_j(\alpha_p)$, 

\begin{equation*}
\begin{aligned}
\dot{f}_j(\alpha_p) &= \frac{\dot{d}_j(\alpha_p) \ell_j(\alpha_p) + d_j(\alpha_p) \dot{\ell}_j(\alpha_p) } {1+d_j(\alpha_p) \ell_j(\alpha_p) }, \\
\end{aligned}
\end{equation*} 

\noindent and therefore, 

\begin{equation*}
\begin{aligned}
\ddot{f}_j(\alpha_p) & = \frac{ \big(\ddot{d}_j(\alpha_p) \ell_j(\alpha_p) + 2 \dot{d}_j(\alpha_p) \dot{\ell}_j(\alpha_p) +d_j(\alpha_p) \ddot{\ell}_j(\alpha_p)\big) \big( 1+d_j(\alpha_p) \ell_j(\alpha_p) \big) } {\big(1+d_j(\alpha_p) \ell_j(\alpha_p) \big)^2}\\
&  - \frac{\big(\dot{d}_j(\alpha_p) \ell(\alpha_p) + d_j(\alpha_p) \dot{\ell}_j(\alpha_p)\big)^2 } {\big(1+d_j(\alpha_p) \ell_j(\alpha_p) \big)^2}.
\end{aligned}
\end{equation*} 

\noindent  From Corollaries \ref{Dd} and \ref{Dell}, we get 
 
\begin{equation*}
\begin{aligned}
\Big |\dot{f}_j(\alpha_p) \Big| & \leq \frac{C_d^{(1)} \eta_{\max} + d_{\max} C_\ell^{(1)} } {1+d_{\min} \mu_{\min} } = C^{(1)}_{f}, \\
\\
\Big |\ddot{f}_j(\alpha_p) \Big| & \leq  \frac{ \big(C^{(2)}_d \eta_{\max} + 2 C_d^{(1)} C^{(1)} _\ell +d_{\max} C^{(2)}_\ell \big) \big( 1+d_{\max} \eta_{\max} \big) } {\big(1+d_{\min} \mu_{\min} \big)^2} \\
&+ \frac {\big(C^{(1)}_d \eta_{\max} + d_{\max} C^{(1)}_\ell \big)^2 } {\big(1+d_{\min} \mu_{\min} \big)^2} = C^{(2)}_f. 
\end{aligned}
\end{equation*} 

\noindent Therefore $ | \ddot{f} _j(r_j(\alpha_p)) | \leq C^{(2)}_f , $ which implies 

\noindent 

\begin{equation*}
\begin{aligned}
\Big| \frac{ \lambda_1 ^ {(m)} (\alpha_p) - \lambda_1 ^ {(m)} (0)}{\alpha_p}  - F_m \Big| & \leq  \frac{1}{m} \sum_{j=1}^{m}\frac{\alpha_p }{2} \Big| \ddot{f}_j (t_j(\alpha_p))\Big|  
&\leq \frac{C^{(2)}_f}{2} \alpha_p.
\end{aligned}
\end{equation*}


\noindent Now $|\dot{f}_j(\alpha_p)| \leq C^{(1)}_f$, gives $|F_m| \leq \frac{1}{m} \sum_{j=1}^{m} |\dot{f}_j(0)| \leq C^{(1)}_f$, for all $m$. So, the sequence $\{F_m\}$ has convergent subsequences. Let for example $ F_{m_h} \to {F}$, for some sub-sequence $\{m_h\}$. Then 

\begin{equation*}
\begin{aligned}
\Big | \frac{ \lambda^{(m_h)}_1  (\alpha_p) - \lambda^{(m_h)}_1  (0)}{\alpha_p}  - F_{m_h} \Big | &\leq \frac{C^{(2)}_f}{2} \alpha_p, 
\end{aligned}
\end{equation*}

\noindent for all $h\geq 1$. So, letting $h \to \infty $, and letting $ \alpha_p \to 0$ as $p \to \infty$ we get that there exists
 and letting $ \alpha_p \to 0$ as $p \to \infty$ we get that there exists 
\begin{equation*}
\begin{aligned}
\lim_{p \to \infty} \frac{ \lambda_1  (\alpha_p) - \lambda_1  (0)}{\alpha_p}  = F.  
\end{aligned}
\end{equation*} 

\noindent for every sequence $\alpha_1 > \alpha_2 > ... > \alpha_{p} > ... \to 0$ as $p \to \infty $ in $[0, b]$ as in Lemma \ref{alpha_tilde{k}}. Thus, there exists $ F = \lim_{m \to \infty} \frac{1}{m} \sum_{j=1}^{m} \dot{f}_j(0)$. This is true for every subsequence $\{m_h\}$, so for any subsequence we have $ F_{m_h} \to F$. Hence, $ F_{m}$ converges to $F$ as well. This implies that there exists

\begin{equation*}
\begin{aligned}
\lim_{\alpha \to 0} \frac{ \lambda_1  (\alpha) - \lambda_1  (0)}{\alpha}  = F,  
\end{aligned}
\end{equation*}

\noindent so $\lambda_1$ is differentiable at $\alpha =0$ and $\dot{\lambda}_1(0) =F$. This establishes the statement.\qed


\section*{Acknowledgements}{Prof. Luchezar Stoyanov is gratefully acknowledged by the author for his insightful advice, helpful criticism, and assistance. This work was supported by a scholarship from Najran University, Saudi Arabia.}




\end{document}